\newtheorem{corollary}{Corollary}[section]
\newtheorem{lemma}[corollary]{Lemma}
\newtheorem{remark}[corollary]{Remark}
\newtheorem{theorem}[corollary]{Theorem}
\newfont{\sBlackboard}{msbm10 scaled 900}
\newcommand{\mylabel}[1]{\label{#1}
            \ifx\undefined\stillediting
            \else \fbox{$#1$}\fi }
\newcommand{\BE}{\begin{equation}}
\newcommand{\EEQ}{\end{equation}}
\newcommand{\rfb}[1]{\mbox{\rm
   (\ref{#1})}\ifx\undefined\stillediting\else:\fbox{$#1$}\fi}
\newfont{\Blackboard}{msbm10 scaled 1200}
\newfont{\roma}{cmr10 scaled 1200}
\def\CC{\rm \hbox{C\kern-.56em\raise.4ex
         \hbox{$\scriptscriptstyle |$}\kern+0.5 em }}
\def\L{\Lambda}
\def\R{\mathbb{R}}
\def\N{\mathbb{N}}
\def\LG{{\textbf {\textit L}}}
\newcommand{\mm}    {{\hbox{\hskip 0.5pt}}}
\newcommand{\bluff} {{\hbox{\raise 15pt \hbox{\mm}}}}
\renewcommand{\L}    {{\Lambda}}
\def\section{\@startsection {section}{1}{\z@}{-3.5ex plus -1ex minus
    -.2ex}{2.3ex plus .2ex}{\large\bf}}
\def\be{\begin{equation}}
\def\ee{\end{equation}}
\def\ds{\displaystyle}
\begin{document} 
\thispagestyle{empty} 
\title{Resolvent estimates for wave operators in Lipschitz domains}
\author{Ka\"is AMMARI}
\address{UR Analysis and Control of PDEs, UR 13E64, Department of Mathematics,  
Faculty of Sciences of Monastir, University of Monastir, 5019 Monastir, Tunisia}
\email{kais.ammari@fsm.rnu.tn}  
\author{Ch\'erif AMROUCHE}
\address{Laboratoire de Math\'ematiques et de leurs applications,
UMR CNRS 5142
B\^atiment IPRA - Universit\'e de Pau et des Pays de l'Adour
Avenue de l'Universit\'e - BP 1155
64013 Pau Cedex, France}
\email{cherif.amrouche(@)univ-pau.fr}

\begin{abstract}
In this paper we study the resolvent of wave operators on open and bounded Lipschitz domains of $\mathbb{R}^N$ with Dirichlet or Neumann boundary conditions. We give results on existence and estimates of the resolvent for the real and complex cases. 
\end{abstract}

\subjclass[2010]{35J05}
\keywords{resolvent estimates, wave operators, Laplace equation}

\maketitle

\tableofcontents

%\vfill\break

\section{Introduction}  \label{intro}

Let $\Omega \subset \mathbb{R}^N,$ with $ N \geq2,$ be a non-empty bounded open Lipschitz domain with boundary $\Gamma$. We consider here the following equations: 
\begin{equation}\label{PD}
(\mathcal{P}_D)\ \ \ \  \lambda^2 u - \Delta u = f\quad \ \mbox{in}\ \Omega \quad
\mbox{and } \quad u = g \ \ \mbox{on }\Gamma,
\end{equation}
and
\begin{equation}\label{PN}
(\mathcal{P}_N)\ \ \ \ \lambda^2 u - \Delta u = f\quad \ \mbox{in}\ \Omega \quad
\mbox{and } \quad \frac{\partial u}{\partial\textit{\textbf n}} = h \ \ \mbox{on }\Gamma,
\end{equation}
where $\lambda \in \mathbb{C}$, $\textit{\textbf n}$ is the unit normal of $\Gamma$ pointing towards the exterior of $\Omega$, with data in some Sobolev spaces. When $g = 0$ (respectively $h = 0$), we denote this problem by $(\mathcal{P}_D^0)$ (respectively $(\mathcal{P}_N^0))$ but when $f = 0$, we denote this problem by $(\mathcal{P}_D^H)$ (respectively $(\mathcal{P}_N^H))$. 

\medskip

These equations can be interpreted as a wave equations (wave equation in the frequency domain) and appear naturally from general conservation laws of physics. The wave equation can also be derived from the telegraph equation, problems associated with steady-state oscillations (mechanical, acoustic, electromagnetic, etc.), and the problems of diffusion of some gases in the presence of decay or chain reactions and other wave-type, or evolutionary equations. 

\medskip

In this paper, we study the existence, the regularity of the solutions of (\ref{PD}) and (\ref{PN}) in function of $\lambda$ real or complex (Re $\lambda > 0$) and when $\Omega$ is an open bounded Lipschitz domain. We note, in particular, that in this paper
we consider weaker hypotheses than those considered by Jerison and Kenig for the Poisson problem on Lipschitz domain \cite{Jer1, Jer2, J-K} ($f \in L^2 (\Omega)$), for the existence of a solution $u \in H^{3/2}(\Omega)$ and we obtain optimal estimates in $\lambda$. 

The originality of our work lies in the optimal estimations of the resolvent, which we do not find, to our knowledge, anywhere else.

\medskip

We note that we can obtain analogues results for Helmholtz equation with real frequency, {\it i.e} $\lambda$ is purely complex. Indeed, by applying the results of the present paper with a complex frequency, that is an order-one perturbation of the real frequency, and then solving away the error from this perturbation by using bounds on the $L^2$ norm of the solution in terms of the data, as obtained for scattering problems, by Morawetz \cite{morawetz}, Vainberg \cite{vainberg}, Melrose-Sj\"ostrand \cite{MS} (see also \cite{MS} and references therein), Morawetz-Ralson-Strauss \cite{MRS}.

\medskip

The paper is organized as follows: in Section \ref{sec2}, we give some results of existence, regularity and wavenumber-explicit bounds of corresponding maps associated to the Laplace ($\lambda = 0$) and the resolvent of the wave operators ($\lambda\in \mathbb{R}^*$). In Section \ref{sec3}, we treat (existence, regularity and wavenumber-explicit dependance) the complex case ($\lambda \in \mathbb{C}$ and Re $\lambda > 0$).

\section{The Resolvent estimates. Real Case} \label{sec2}
\setcounter{equation}{0}

In this section we study the existence and regularity of solutions of $(\mathcal{P}_D), (\mathcal{P}_D^H), (\mathcal{P}_N)$ and 
$(\mathcal{P}_N^H)$  for $\lambda \in \mathbb{R}.$

\subsection{Case of the Laplace equation: $\lambda = 0$}
 
These problems when $\lambda = 0$ were studied by many authors (see \cite{Jer1, Jer2, J-K, Necas}) and now  we  start  by recalling some results here. The first result is devoted to the famous Ne\v{c}as property, which plays an important role in the analysis of elliptic problems and in particular in the study of the Steklov-Poincar\'e operator. 

\begin{theorem}\label{Necas Property}$\mathrm{(}${\bf Ne\v{c}as Property}$\mathrm {)}$ Let 
$$
u\in H^1(\Omega) \quad with \quad   \Delta u\in L^2(\Omega).
$$
i) Then we have the following property: 
\begin{equation}\label{Necasproperty}
u_{\vert\Gamma} \in H^1(\Gamma) \quad \Longleftrightarrow\quad \frac{\partial u}{\partial \textbf{\textit n}}\in L^2(\Gamma) \quad \Longleftrightarrow \quad \nabla u \in (L^2(\Gamma))^N,
\end{equation}
with the following estimates:
\begin{equation*}\label{d02-100118-e2}
\ds\left|\left|\frac{\partial u}{\partial \textit{\textbf n}} \right|\right|_{ \textit{L}^2(\Gamma)}\ \leq\ C\left(\inf_{k\in \R}\left|\left |u + k \right |\right |_{H^1(\Gamma)} +\left|\left| \Delta u \right|\right|_{L^2(\Omega)}\right)
\end{equation*}
and 
\begin{equation*}\label{d02-100118-e3}
\ds \inf_{k\in \R}\left|\left|u + k \right|\right|_{H^1(\Gamma)}  \ \leq\ C\left( \left|\left|\frac{\partial u}{\partial \textit{\textbf n}} \right|\right|_{ \textit{ L}^2(\Gamma)} +\left|\left| \Delta u \right|\right|_{L^2(\Omega)}\right).
\end{equation*}
ii) Moreover if one condition in \eqref{Necasproperty} is satisfied, then $u\in H^{3/2}(\Omega)$ and $\sqrt \rho\,\nabla^2 u \in (L^2(\Omega))^{N\times N}$, where $d$ is the distance to $\Gamma$.\\ 
iii) When $u$ is harmonic, the Steklov-Poincar\'e operator $S: u\mapsto  \frac{\partial u}{\partial \textbf{\textit n}}$ satisfies:
\begin{equation*}\label{d07-150318-e1}
S:\  H^1(\Gamma)\  \longrightarrow\  L^2(\Gamma)  \quad \mathrm{and}\quad S:\ H^1(\Gamma)/\R\  \longrightarrow\  L^2_0(\Gamma) 
\end{equation*}
are continuous. The second operator above is one to one and onto. Here  $L^2_0(\Gamma)$ denotes the space $ \left\{ \varphi \in L^2(\Gamma); \, \int_\Gamma \varphi   = 0\right\}$.\\
iv) Since $S$ is self adjoint, we get by duality the following properties: the following operators
\begin{equation*}\label{d07-150318-e1dual}
S:\  L^2(\Gamma)\  \longrightarrow\  H^{-1}(\Gamma)   \quad \mathrm{and}\quad S:\ L^2(\Gamma)/\R\  \longrightarrow\   H^{-1}(\Gamma) \, \perp \R, 
\end{equation*}
where  $H^{-1}(\Gamma) \, \perp \R =  \left\{ f \in H^{-1}(\Gamma) ; \, \langle f,\, 1\rangle  = 0\right\}$, are continuous. The second operator above is one to one and onto. 
\end{theorem}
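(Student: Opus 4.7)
The plan is to prove (i) and (ii) via a Rellich-type multiplier identity adapted to Lipschitz domains, then deduce (iii) by applying (i) to harmonic functions, and finally obtain (iv) by duality from (iii). First I would construct a Lipschitz vector field $\mathbf{h}\colon\overline{\Omega}\to\mathbb{R}^N$ with $\mathbf{h}\cdot\mathbf{n}\geq c_0>0$ almost everywhere on $\Gamma$; such a field exists because $\Omega$ is Lipschitz, using a partition of unity subordinate to boundary charts where $\Gamma$ is locally the graph of a Lipschitz function, in each of which a fixed transversal direction works. For $u$ smooth, integrating by parts the identity obtained by multiplying $\Delta u$ by $\mathbf{h}\cdot\nabla u$ yields
\[
\int_\Gamma (\mathbf{h}\cdot\mathbf{n})|\nabla u|^2\,d\sigma
= 2\int_\Gamma \frac{\partial u}{\partial \mathbf{n}}(\mathbf{h}\cdot\nabla u)\,d\sigma
- 2\int_\Omega (\Delta u)(\mathbf{h}\cdot\nabla u)\,dx
+\int_\Omega\bigl[(\operatorname{div}\mathbf{h})|\nabla u|^2 - 2(\nabla u)^T (D\mathbf{h})\nabla u\bigr]\,dx.
\]
Decomposing $\mathbf{h}\cdot\nabla u = (\mathbf{h}\cdot\mathbf{n})\partial_\mathbf{n}u + \mathbf{h}_\tau\cdot\nabla_\tau u$ and using $|\nabla u|^2 = |\partial_\mathbf{n}u|^2+|\nabla_\tau u|^2$, a Young inequality produces the two-sided boundary estimate controlling $\|\partial_\mathbf{n}u\|_{L^2(\Gamma)}$ by $\|\nabla_\tau u\|_{L^2(\Gamma)}$ plus interior quantities, and conversely. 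The invariance of $\nabla u$ under adding a constant to $u$ forces the $\inf_{k\in\mathbb{R}}\|u+k\|_{H^1(\Gamma)}$ on the right-hand side.

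For general $u\in H^1(\Omega)$ with $\Delta u\in L^2(\Omega)$, the identity is justified by approximating $\Omega$ from the interior by a sequence of smooth (or $C^2$) subdomains with uniform Lipschitz character, as in Verchota and Jerison--Kenig, applying the identity on each, and passing to the limit through the uniform estimates obtained above. This simultaneously establishes the equivalence of the three conditions in \eqref{Necasproperty} and the two displayed bounds. For part (ii), the $H^{3/2}(\Omega)$ regularity is the Jerison--Kenig boundary regularity result for the Dirichlet/Neumann problem on Lipschitz domains, while $\sqrt d\,\nabla^2 u \in L^2(\Omega)$ follows from the interior Caccioppoli-type bound $|\nabla^2 u(x)|\lesssim d(x)^{-1}\|\nabla u\|_{L^2(B(x,d(x)/2))} + \|\Delta u\|_{L^2(B(x,d(x)/2))}$, whose square against $d$ is integrable, combined with the boundary control just obtained.

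For (iii), given $v\in H^1(\Gamma)$, solve $\Delta u=0$ in $\Omega$, $u=v$ on $\Gamma$; part (i) then gives $\partial_\mathbf{n}u=Sv\in L^2(\Gamma)$ with the bound, so $S$ is bounded. The relation $\int_\Gamma Sv = \int_\Omega\Delta u=0$ places the range in $L^2_0(\Gamma)$, bijectivity modulo constants follows from solvability of the Neumann problem on Lipschitz domains with $L^2$ data (again Jerison--Kenig) combined with (i) applied to the Neumann solution. Part (iv) is pure duality: Green's identity
\[
\int_\Gamma v_1\, S v_2\,d\sigma = \int_\Omega \nabla u_1\cdot\nabla u_2\,dx = \int_\Gamma v_2\, S v_1\,d\sigma
\]
shows $S$ is symmetric on its natural domain, so transposing the bounded bijection $S\colon H^1(\Gamma)/\mathbb{R}\to L^2_0(\Gamma)$ yields a bounded bijection $L^2(\Gamma)/\mathbb{R}\to (H^1(\Gamma)/\mathbb{R})'\simeq H^{-1}(\Gamma)\perp\mathbb{R}$, and likewise continuity on the full spaces. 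The main obstacle is the rigorous execution of the Rellich multiplier argument on a merely Lipschitz boundary and the approximation step needed to extend the smooth identity to arbitrary $u\in H^1(\Omega)$ with $L^2$ Laplacian; everything in parts (ii)--(iv) then follows either from (i) or by standard functional-analytic duality.
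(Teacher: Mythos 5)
The paper does not actually prove Theorem \ref{Necas Property}: it is stated as a recalled classical result with references to Ne\v{c}as and Jerison--Kenig, so there is no in-paper argument to compare against. Your overall scheme --- a Rellich multiplier identity with a transversal Lipschitz vector field, justified on a merely Lipschitz boundary by exhausting $\Omega$ with smooth subdomains of uniform Lipschitz character, then (iii) from (i) plus $L^2$-solvability of the Neumann problem, and (iv) by transposing the symmetric bijection $S:H^1(\Gamma)/\R\to L^2_0(\Gamma)$ --- is the standard route, and it is the same machinery the authors themselves deploy later (the identity \eqref{Rellich} and the approximating domains $\Omega_k$ in the proofs of Theorems \ref{th1} and \ref{th2}). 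One point you leave implicit in part (i): the Rellich identity produces the interior term $\Vert\nabla u\Vert^2_{L^2(\Omega)}$ on the right, and it is only after replacing $u$ by $u+k$ and using $\Vert\nabla u\Vert^2_{L^2(\Omega)}=\int_\Gamma(u+k)\frac{\partial u}{\partial\textit{\textbf n}}-\int_\Omega(u+k)\Delta u$ with Young's inequality that this term is absorbed and the $\inf_{k\in\R}$ appears; this is routine but should be said.

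The genuine gap is in your mechanism for $\sqrt d\,\nabla^2 u\in \textit{\textbf L}^2(\Omega)$ in part (ii). Summing the interior Caccioppoli bound $\Vert\nabla^2u\Vert^2_{L^2(Q)}\lesssim d_Q^{-2}\Vert\nabla u\Vert^2_{L^2(Q^*)}+\Vert\Delta u\Vert^2_{L^2(Q^*)}$ over a Whitney decomposition weighted by $d$ reduces $\int_\Omega d\,\vert\nabla^2u\vert^2$ to $\int_\Omega d^{-1}\vert\nabla u\vert^2$ plus a harmless term, and that last integral is generically \emph{infinite}, even for harmonic functions with smooth boundary data: on the half-space, with partial Fourier transform $e^{-\vert\boldsymbol{\xi}'\vert x_N}\hat g(\boldsymbol{\xi}')$, one faces $\int_0^1 x_N^{-1}e^{-2\vert\boldsymbol{\xi}'\vert x_N}\,dx_N=\infty$. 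So "combined with the boundary control just obtained" cannot rescue a divergent intermediate bound; the weight $d$ must enter the integration by parts, not be pasted on afterwards. The correct mechanism is a second, weighted multiplier identity: pairing the equation with $-d\,\Delta u$ (exactly as the authors do in \eqref{energieavecd}) controls $\Vert\sqrt d\,\Delta u\Vert_{L^2(\Omega)}$, and one then invokes the estimate \eqref{estimabe} quoted from \cite{AMN} (equivalently, Dahlberg's square-function estimate for Lipschitz domains) to pass from $\sqrt d\,\Delta u\in L^2(\Omega)$ together with $u_{\vert\Gamma}\in H^1(\Gamma)$ to $u\in H^{3/2}(\Omega)$ and $\sqrt d\,\nabla^2u\in\textit{\textbf L}^2(\Omega)$. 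Parts (iii) and (iv) of your proposal are correct as written.
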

\medskip

The following theorem concerns the existence with optimal regularity of solutions to Problem $(\mathcal{P}_D)$, with $\lambda = 0$, see  \cite{Jer1} \cite{Jer2} \cite{J-K}.

\begin{theorem}\label{PDH, lambda=0}$\mathrm{(}${\bf Existence in} $H^s(\Omega)$  {\bf  for} $(\mathcal{P}_D)$  {\bf with} $\lambda=0 \mathrm {)}$ Let $\frac{1}{2}\leq s\leq\frac{3}{2}$ and  $\lambda = 0$. 

\noindent i) Then for any $g\in H^{s-1/2}(\Gamma)$, Problem $(\mathcal{P}_D^H)$ has a unique solution $u\in H^s(\Omega)$ with the estimate 
\begin{equation*}\label{d02-240118-e1c}
\left|\left| u\right|\right|_{H^s(\Omega)}\ \leq \ C\,\left|\left| g\right|\right|_{H^{s-1/2}(\Gamma)}.
\end{equation*}
Moreover,  if $s=1/2$, then $\sqrt \rho\,\nabla u \in (L^2(\Omega))^N,$ with the estimate
\begin{equation*}\label{d02-240118-e1cche1}
\left|\left| u\right|\right|_{H^{1/2}(\Omega)} + \left\Vert \sqrt \rho \,\nabla u \right\Vert_{\textit{\textbf L}^2(\Omega)}\ \leq \ C\,\left|\left| g\right|\right|_{L^{2}(\Gamma)}
\end{equation*}
and if $s=3/2$, then $\sqrt \rho\,\nabla^2 u \in (L^2(\Omega))^{N\times N},$ with the estimate 
\begin{equation*}\label{d02-240118-e1cex}
\left|\left| u\right|\right|_{H^{3/2}(\Omega)}+\left|\left| \sqrt \rho\,\nabla^2 u\right|\right|_{\textit{\textbf L}^2(\Omega)} \ \leq \ C\,\left|\left| g\right|\right|_{H^{1}(\Gamma)}.
\end{equation*} 
Here $\rho$ denotes the distance function to the boundary: for $x\in \Omega$, $\rho(x) = d(x; \, \Gamma)$.\\
ii) For any $f\in H^{s-2}(\Omega)$, with $\frac{1}{2} < s < \frac{3}{2}$,  Problem $(\mathcal{P}_D^0)$ has a unique solution $u\in H^s(\Omega)$ with the estimate 
\begin{equation*}\label{d02-240118-e1cb}
\left|\left| u\right|\right|_{H^s(\Omega)}\ \leq \ C\,\left|\left| f\right|\right|_{H^{s-2}(\Omega)}.
\end{equation*}
iii) If $f\in L^2(\Omega)$, then $u\in H^{3/2}(\Omega)$.
\end{theorem}

\begin{remark}\upshape This last result in Point iii) is of course not optimal. So, we will see below how to choose the RHS $f$ in order to get the solution in $H^{3/2}(\Omega)$.
\end{remark}

For that, instead to use classical Sobolev spaces, we need to consider adapted weighted Sobolev spaces. Let us introduce the following space: For $m\in \N$ and $r\in \mathbb{R}$
$$
 \ds \mathscr{Q}^m_{-r}(\Omega)\ =\ \left\{ v\in \mathcal{D}'(\Omega);\ \ \rho^{|\lambda|-r}D^\lambda v\in \LG^2(\Omega), \;    \vert\lambda\vert \leq   m\right\},
$$
which is a Hilbert space for the norm
$$
 ||u||_{\mathscr{Q}^m_{-r}(\Omega)}\ =\ \left(\sum_{ |\lambda|\leq m} ||  \rho^{|\lambda|-r}D^\lambda v ||^2_{ \LG^2(\Omega)}\right)^{1/2}.
$$
The case $r = 0$ is treated in Lions-Magenes, Definition 6.1-Chap.II. When $m = 0$, we sometimes denote the corresponding space as follows:
$$
 \mathscr{Q}^0_{-r}(\Omega) = L^2_{\rho^{-r}}(\Omega).
 $$

We now define the following space: For a real $s$ such that $s=m+\theta$ with $m\in\N$, $r\in \mathbb{R}$ and $0<\theta<1$, we set: 
$$
\mathscr{Q}^s_{-r}(\Omega)\ =\ \left[\mathscr{Q}^{m+1}_{-r}(\Omega),\mathscr{Q}^m_{-r}(\Omega)\right]_{1-\theta}.
$$
As $\mathcal{D}(\Omega)$ is dense in the space $\mathscr{Q}^m_{-r}(\Omega)$, we deduce that for any real $ r$  and for any real  $s\geq 0$,
\begin{equation}\label{a26-e1}
\mathcal{D}(\Omega)\ \textrm{ is dense in  } \mathscr{Q}^s_{-r}(\Omega).
\end{equation}
Therefore $\mathscr{Q}^s_{-r}(\Omega)$ is a normal space of distriburtions on $\Omega$ and its dual space denoted by 
$\mathscr{Q}^{-s}_{r}(\Omega)$  may be identified to a subspace of distributions on $\Omega$.
%\begin{equation}\label{a26-e2}
%\mathscr{Q}^{-s}_{1}(\Omega)\ =\ \left[\mathscr{Q}^s_{-1}(\Omega)\right]'
%\end{equation}
It may be represented  in the following form, when $s = m$ is an integer:
\begin{equation}\label{a26-e3}
 \mathscr{Q}^{-m}_{r}(\Omega)\ =\ \left\{f=\sum_{|\lambda|\leq m}D^\lambda\left(\rho^{|\lambda|-r}f_\lambda \right);\ \ f_\lambda\in L^2(\Omega) \right\}.
\end{equation}

\begin{remark}\upshape
Recall that for any non negative integer $m$, we have $\mathscr{Q}^m_{-m}(\Omega) = H^m_0(\Omega)$, since
$$
\ds v\in H^m_0(\Omega)\ \Longleftrightarrow \rho^{\vert\lambda\vert - m} D^{\vert\lambda\vert}v\in \LG^2(\Omega) \quad\mathrm{for \, all }\quad \vert\lambda\vert \leq m.
$$
In particular for $m = 2$, we have the following equivalence norms:
%Let us introduce now, for any integer $m\geq 2$,  the following Hilbert space 
%$$
% \ds \mathscr{Q}^m_{-2}(\Omega)\ =\ \left\{ v\in \mathcal{D}'(\Omega);\ \ \rho^{|\lambda|-2}D^\lambda v\in \L^2(\Omega), \;    \vert\lambda\vert \leq m  \right\}. 
%$$
%equipped with the norm
%$$
% ||u||_{\mathscr{Q}^m_{-2}(\Omega)}\ =\ \left(\sum_{ |\lambda|\leq m} ||  \rho^{|\lambda|-2}D^\lambda v ||_{ \L^2(\Omega)}\right)^{1/2}.
%$$
%As above, we can prove that the space $\mathcal{D}(\Omega)$ is dense in $\mathscr{Q}^m_{-2}(\Omega)$. We also have $
%\mathscr{Q}^2_{-2}(\Omega) = H^2_0(\Omega)$ 
%Moreover, equipped with the graph norm, the space $\mathscr{Q}^2_{-1}(\Omega) $ (resp. $\mathscr{Q}^2_{-2}(\Omega) $) is an Hilbert space and the norm $\Vert \cdot \Vert_{E_{1}}$ (rep. $\Vert \cdot \Vert_{E_{0}}$) is equivalent to the norm  $\Vert \cdot \Vert_{\mathscr{Q}^2(\Omega)}$ (resp.  $\Vert \cdot \Vert_{H^2_0(\Omega)}$). 
for any  $v\in H^2_{0}(\Omega) $, 
\begin{equation}\label{equivnormH20}
\Vert  v  \Vert_{H^2_{0}(\Omega) } \simeq \Vert  v  \Vert_{\mathscr{Q}^2_{-2}(\Omega) }\simeq \Vert\Delta v  \Vert_{L^2(\Omega)}.
\end{equation}
For $m = 1$ and  $v\in \mathscr{Q}^2_{-1}(\Omega)$, we have the following estimate
\begin{equation}\label{inegrhogradient2}
\Vert  v  \Vert_{\mathscr{Q}^2_{-1}(\Omega)} \leq  C \left(\Vert \rho \Delta v  \Vert_{L^2(\Omega)} + \Vert \frac{v}{\rho} \Vert_{L^2(\Omega)} \right).
\end{equation}
\end{remark}

Before stating the first result on weighted spaces, we recall some specific properties of Lipschitz domains in $\mathbb{R}^N$ needed in the sequel. The first one is given in Theorem 2.2 of \cite{ACM} (see also \cite{BZ} and \cite{BF}): there exist subdomains $\Omega_k$, $k\ge 1$ such that
$$
 \Omega = \bigcup_{k\geq 1}\Omega_k,\quad \mathrm{with}\quad \Omega_k \; \mathrm{of\, \, class} \; \; \mathcal{C}^{\infty},\quad\overline{ \Omega_k} \subset  \Omega_{k+1}
$$
and where 

$$
\Omega_k  = \left\{x\in \Omega;\; \rho^\star(x)  > \frac{1}{k}\right\}.
$$
The function $\rho^\star$ is the regularized signed distance to $\Gamma$, which satisfies:
\begin{equation}\label{rostar}
\forall x\in \Omega, \quad C_1 \rho (x) \leq \rho^\star (x) \leq C_2 \rho (x).
\end{equation}
Observe that the distance function $\rho_k$ to the boundary $\Gamma_k$ of $\Omega_k$ is given here by $\rho^\star - 1/k$. The second one is given in  Lemma 3.1, Chapter 6 in \cite{Necas} :  there exists a function $\sigma$ belonging to $\mathcal{C}^\infty(\Omega) \cap \mathcal{C}^{0,1}(\overline{\Omega})$ and such that for any $x\in \Omega $
\begin{equation}\label{sigma}
C_1 \rho(x) \leq \sigma (x) \leq C_2 \rho (x) \quad \mathrm{and}\quad \vert D^\lambda \sigma \vert \leq C \sigma^{1 - \vert  \lambda\vert} \quad \mathrm{for\, any\, multi}-\mathrm{index}\; \lambda.
\end{equation}
In the following we will use one or other of the functions $\rho, \rho^\star$ or $\sigma$ alternately, depending on our needs.

\begin{theorem}\label{isoL^2rho}
For any $f\in L^2_{\rho}(\Omega)$, there exists a unique solution $u\in \mathscr{Q}^2_{-1}(\Omega)$ satisfying $\Delta u = f$ in $\Omega$. Moreover
\begin{equation}
\Vert  u  \Vert_{\mathscr{Q}^2_{-1}(\Omega)} \leq  C\Vert \rho f  \Vert_{L^2(\Omega)}. 
\end{equation}
\end{theorem}

\begin{proof}
Since $L^2_{\rho}(\Omega)$ is included in $H^{-1}(\Omega)$, we know that there exists a unique solution $u\in H^1_0(\Omega)$ satisfying $\Delta u = f$ in $\Omega$ and
\begin{equation}
\Vert  u  \Vert_{H^1_0(\Omega)}\leq C \Vert  f  \Vert_{H^{-1}(\Omega)}  \leq  C\Vert \rho f  \Vert_{L^2(\Omega)}. 
\end{equation}
%We know that there exists a function $\sigma$ belonging to $\mathcal{C}^\infty(\Omega) \cap \mathcal{C}^{0,1}(\Omega)$ and such that for any $x\in \Omega $
%$$
%C_1 \rho(x) \leq \sigma (x) \leq C_2 \rho (x),
%$$
%(see Lemma 3.1, Chapter 6 in \cite{Necas}).
Writing now
$$
\Delta \left(\sigma\frac{\partial u}{\partial x_j}\right) = \sigma \frac{\partial f}{\partial x_j} + 2\nabla \sigma \cdot \nabla \frac{\partial u}{\partial x_j} + \frac{\partial u}{\partial x_j}\Delta \sigma,
$$
we observe that each term in the RHS belongs to $H^{-1}(\Omega)$. For instance, we have for any $\varphi \in \mathcal{D}(\Omega)$
$$
\left| \langle \frac{\partial u}{\partial x_j}\Delta \sigma, \, \varphi\rangle\right| = \left| \int_\Omega \frac{\partial u}{\partial x_j} \varphi\Delta \sigma \right| \leq C \int_\Omega \left|\frac{\partial u}{\partial x_j} \right|  \left| \frac{\varphi}{\sigma}\right| \leq C \Vert \rho f  \Vert_{L^2(\Omega)} \left\|\frac{\varphi}{\rho}\right\|_{L^2(\Omega)},
$$
where the constant $C$ depends only on the Lipschitz character of $\Omega$.
At this point, since $\Omega$ is only Lipschitz and $\sigma\frac{\partial u}{\partial x_j}$ belongs only to $L^2(\Omega)$, we are not able to deduce that $\sigma\frac{\partial u}{\partial x_j}$ is also an element of $H^1_0(\Omega)$ (see Remark \ref{L2pasH1} below). To get around this difficulty, we will approach the domain $\Omega$ by taking the sequence of regular open sets $\Omega_k$ (see above).  Let $u_k\in H^2(\Omega_k)\cap H^1_0(\Omega_k)$ be the unique solution satisfying $\Delta u_k = f$ in $\Omega_k$ with the estimate
\begin{equation}
\Vert\nabla u_k\Vert_{L^2(\Omega_{k})} \leq C \Vert \rho_k f \Vert_{ L^2(\Omega_{k})} \leq C \Vert \rho f\Vert_{ L^2(\Omega)},
\end{equation}
where the constant $C$ depends only on the Lipschitz character of $\Omega$. Setting now $\widetilde{u_k} = u_k$ in $\Omega_k$ and  $\widetilde{u_k} = 0$ in $\Omega\setminus \Omega_k$, we deduce that the sequence $(\widetilde{u_k})_k$ is bounded in $H^1_0(\Omega).$  So $\widetilde{u_k} \rightharpoonup u^*$ in $H^1_0(\Omega)$. We shall prove that $u^* = u$. Indeed, let $\varphi \in \mathcal{D}(\Omega)$. Then supp $\varphi \subset \Omega_{k_{0}}$ for $k_0$ sufficiently large and for any $k \geq k_0$, we have
$$
\langle \Delta \widetilde{u_k}, \, \varphi\rangle = - \int_\Omega \nabla \widetilde{u_k}\cdot \nabla \varphi = \int_\Omega f\, \varphi .
$$ 
Hence 
$$
\lim_{k\rightarrow \infty} \langle \Delta \widetilde{u_k}, \, \varphi\rangle =  -  \int_\Omega \nabla u^*\cdot \nabla \varphi = \langle \Delta u^*, \, \varphi\rangle .
$$
As a consequence $\Delta u^* = f$ in $\Omega$ and then $u^* = u.$

Besides, using the estimate \eqref{inegrhogradient2}, we have 
\begin{equation}\label{estimrhograd2}
\Vert 1_{\Omega_k}\rho_k\nabla^2\widetilde{u_k})_k\Vert_{L^2(\Omega)}\leq C \Vert \rho f\Vert_{ L^2(\Omega)}.
\end{equation} 
So that for any $1\leq i, j \leq N$, we can extract a subsequence, denoted by the same way, such that
\begin{equation}\label{weakcgce}
1_{\Omega_k}\rho_k\frac{\partial^2 \widetilde{u_k}}{\partial x_i\partial x_j} \rightharpoonup z_{ij} \quad \mathrm{in}\quad L^2(\Omega).
\end{equation}
As previously, let $\varphi \in \mathcal{D}(\Omega)$ such that supp $\varphi \subset \Omega_{k_{0}}$ for $k_0$ sufficiently large. Then for any $k \geq k_0$,
\begin{equation*}
\int_\Omega 1_{\Omega_k}\rho_k\frac{\partial^2 \widetilde{u_k}}{\partial x_i\partial x_j}\, \varphi = - \int_{\Omega} \frac{\partial u_k}{\partial x_j}\, \frac{\partial}{\partial x_i}(\rho_k\varphi )\longrightarrow  - \int_\Omega \frac{\partial u}{\partial x_j}\, \frac{\partial}{\partial x_i}(\rho^\star\varphi ) = \int_\Omega \rho^\star\frac{\partial^2 u}{\partial x_i\partial x_j}\, \varphi.
\end{equation*}

That means that $ z_{ij} = \rho^\star\frac{\partial^2 u}{\partial x_i\partial x_j}$ and the desired estimate thanks to  \eqref{rostar}, \eqref{estimrhograd2} and  \eqref{weakcgce}.
\end{proof} 

\begin{remark} \label{L2pasH1} \upshape Suppose that $v\in L^2(\Omega)$ is a function which satisfies $\Delta v \in H^{-1}(\Omega)$. When the bounded domain $\Omega$ is of class $\mathcal{C}^{1,1}$, we know that $v$ has a trace in $H^{-1/2}(\Gamma)$ (see \cite{ARB}). Moreover if $v_{\vert\Gamma}\in H^{1/2}(\Gamma)$, then $v\in H^{1}(\Omega)$. In the case where $\Omega$ is only Lipschitz, this regularity result does not hold, as we can see in the following counter example.
%There is a Lipschitz domain $\Omega$, $f \in \mathcal{C}^\infty(\overline{\Omega})$ such that the solution $u$ to the nonhomogeneous Dirichlet problem $(\mathcal{P}_D)$, with $\lambda = 0$ and $g = 0$ does not belong to $H^1_0(\Omega$. 
Indeed, let us consider the following Lipschitz domain in the case $N = 2$: for $\frac{1} {2} < \alpha < 1$, 
$$
\Omega = \{(r, \theta);\; 0 < r < 1,\quad 0 < \theta < \frac {\pi} {\alpha}\}
$$
We can easily verify that the following function
$$
v(r, \theta) = (r^{-\alpha} - r^{\alpha})\mathrm{sin}(\alpha\theta)
$$
is harmonic in $\Omega$ with $v = 0$ on $\Gamma$ and $v\in W^{1, p}(\Omega)\cap L^2(\Omega)$ for any $p < \frac{2}{\alpha + 1}$. However $v\notin H^1(\Omega)$.
%Remark that when $\alpha$ is near from $\frac{1} {2} $ then $\Omega$ is close to the unit disc and $\frac{2}{\alpha + 1}$ is close to  $\frac{4}{3}$.

\end{remark}

\begin{corollary}\label{isoQ} For any $0 \leq \theta \leq 1$ and $v\in \mathscr{Q}^2_{-1-\theta}(\Omega)$,
\begin{equation}\label{inegrhothetagradient2}
\Vert  v  \Vert_{ \mathscr{Q}^2_{-1-\theta}(\Omega)} \leq  C\Vert \rho^{1-\theta} \Delta v  \Vert_{L^2(\Omega)}. 
\end{equation}
%ii) Moreover, we have the following characterizations:
%\begin{equation}
%E_\theta(\Omega) =  \left\{ v\in \widetilde{H}^{2-\theta}(\Omega);\;  \rho^\theta\nabla^2 v \in \L^2(\Omega)\right\}.
%\end{equation}
\end{corollary}

\begin{proof} From Theorem \ref{isoL^2rho}, we know on one hand that for any $v\in  \mathscr{Q}^2_{-1}(\Omega)$ and in particular for any $v\in \mathcal{D}(\Omega)$
\begin{equation}\label{inegrho1gradient2b}
\Vert  v \Vert_{L^2_{1/\rho}(\Omega)} + \Vert \nabla v \Vert_{L^2(\Omega)} + \left\| \frac{\partial^2 v}{\partial x_i\partial x_j}\  \right\|_{L^2_{\rho}(\Omega)} \leq  C\Vert  \Delta v  \Vert_{L^2_{\rho}(\Omega)}. 
\end{equation}
On the other hand for such a $v$, 
\begin{equation}\label{inegrho1gradient2b1}
\Vert  v \Vert_{L^2_{1/{\rho^2}}(\Omega)} + \Vert \nabla v \Vert_{L^2_{1/\rho}(\Omega)} + \left\| \frac{\partial^2 v}{\partial x_i\partial x_j}\  \right\|_{L^2(\Omega)} \leq  C\Vert  \Delta v  \Vert_{L^2(\Omega)}. 
\end{equation}
We deduce by interpolation that for any $v\in \mathcal{D}(\Omega)$ and for any $0 < \theta < 1$
\begin{equation}\label{inegrho1gradient2b2}
\Vert  v \Vert_{L^2_{1/{\rho^{1+\theta}}}(\Omega)} + \Vert \nabla v \Vert_{L^2_{1/\rho^{\theta}}(\Omega)} + \left\| \frac{\partial^2 v}{\partial x_i\partial x_j}\  \right\|_{L^2_{\rho^{1-\theta}}(\Omega)} \leq  C\Vert  \Delta v  \Vert_{L^2_{\rho^{1-\theta}}(\Omega)}. 
\end{equation}
and finally the required estimate in $ \mathscr{Q}^2_{-1-\theta}(\Omega)$ since $ \mathcal{D}(\Omega) $ is dense in $ \mathscr{Q}^2_{-1-\theta}(\Omega)$.

\end{proof}

\begin{remark}\upshape
In the particular case where $\theta = 1/2$,  we get from the above corollary the following estimate:
%we know that for any $v\in \mathcal{D}(\Omega)$, we have the following estimate
$$
\forall v\in \mathscr{Q}^2_{-3/2}(\Omega), \quad \Vert v \Vert_{H^{3/2}_{00}(\Omega)} + \Vert \rho^{1/2} \nabla^2 v   \Vert_{L^2(\Omega)} \leq  C\Vert \rho^{1/2} \Delta v  \Vert_{L^2(\Omega)}.
$$
So, since $\mathcal{D}(\Omega)$ is dense in the space denoted by $ \mathscr{T}^2_{-3/2}(\Omega)$, defined as follows
\begin{equation}
\mathscr{T}^2_{-3/2}(\Omega) =  \left\{ v\in H^{3/2}_0(\Omega);\;  \rho^{1/2}\nabla^2 v \in \ \LG^2(\Omega)\right\},
\end{equation}
which is an Hilbert space for his graph norm, we have obviously the following property: 
\begin{equation}\label{inegT2-3/2}
\forall v\in \mathscr{T}^2_{-3/2}(\Omega), \quad \Vert v \Vert_{H^{3/2}_{0}(\Omega)} + \Vert \rho^{1/2} \nabla^2 v   \Vert_{L^2(\Omega)} \leq  C\Vert \rho^{1/2} \Delta v  \Vert_{L^2(\Omega)}.
\end{equation}
Observe that $\mathscr{Q}^2_{-3/2}(\Omega) $ is a proper subspace of $ \mathscr{T}^2_{-3/2}(\Omega) $ with a finer topology. 
%Its dual is denoted $ \mathscr{T}^{-2}_{3/2}(\Omega) $. 
\end{remark}

\begin{theorem}\label{isodansEtheta} Let $f\in L^2_{\rho^{\theta}}(\Omega)$, with $1/2\leq \theta \leq 1$. Then, there exists a unique solution $u\in \mathscr{Q}^2_{- 2 +\theta}(\Omega) $ if $\theta\not= 1/2$, $\mathrm{(}$resp. $u\in  \mathscr{T}^2_{-3/2}(\Omega) $ if $\theta = 1/2\mathrm{)}$ satisfying $\Delta u = f$ in $\Omega$, with the corresponding estimate.
\end{theorem}

\begin{proof} We give a short proof  which takes the same ideas and the same notations of that of Theorem \ref{isoL^2rho}. We will prove only the case $\theta = 1/2$ since the proof is very similar for the case $\theta\not= 1/2$ . As $L^2_{\rho^{1/2}}(\Omega)$ is included in $H^{s-2}(\Omega)$ for any $1 \leq s < 3/2,$ we know that there exists a unique solution $u\in H^{s}_0(\Omega)$ satisfying $\Delta u = f$ in $\Omega$ for any $1 \leq s < 3/2.$ In order to prove that $u\in  \mathscr{T}^2_{-3/2}(\Omega) $, we take again as above the sequence of regular subdomains $\Omega_k$. 
We know that the mapping
$$
\Delta:\ \ \ H^{2}(\Omega_k)\cap H^1_0(\Omega_k)\ \ \ \longrightarrow\ \ \ L^2(\Omega_k)
$$
is then an isomorphism. As the following mapping
$$
\Delta:\ \ \ \mathscr{Q}^{2}_{-1}(\Omega_k)\ \ \ \longrightarrow\ \ \ L^2_{\rho_{k}}(\Omega_k)
$$
is also an isomorphism. Using \eqref {inegT2-3/2} we conclude that
\begin{equation}\label{a28-e2}
\Delta:\ \ \ \mathscr{T}^2_{-3/2}(\Omega_k) \ \ \ \longrightarrow\ \ \ L^2_{\sqrt\rho_k}(\Omega_k)
\end{equation}
is also an isomorphism. As a consequence, we get the following characterization
\begin{equation}\label{a28-e1}
\ds\left[H^2(\Omega_k)\cap H^1_0(\Omega_k), \mathscr{Q}^{2}_{-1}(\Omega_k)\right]_{1/2}\ =\ \mathscr{T}^2_{-3/2}(\Omega_k).
\end{equation}

So let $u_k\in H^{3/2}_0(\Omega_k)\cap H^2(\Omega_k)\subset \mathscr{T}^2_{-3/2}(\Omega_k)$ be the unique solution satisfying $\Delta u_k = f$ in $\Omega_k$. From the inequality \eqref{inegT2-3/2}, we get
\begin{equation}
\Vert u_k\Vert_{\mathscr{T}^2_{-3/2}(\Omega_k)} \leq C \Vert \sqrt{\rho_k} f \Vert_{ L^2(\Omega_{k})} \leq C \Vert \sqrt{\rho} f\Vert_{ L^2(\Omega)},
\end{equation}
where the constant $C$ depends only on the Lipschitz character of $\Omega$. The rest of the proof is very similar to that of Theorem \ref{isoL^2rho}.
\end{proof}

%For any fixed $\varphi\in L^2_{1/\sqrt\rho}(\Omega)$ (resp. $\varphi\in L^2_{1/\rho^\theta}(\Omega)$, with $1/2 < \theta \leq 1$),  the following linear form 
%$$
%v \mapsto \int_\Omega \varphi \Delta v = \langle \Delta \varphi, v\rangle_{\mathcal{D}'(\Omega)\times \mathcal{D}(\Omega)} 
%$$ 
%is continuous on $\mathcal{D}(\Omega)$ equipped with the norm of $\mathscr{T}^2_{-3/2}(\Omega) $ (resp. of $\mathscr{Q}^2_{- 2 +\theta}(\Omega)$). Thanks to the density of $\mathcal{D}(\Omega)$ in $\mathscr{T}^2_{-3/2}(\Omega) $ (resp. in $\mathscr{Q}^2_{- 2 +\theta}(\Omega)$)  it can be extended by continuity to a linear and continuous form on $\mathscr{T}^2_{-3/2}(\Omega)$ (resp. $\mathscr{Q}^2_{- 2 +\theta}(\Omega)$),  still denoted by $\Delta \varphi$ and we have 
%$$
%\Vert \Delta \varphi\Vert_{[\mathscr{T}^2_{-3/2}(\Omega)]'} \leq C \Vert \varphi \Vert_{L^2_{1/\sqrt\rho}(\Omega)}, \quad (\mathrm{resp.} \; \Vert \Delta \varphi\Vert_{[\mathscr{Q}^2_{- 2 +\theta}(\Omega)]'} \leq C \Vert \varphi \Vert_{L^2_{1/\rho^\theta}(\Omega)}).
%$$
 
\begin{remark}\upshape\label{d06-140318-p1} Is easy to prove the previous result in the case where $\Omega$ is a bounded open subset of $\R^N$ of class $C^{1,1}$. Since the regularity of $\Omega$, for example the following mapping, corresponding to $\theta = 1/2$
\begin{equation}\label{a28-e2bis}
\Delta:\ \ \ \left\{ v\in H^{3/2}(\Omega)\cap H^1_0(\Omega);\ \ \ \sqrt\rho D^2v\in \LG^2(\Omega)\right\}\ \ \ \longrightarrow\ \ \ L^2_{\sqrt\rho}(\Omega)
\end{equation}
is then an isomorphism. Indeed, recall that the mapping
$$
\Delta:\ \ \ H^{2}(\Omega)\cap H^1_0(\Omega)\ \ \ \longrightarrow\ \ \ L^2(\Omega)
$$
is an isomorphism. Moreover, thanks to Theorem \label{a26-theo1}, the following mapping
$$
\Delta:\ \ \ \mathscr{Q}^{2}_{-1}(\Omega)\ \ \ \longrightarrow\ \ \ L^2_\rho(\Omega)
$$
is also an isomorphism. We conclude by using interpolation arguments and the following characterization
\begin{equation}\label{a28-e1bis}
\ds\left[H^2(\Omega)\cap H^1_0(\Omega), \mathscr{Q}^{2}_{-1}(\Omega)\right]_{1/2}\ =\ \left\{ v\in H^{3/2}(\Omega)\cap H^1_0(\Omega);\ \ \rho^{1/2}D^2 v\in\LG^2(\Omega)\right\}.
\end{equation}
\end{remark}

%\begin{remark}\upshape  
%If $f\in L^2(\Omega)$, the solution $u$ belongs to $H^{3/2}(\Omega)$ (see Theorem B in \cite{J-K}).\\
%ii) This result can be improved as follows: if  $\sqrt \rho\, f\in L^2(\Omega)$, then $\sqrt \rho\, \nabla^2 u \in L^2(\Omega)$ and in particular  $u\in H^{3/2}(\Omega)  \, \mathrm{(}$
%Point iii) above is proved in \cite{AMN}.
%\end{remark}

%The next theorem ensures the existence with optimal regularity of solutions to Problem $(\mathcal{P}_D^H)$, with $\lambda = 0$, when the domain $\Omega$ is  $\mathcal{C}^{1, 1}$ (see \cite{AMN}). 
%
%\begin{theorem}\label{PDH, lambda=0R}$\mathrm{(}${\bf Regularity in} $H^s(\Omega)$  {\bf  for} $(\mathcal{P}_D^H)$  {\bf with} $\lambda=0 \mathrm {)}$. Let $\Omega$ be a  bounded open subset  of $\R^N$ of class $\mathcal{C}^{1, 1}$ and let $\frac{1}{2} < s\leq\frac{3}{2}$  and  $\lambda = 0$. 
%
%\noindent   Then for any $g\in H^{s+1/2}(\Gamma)$, Problem $(\mathcal{P}_D^H)$ has a unique solution $u\in H^{s + 1}(\Omega)$ with the estimate 
%\begin{equation}\label{}
%\left|\left| u\right|\right|_{H^{s + 1}(\Omega)}\ \leq \ C\,\left|\left| g\right|\right|_{H^{s+1/2}(\Gamma)}.
%\end{equation}
%Moreover,  if $s=3/2$, then $\sqrt \rho\,\nabla^3 u \in \textit{\textbf L}^2(\Omega),$ with the estimate 
%\begin{equation}\label{d02-240118-e1cexBi}
%\left|\left| u\right|\right|_{H^{5/2}(\Omega)}+\left|\left| \sqrt \rho\,\nabla^3 u\right|\right|_{\textit{\textbf L}^2(\Omega)} \ \leq \ C\,\left|\left| g\right|\right|_{H^{2}(\Gamma)}.
%\end{equation}
%
%\end{theorem}
\medskip

\begin{lemma}\label{H3demi} For any function v  satisfying 
$$
v\in H^1(\Omega), \quad \sqrt \rho \Delta v \in L^2(\Omega) \quad and\quad v_{\vert\Gamma} \in H^1(\Gamma)
$$
we have the following properties
\begin{equation}\label{H3demiracine de rhograd2}
v\in H^{3/2}(\Omega) \quad and\quad\sqrt \rho\,  \nabla^2 v \in L^2(\Omega),
 \end{equation}
with the estimate
\begin{equation}\label{estimabe} 
 \Vert v\Vert_{H^{3/2}(\Omega)} + \Vert \sqrt \rho\,  \nabla^2 v\Vert_{L^2(\Omega)} \leq C(\Omega) (\Vert  v\Vert_{H^{1}(\Omega)} + \Vert \sqrt \rho  \Delta v\Vert_{L^2(\Omega)} + \Vert v \Vert_{H^{1}(\Gamma) }).
 \end{equation}
\end{lemma}

\begin{proof} From Theorem \ref{isodansEtheta}, we know that there exists a unique solution $u\in H^{3/2}_0(\Omega)$, with  $ \sqrt \rho\, \nabla^2 u \in L^2(\Omega)$ and satisfying $ \Delta u = \Delta v$ in $ \Omega $ and the following estimate
\begin{equation}\label{estimabe1} 
 \Vert u\Vert_{H^{3/2}(\Omega)} + \Vert \sqrt \rho\,  \nabla^2 u\Vert_{L^2(\Omega)} \leq C(\Omega) \Vert \sqrt \rho  \Delta v\Vert_{L^2(\Omega)}.
 \end{equation}
Setting now $w = v - u$, we get 
$$
w\in H^1(\Omega), \quad \Delta w = 0 \quad \mathrm{in}\; \Omega  \quad \mathrm{and}\quad w_{\vert\Gamma} \in H^1(\Gamma).
$$
We can then use Theorem \ref{PDH, lambda=0} Point i) to deduce the existence of a unique  $z \in H^{3/2}(\Omega)$ with  $ \sqrt \rho\, \nabla^2 z \in L^2(\Omega)$ solution of the problem
$$
\Delta z = 0  \quad \mathrm{in}\; \Omega \quad \mathrm{and}\quad z = w \quad  \mathrm{on}\; \Gamma.
$$
Moreover we have te estimate
\begin{equation}\label{estimabe2} 
 \Vert z\Vert_{H^{3/2}(\Omega)} + \Vert \sqrt \rho\,  \nabla^2 z\Vert_{L^2(\Omega)} \leq C(\Omega) \Vert w\Vert_{H^1(\Gamma)}.
 \end{equation}
As the harmonic function $z - w \in H^1_0(\Omega)$, it follows that $ z = w$ in $\Omega$ and also $w \in H^{3/2}(\Omega)$ with  $ \sqrt \rho\, \nabla^2 w \in L^2(\Omega)$. 

Finally, since $ v = w + u$ we deduce  the properties \eqref{H3demiracine de rhograd2} and thanks to \eqref{estimabe1} and \eqref{estimabe2} we get the required estimate \eqref{estimabe}.\end{proof}

The next result concerns the Neumann problem  $(\mathcal{P}_N)$, with $\lambda = 0$.

\begin{theorem}\label{PNH, lambda=0} $\mathrm{(}${\bf Existence in} $H^s(\Omega)$  {\bf  for} $(\mathcal{P}_N)$  {\bf with} $\lambda=0 \mathrm {)}$ Let $\frac{1}{2}\leq s\leq\frac{3}{2}$  and  $\lambda = 0$.

\noindent  i) Then for any 
$$ 
h\in H^{s- 3/2}(\Gamma)\quad with \quad < h, 1> = 0,
$$
Problem $(\mathcal{P}_N^H)$ has a unique solution $u\in H^{s}(\Omega)\cap L^2_0(\Omega)$ with the estimate 
\begin{equation*}\label{}
|| u||_{H^{s}(\Omega)} \leq\ C\, ||h||_{H^{s-3/2}(\Gamma)}.
\end{equation*}
Moreover,  if $s=1/2$, then $\sqrt \rho\,\nabla u \in  (L^2(\Omega))^{N},$ with the estimate
\begin{equation*}\label{}
\left|\left| u\right|\right|_{H^{1/2}(\Omega)} + \left\Vert \sqrt \rho\,\nabla u \right\Vert_{\textit{\textbf L}^2(\Omega)}\ \leq \ C\,\left|\left| h\right|\right|_{H^{-1}(\Gamma)}.
\end{equation*}
 If $s=3/2$, then $\sqrt \rho\,\nabla^2 u \in  (L^2(\Omega))^{N\times N},$ with the estimate 
\begin{equation*}\label{}
\left|\left| u\right|\right|_{H^{3/2}(\Omega)}+\left|\left| \sqrt \rho\,\nabla^2 u\right|\right|_{\textit{\textbf L}^2(\Omega)} \ \leq \ C\,\left|\left| h\right|\right|_{L^{2}(\Gamma)}.
\end{equation*}
ii) For 
$$
s=3/2, \quad f\in L^2(\Omega) \quad and\quad  h \in L^2(\Gamma)\quad with \quad \int_{\Omega} f \, + < h, 1> = 0,
$$ 
Problem $(\mathcal{P}_N)$ has a unique solution $u\in H^{3/2}(\Omega)\cap L^2_0(\Omega)$ with the estimate 
\begin{equation*}\label{d02-240118-e1cexbisBi}
\left|\left| u\right|\right|_{H^{3/2}(\Omega)}+\left|\left| \sqrt \rho\,\nabla^2 u\right|\right|_{\textit{\textbf L}^2(\Omega)} \ \leq \ C \left(\Vert f \Vert_{L^2(\Omega)} + \left|\left| h\right|\right|_{H^{1}(\Gamma)} \right).
\end{equation*}
\end{theorem}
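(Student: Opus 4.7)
The overall strategy is to establish existence at the natural level $s=1$ by Lax--Milgram, then obtain the sharp regularity at the endpoints $s=1/2$ and $s=3/2$ from Theorems \ref{Necas Property} and \ref{PDH, lambda=0}, and finally interpolate. Part ii) is reduced to part i) by lifting the interior source $f$.

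For $h\in H^{-1/2}(\Gamma)$ with $\langle h,1\rangle=0$, the Lax--Milgram lemma applied to the bilinear form $\int_\Omega \nabla u\cdot\nabla v$ on $H^1(\Omega)/\mathbb R$ produces a unique $u\in H^1(\Omega)\cap L^2_0(\Omega)$ solving $(\mathcal P_N^H)$, the compatibility condition on $h$ being precisely what makes the right-hand side well defined on the quotient. At the upper endpoint $s=3/2$, I take $h\in L^2(\Gamma)$: the Ne\v{c}as property (Theorem \ref{Necas Property}, parts i) and ii)), applied to the harmonic $u$ for which $\partial u/\partial\textit{\textbf n}=h\in L^2(\Gamma)$, upgrades the solution to $u\in H^{3/2}(\Omega)$ with $\sqrt d\,\nabla^2 u\in\textit{\textbf L}^2(\Omega)$ and the stated bound. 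At the lower endpoint $s=1/2$, I instead invert the boundary map: by Theorem \ref{Necas Property} iv), the Steklov--Poincar\'e operator $S:L^2(\Gamma)/\mathbb R\to H^{-1}(\Gamma)\perp\mathbb R$ is an isomorphism, so for $h\in H^{-1}(\Gamma)$ with $\langle h,1\rangle=0$ there exists $g\in L^2(\Gamma)$, unique modulo constants, with $Sg=h$; the harmonic extension of $g$ furnished by Theorem \ref{PDH, lambda=0} i) at $s=1/2$ then yields $u\in H^{1/2}(\Omega)$ with $\sqrt d\,\nabla u\in\textit{\textbf L}^2(\Omega)$, and by construction $\partial u/\partial\textit{\textbf n}=Sg=h$; choosing the zero-mean representative places $u$ in $L^2_0(\Omega)$.

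For the intermediate range $s\in(1/2,3/2)$, I interpolate the resulting linear solution map $h\mapsto u$ between its $s=1/2$ and $s=3/2$ avatars using the standard identifications $[H^{-1}(\Gamma),L^2(\Gamma)]_{s-1/2}=H^{s-3/2}(\Gamma)$ and $[H^{1/2}(\Omega),H^{3/2}(\Omega)]_{s-1/2}=H^s(\Omega)$; uniqueness ensures that the two endpoint maps coincide on $L^2(\Gamma)$, so the interpolation is legitimate. For part ii), I split $u=v+w$, where $v\in H^{3/2}(\Omega)\cap H^1_0(\Omega)$ solves the Dirichlet problem $-\Delta v=f$ with $v_{|\Gamma}=0$, provided by Theorem \ref{PDH, lambda=0} iii) with $\theta=1/2$, so that $\sqrt d\,\nabla^2 v\in\textit{\textbf L}^2(\Omega)$ and the Ne\v{c}as property applied to $v$ (with $v_{|\Gamma}=0\in H^1(\Gamma)$) yields $\partial v/\partial\textit{\textbf n}\in L^2(\Gamma)$; the remainder $w$ then solves the homogeneous Neumann problem with datum $\tilde h:=h-\partial v/\partial\textit{\textbf n}\in L^2(\Gamma)$, whose compatibility $\langle\tilde h,1\rangle=\langle h,1\rangle+\int_\Omega f=0$ is inherited from the hypothesis, so part i) at $s=3/2$ furnishes $w$ and superposition completes the proof.

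The main obstacle is the endpoint $s=1/2$: it rests on the surjectivity of $S:L^2(\Gamma)/\mathbb R\to H^{-1}(\Gamma)\perp\mathbb R$ on Lipschitz domains, an input that is highly nontrivial for such rough geometries. Once Theorems \ref{Necas Property} and \ref{PDH, lambda=0} are in hand, the remaining steps are standard combinations of Lax--Milgram, the Ne\v{c}as property, and real interpolation.
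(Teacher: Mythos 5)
The paper does not actually prove this theorem: it is presented in the subsection ``Case of the Laplace equation'' as a recollection of known results, with a pointer to the literature (Jerison--Kenig, Ne\v{c}as, and, for the weighted estimates, the reference \cite{AMN}). So there is no in-paper argument to compare against; your proposal is a genuine reconstruction. Taken on its own terms it is essentially sound and uses exactly the ingredients the paper makes available: Lax--Milgram at $s=1$, Theorem \ref{Necas Property} i)--ii) to climb to $s=3/2$, the surjectivity of $S:L^2(\Gamma)/\R\to H^{-1}(\Gamma)\perp\R$ from Theorem \ref{Necas Property} iv) combined with Theorem \ref{PDH, lambda=0} i) to descend to $s=1/2$, interpolation in between, and a lift of $f$ through Theorem \ref{PDH, lambda=0} iii) with $\theta=1/2$ for part ii). The reduction in part ii) is correctly wired: $\Delta v=-f$ with $v_{|\Gamma}=0$ gives $\partial v/\partial\textit{\textbf n}\in L^2(\Gamma)$ by the Ne\v{c}as estimate with vanishing boundary term, and the compatibility $\langle\tilde h,1\rangle=0$ follows from the divergence theorem as you state.

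Three small points deserve attention. First, at $s=1/2$ you construct \emph{a} solution via $S^{-1}$, but the claimed uniqueness in $H^{1/2}(\Omega)\cap L^2_0(\Omega)$ is not the Lax--Milgram uniqueness (which lives in $H^1$); you should note that a harmonic $u\in H^{1/2}(\Omega)$ with $\partial u/\partial\textit{\textbf n}=0$ has trace $g$ with $Sg=0$, hence is constant by the injectivity half of Theorem \ref{Necas Property} iv). Second, Theorem \ref{Necas Property} ii) asserts the regularity $u\in H^{3/2}(\Omega)$, $\sqrt d\,\nabla^2u\in\textit{\textbf L}^2(\Omega)$ but not the quantitative bound you use at $s=3/2$; the needed inequality is the one the paper later quotes from \cite{AMN} in the proof of Theorem \ref{th1} (the estimate \eqref{estimabe}), and you should invoke it explicitly. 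Third, in part ii) the lift $v$ has no reason to have zero mean, so the final solution must be renormalized by a constant to land in $L^2_0(\Omega)$; this is harmless for the estimate but should be said. None of these is a genuine gap in the architecture of the argument.
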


\subsection{Case $\lambda \in \R^*$}

We will now study the case where $\lambda$ is a real number not equal to $ 0$. 

\begin{theorem}\label{th1} $\mathrm{(}${\bf Resolvent Boundary Estimates for} $(\mathcal{P}_N^H) \mathrm{)}$ Suppose $\lambda \in \R^*$. Then Problem $(\mathcal{P}_N^H)$ has a unique solution which satisfies 
\begin{equation*}
u\in H^{3/2}(\Omega), \; \sqrt \rho\,\nabla^2 u \in (L^2(\Omega))^{N\times N}\quad and \quad u\in H^1(\Gamma),
\end{equation*}
with the following estimate:
\begin{equation}\label{estim1}
\begin{array}{rl}
\vert \lambda \vert^{3/2} \,  \Vert u\Vert_{L^2(\Omega)} &  + \; \vert\lambda\vert \, \Vert \sqrt \rho \nabla u\Vert_{L^2(\Omega)} +\;   \vert \lambda \vert^{1/2}  \Vert  u\Vert_{H^1(\Omega)} + \Vert u\Vert_{H^{3/2}(\Omega)} \; + \\\\
& + \; \Vert \sqrt \rho  \nabla^2 u\Vert_{L^2(\Omega)}+ \vert \lambda \vert \, \Vert u\Vert_{L^2(\Gamma)}  + \Vert u  \Vert_{H^1(\Gamma)} \leq C \Vert  h\Vert_{L^2(\Gamma)} 
\end{array}
\end{equation}
which holds\\ 
{\bf i)} if $\int_{\Gamma} h = 0$.  In this case  $\int_{\Omega} u = 0$ and the constant $C$ depends only on $\Omega$ and not on $\lambda$,\\
or\\
{\bf ii)} if $\int_{\Gamma} h \not= 0$, for any $\vert\lambda \vert \ge \lambda_0$  with arbitrary real fixed number  $\lambda_0 > 0$. In this case  the constant $C$ depends on $\lambda_0$ and on $\Omega$.

%\nabla_{\boldsymbol{\tau}}u
\end{theorem}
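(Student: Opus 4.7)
\textbf{Existence} follows from Lax--Milgram applied to the coercive sesquilinear form $a(u,v)=\int_\Omega(\nabla u\cdot\nabla\bar v+\lambda^2 u\bar v)\,dx$ on $H^1(\Omega)$ for $\lambda\in\R^*$, since $v\mapsto\int_\Gamma h\bar v\,d\sigma$ is continuous on $H^1(\Omega)$; integrating the PDE yields $\lambda^2\int_\Omega u=\int_\Gamma h$, so $\int_\Omega u=0$ whenever $\int_\Gamma h=0$. Testing with $\bar u$ and taking real parts gives the \textbf{basic energy identity} $\lambda^2\|u\|^2_{L^2(\Omega)}+\|\nabla u\|^2_{L^2(\Omega)}=\mathrm{Re}\int_\Gamma h\,\bar u\,d\sigma$. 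In case~(i), writing $\int_\Gamma h\,\bar u=\int_\Gamma h(\bar u-c)$ with $c$ the boundary mean of $\bar u$ and combining the trace theorem with Poincar\'e--Wirtinger yields $\|\nabla u\|_{L^2(\Omega)}+|\lambda|\,\|u\|_{L^2(\Omega)}\le C\|h\|_{L^2(\Gamma)}$ with $C$ independent of $\lambda$; in case~(ii) the same follows from the plain trace inequality after absorbing the lower-order terms using $|\lambda|\ge\lambda_0$.

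The crucial \textbf{Rellich-type multiplier} step consists in picking $m\in W^{1,\infty}(\overline\Omega;\R^N)$ with $m\cdot n\ge\delta>0$ on $\Gamma$ (classical in Lipschitz domains), testing the equation with $\overline{m\cdot\nabla u}$, taking real parts and rearranging to obtain
\[
\int_\Gamma (m\cdot n)\bigl[|\nabla u|^2+\lambda^2|u|^2\bigr]d\sigma = 2\,\mathrm{Re}\int_\Gamma (m\cdot\nabla\bar u)\,h\,d\sigma + R(u,m),
\]
with $|R(u,m)|\le C\bigl(\|\nabla u\|^2_{L^2(\Omega)}+\lambda^2\|u\|^2_{L^2(\Omega)}\bigr)$. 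Splitting $\nabla u|_\Gamma=\nabla_\tau u+h\,n$, absorbing $\|\nabla_\tau u\|_{L^2(\Gamma)}$ via Young's inequality, and feeding in the energy bound give $\|u\|_{H^1(\Gamma)}+|\lambda|\,\|u\|_{L^2(\Gamma)}\le C\|h\|_{L^2(\Gamma)}$. Re-injecting $\|u\|_{L^2(\Gamma)}\le C\|h\|_{L^2(\Gamma)}/|\lambda|$ into the energy identity then upgrades the interior estimates to the sharp $|\lambda|^{3/2}\|u\|_{L^2(\Omega)}+|\lambda|^{1/2}\|u\|_{H^1(\Omega)}\le C\|h\|_{L^2(\Gamma)}$.

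A \textbf{second, weighted multiplier}, namely $d\,\bar u$ (which vanishes on $\Gamma$), yields
\[
\int_\Omega d\,|\nabla u|^2+\lambda^2\int_\Omega d\,|u|^2 = -\mathrm{Re}\int_\Omega \bar u\,\nabla d\cdot\nabla u\,dx,
\]
whose right-hand side is bounded by $C\|u\|_{L^2(\Omega)}\|\nabla u\|_{L^2(\Omega)}\le C\|h\|^2_{L^2(\Gamma)}/\lambda^2$ by the previous step, producing $|\lambda|\,\|\sqrt d\,\nabla u\|_{L^2(\Omega)}+\lambda^2\,\|\sqrt d\,u\|_{L^2(\Omega)}\le C\|h\|_{L^2(\Gamma)}$. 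For the $H^{3/2}$ bound I decompose $u=u_1+u_2$, where $u_1$ is harmonic with trace $u|_\Gamma\in H^1(\Gamma)$ and $u_2$ solves the Dirichlet problem $-\Delta u_2=-\lambda^2 u$, $u_2|_\Gamma=0$: Theorem~\ref{PDH, lambda=0}~i) with $s=3/2$ controls $u_1$ by $\|u|_\Gamma\|_{H^1(\Gamma)}\le C\|h\|_{L^2(\Gamma)}$, while Theorem~\ref{PDH, lambda=0}~iii) with $\theta=1/2$ controls $u_2$ by $\|\sqrt d\,(\lambda^2 u)\|_{L^2(\Omega)}\le C\|h\|_{L^2(\Gamma)}$, and summing concludes.

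\textbf{The main obstacle} will be bookkeeping the precise powers of $|\lambda|$ so that every $\lambda$-weighted and unweighted term on the left-hand side of \eqref{estim1} comes out with a constant independent of $\lambda$ in case~(i); in particular the weighted bound $\lambda^2\,\|\sqrt d\,u\|_{L^2(\Omega)}\le C\|h\|_{L^2(\Gamma)}$ — one full power of $|\lambda|$ stronger than the plain energy bound, with that extra power paid for by the weight $\sqrt d$ — is exactly what allows Theorem~\ref{PDH, lambda=0}~iii) to be applied with no loss of uniformity in $\lambda$.
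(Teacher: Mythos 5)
Your skeleton coincides with the paper's: the same energy identity, the same Rellich multiplier to obtain the boundary estimate \eqref{ab}, the same weighted identity (your multiplier $d\,\bar u$ gives, after multiplying through by $\lambda^2$ and using $\Delta u=\lambda^2 u$, exactly the paper's identity \eqref{energieavecd}), and the same bootstrap to $H^{3/2}$. Your treatment of the $H^{3/2}$ bound via the splitting $u=u_1+u_2$ --- a harmonic part with trace $u|_\Gamma\in H^1(\Gamma)$ controlled by Theorem \ref{PDH, lambda=0}~i), plus a homogeneous Dirichlet part with datum $-\lambda^2 u$ controlled by Theorem \ref{PDH, lambda=0}~iii) with $\theta=1/2$ --- is a legitimate alternative to the paper's direct appeal to the regularity estimate \eqref{estimabe}; since $\sqrt d\,\Delta u=\lambda^2\sqrt d\,u$, the two inputs are the same quantity, and the bookkeeping of powers of $|\lambda|$ that you flag does close.

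The genuine gap is the justification of the Rellich identity itself. You test the equation with $\overline{m\cdot\nabla u}$ and integrate by parts as if $u\in H^2(\Omega)$ and $\nabla u$ had an $L^2(\Gamma)$ trace; but on a merely Lipschitz domain with $h\in L^2(\Gamma)$ the solution is only in $H^{3/2}(\Omega)$ with $\sqrt d\,\nabla^2 u\in L^2(\Omega)$, and the membership $\nabla_{\mathscr T}u\in L^2(\Gamma)$ is precisely the conclusion you are after, so it cannot be presupposed when writing the boundary terms. The phrase ``classical in Lipschitz domains'' covers only the existence of the vector field $m$ with $m\cdot n\ge\delta>0$ on $\Gamma$, not the licitness of the computation. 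The paper devotes most of its proof (Steps 2--5) to exactly this point: the identity is first established for $\Omega$ of class $\mathcal{C}^{1,1}$ and $h\in H^{1/2}(\Gamma)$, where $u\in H^2(\Omega)$; then a Lipschitz $\Omega$ is exhausted by smooth subdomains $\Omega_k$ with uniformly Lipschitz boundaries, the approximate solutions $u_k$ are shown to converge strongly in $H^1(\Omega)$ and their traces weakly in $H^1$, the constants are checked to depend only on the Lipschitz character of $\Omega$, and finally $h\in H^{1/2}(\Gamma)$ is relaxed to $h\in L^2(\Gamma)$ by density. Without some version of this approximation argument (or a citation to a Rellich estimate already proved on Lipschitz domains in the Jerison--Kenig framework), your central estimate $\|u\|_{H^1(\Gamma)}+|\lambda|\,\|u\|_{L^2(\Gamma)}\le C\|h\|_{L^2(\Gamma)}$ is not established, and everything downstream of it is conditional.
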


\begin{proof} Clearly, Problem $(\mathcal{P}_N^H)$ has a unique solution $u\in H^1(\Omega)$. By Theorem \ref{PNH, lambda=0}, we deduce that  $u\in H^{3/2}(\Omega)$ and $\sqrt \rho\,\nabla^2 u \in (L^2(\Omega))^{N\times N}$. Moreover, multiplying by $u$ in $(\mathcal{P}_N^H)$, we get
\begin{equation}\label{egaliteenergie}
\lambda^2 \Vert u\Vert^2_{L^2(\Omega)} + \Vert \nabla u\Vert^2_{L^2(\Omega)} = \int_{\Gamma} uh .
\end{equation}

\noindent{\bf 1. Case} $\int_{\Gamma} h = 0$.

In this case we have $\int_{\Omega} u = 0$. Using Poincar\'e--Wirtinger inequality and the traces properties, we get from \eqref{egaliteenergie}
\begin{equation*}
\Vert u\Vert^2_{H^1(\Omega)} \leq C(\Omega) \Vert \nabla u\Vert^2_{L^2(\Omega)}  \leq C(\Omega) \Vert u\Vert_{L^2(\Gamma)} \Vert h\Vert_{L^2(\Gamma)} \leq C(\Omega) \Vert u\Vert_{H^1(\Omega))} \Vert h\Vert_{L^2(\Gamma)}.
\end{equation*}
From this last inequality and the relation \eqref{egaliteenergie}, we deduce that 
\begin{equation}\label{estimnablau}
\vert \lambda \vert \,  \Vert u\Vert_{L^2(\Omega)} + \Vert u\Vert_{H^1(\Omega)} \leq C(\Omega)  \Vert h\Vert_{L^2(\Gamma)},
\end{equation}
where the constant $C(\Omega)$ depends only on $\Omega$ and not on $\lambda$. 

We claim now that 
\begin{equation}\label{ab}
 \vert \lambda \vert \, \Vert u\Vert_{L^2(\Gamma)}  + \Vert\nabla_{\mathscr{T}}u  \Vert_{L^2(\Gamma)} \leq  C(\Omega) \Vert  h\Vert_{L^2(\Gamma)}.
\end{equation}
Which then implies four inequalities. The first one is given by
\begin{equation}\label{abb}
 \vert \lambda \vert \, \Vert u\Vert_{L^2(\Gamma)}  + \Vert u  \Vert_{H^1(\Gamma)} \leq  C(\Omega) \Vert  h\Vert_{L^2(\Gamma)}
\end{equation}
since $ \Vert u\Vert_{L^2(\Gamma)} \leq C(\Omega)  \Vert h\Vert_{L^2(\Gamma)}$ and also, after using  \eqref{egaliteenergie}, the following estimate 
\begin{equation}\label{estimabc}
\vert \lambda \vert^{3/2} \,  \Vert u\Vert_{L^2(\Omega)} + \vert \lambda \vert^{1/2}  \Vert  \nabla u\Vert_{L^2(\Omega)}   \leq  C(\Omega) \Vert  h\Vert_{L^2(\Gamma)}.
\end{equation}
For the third inequality, we multiply by $-\rho\Delta u$ the equation $\lambda^2 u - \Delta u = 0 $ and we integrate by parts:
\begin{equation}\label{energieavecd}
\lambda^2 \Vert \sqrt \rho \nabla u\Vert^2_{L^2(\Omega)} + \Vert \sqrt \rho \Delta u\Vert^2_{L^2(\Omega)} = -  \lambda^2 \int_{\Omega} u \nabla  \rho \cdot \nabla u\leq C(\Omega)  \lambda^2 \Vert u\Vert_{L^2(\Omega)} \Vert \nabla u\Vert_{L^2(\Omega)} 
\end{equation}
since $\nabla  \rho  \in L^\infty (\Omega)$. From \eqref{estimabc}, we then get the following inequality:
\begin{equation}\label{estimabd}
\vert\lambda\vert \Vert \sqrt \rho \nabla u\Vert_{L^2(\Omega)} + \Vert \sqrt \rho \Delta  u\Vert_{L^2(\Omega)} \leq C(\Omega) \Vert h\Vert_{L^2(\Gamma)}.
\end{equation}
%Now, recall that (see \cite{AMN}) if $v\in H^{1}(\Omega)$, with $ \sqrt \rho \Delta v\in L^2(\Omega)$ and $v \in H^{1}(\Gamma)$, then $v\in H^{3/2}(\Omega)$,  $ \sqrt \rho \nabla^2 v\in L^2(\Omega)$, with the estimate
%\begin{equation}\label{estimabe} 
% \Vert v\Vert_{H^{3/2}(\Omega)} + \Vert \sqrt \rho  \nabla^2 v\Vert_{L^2(\Omega)} \leq C(\Omega) (\Vert  v\Vert_{H^{1}(\Omega)} + \Vert \sqrt \rho  \Delta v\Vert_{L^2(\Omega)} + \Vert v \Vert_{H^{1}(\Gamma) }).
% \end{equation}
Using then \eqref{estimnablau},    \eqref{abb} and  \eqref{estimabd} and Lemma \ref{H3demi}, we get the fourth inequality:
 \begin{equation}\label{estimabf} 
 \Vert u\Vert_{H^{3/2}(\Omega)} + \Vert \sqrt \rho\,   \nabla^2 u\Vert_{L^2(\Omega)} \leq C(\Omega)  \Vert h\Vert_{L^2(\Gamma)}.
 \end{equation}
 From the estimates  \eqref{abb}, \eqref{estimabc},  \eqref{estimabd}  and \eqref{estimabf} we finally get the required estimate \eqref{estim1}

%it remains to show that 
%\begin{equation}\label{ab}
% \vert \lambda \vert \, \Vert u\Vert_{L^2(\Gamma)}  + \Vert\nabla_{\mathscr{T}}u  \Vert_{L^2(\Gamma)} \leq  C(\Omega) \Vert  h\Vert_{L^2(\Gamma)}.
%\end{equation}
To prove the estimate \eqref{ab}, we need five steps.

\noindent{\bf Step 1}. {\it We first suppose that $\Omega$ is $\mathcal{C}^{1,1}$ and $h\in H^{1/2}(\Gamma)$}.  \medskip

Then we know that the solution  $u\in H^2(\Omega)$ and we can use the following Rellich identity:
\begin{equation}\label{Rellich}
\begin{array}{rl}
\ds \int_\Gamma \textbf{\textit h}\cdot\textbf{\textit n}\left|\nabla_{\mathscr{T}}u\right|^2 &+\,  2 \ds \int_\Omega(\textbf{\textit h} \cdot\nabla u)\,\Delta u  = \ds \int_\Gamma \textbf{\textit h}\cdot\textbf{\textit n}\left|\frac{\partial u}{\partial \textbf{\textit n}}\right|^2
 + 2 \int_\Gamma \textbf{\textit h}_{\mathscr{T}}\cdot\nabla u\frac{\partial u}{\partial \textbf{\textit n}}\\\\
&\ds+\int_\Omega\left[ (\mathrm{div}\, \textbf{\textit h})\nabla u -  2 \frac{\partial u}{\partial x_k}\frac{\partial \textbf{\textit h}}{\partial x_k}\right]\cdot\nabla u,
\end{array}
\end{equation}
where $\textbf{\textit h}=\textbf{\textit h}_{\mathscr{T}}+(\textbf{\textit h}\cdot\textbf{\textit n})\textbf{\textit n}$  and $\textbf{\textit h}_{\mathscr{T}}$ is the tangential component of the vector $\textbf{\textit h}$. But,
\begin{equation*}
\int_{\Omega} (\textbf{\textit h} \cdot\nabla u)\,\Delta u  = \frac{1}{2} \lambda^2 \int_{\Omega} \textbf{\textit h} \cdot\nabla (u^2) = -  \frac{1}{2} \lambda^2 \int_{\Omega}  u^2  \mathrm{div}\, \textbf{\textit h} + \frac{1}{2} \lambda^2 \int_{\Gamma}  u^2   \textbf{\textit h}\cdot \textbf{\textit n},
\end{equation*}
so we get the following relation:
\begin{equation}\label{RellichBi}
\begin{array}{rl}
&\ds \int_\Gamma \textbf{\textit h}\cdot\textbf{\textit n} \left(\frac{1}{2}\lambda^2 u^2 +  \left|\nabla_{\mathscr{T}}u\right|^2\right) 
 = \ds \int_\Gamma \textbf{\textit h}\cdot\textbf{\textit n}\left|\frac{\partial u}{\partial \textbf{\textit n}}\right|^2
 + 2 \int_\Gamma \textbf{\textit h}_{\mathscr{T}}\cdot\nabla u\frac{\partial u}{\partial \textbf{\textit n}}\\\\
&\ds+\int_\Omega\left[ (\mathrm{div}\, \textbf{\textit h})\nabla u -  2 \frac{\partial u}{\partial x_k}\frac{\partial \textbf{\textit h}}{\partial x_k}\right]\cdot\nabla u + \frac{1}{2} \lambda^2 \int_{\Omega}  u^2  \mathrm{div}\, \textbf{\textit h}.
\end{array}
\end{equation}
Choosing  a vector field $\textbf{\textit h} \in \mathcal{C}^\infty(\overline{\Omega})$ such that $\textbf{\textit h}\cdot\textbf{\textit n} \ge \alpha > 0$ on $\Gamma$, the previous inequality implies that
\begin{equation*}\label{Rellichestim1}
\begin{array}{rl}
\ds \int_\Gamma (\lambda^2 u^2  +  \left|\nabla_{\mathscr{T}}u\right|^2) 
& \leq  C(\Omega) \Big(\Vert h \Vert^2_{L^2(\Gamma)} + \Vert \nabla_{\mathscr{T}}u\Vert_{L^2(\Gamma)}\Vert h \Vert_{L^2(\Gamma)} \, +\\\\
&  + \; \Vert \nabla u \Vert^2_{L^2(\Omega)} + \lambda^2 \Vert u \Vert^2_{L^2(\Omega)}\Big),
\end{array}
\end{equation*}
and finally thanks to \eqref{estimnablau} and \eqref{egaliteenergie}, we have
\begin{equation*}\label{Rellichestim2}
\ds \int_\Gamma (\lambda^2 u^2  +  \left|\nabla_{\mathscr{T}}u\right|^2) 
 \leq  C(\Omega) \Big(\Vert h \Vert^2_{L^2(\Gamma)} +  \Vert u \Vert^2_{L^2(\Gamma)}\Big) \leq C(\Omega) \Vert h \Vert^2_{L^2(\Gamma)},
\end{equation*}
%because by using again \eqref{estimnablau}
%$$
% \Vert u \Vert_{L^2(\Gamma)} \leq C(\Omega)\Vert \nabla u \Vert_{L^2(\Omega)}\leq C(\Omega) \Vert h \Vert_{L^2(\Gamma)},
% $$
where the constant $C(\Omega)$ depends only on the Lipschitz character of $\Omega$.\medskip

\noindent{\bf Step 2}. {\it We now do not suppose that  $\Omega$ is $\mathcal{C}^{1,1}$ but we suppose that $h\in H^{1/2}(\Gamma)$}.
% but still $u\in H^{3/2}(\Omega)$ and  $\ds\frac{\partial u}{\partial\textbf{\textit n}}\in L^2(\Gamma)$. 
 
%We know that  $u\in H^1(\Gamma)$ and we want to prove the estimate \eqref{estim1}. \medskip
To prove the estimate \eqref{ab}, it suffices to consider $\Omega$ of the form
 $$
\Omega\ = \ \left\{\,(\textit{\textbf x}',x_N)\in\R^N;\ \ x_N<\xi(\textit{\textbf x}') \right\},
$$
where $\xi\in \mathcal{C}^{0,1}(\R^{N-1})$ with supp $\xi$ compact. As in \cite{McLean}, we choose a sequence  $\xi_k\in \mathcal{C}^\infty(\R^{N-1})$ such that
%Using Theorem 2.2 of \cite{ACM} (proved for $\Omega$ bounded Lipschitz domain, but that we can extend easily to the case of our epigraph domain  $\Omega$), we know that
%$$
% \Omega = \bigcup_{k\geq 1}\Omega_k,\quad \mathrm{with}\quad \Omega_k \; \mathrm{of\, \, class} \; \; \mathcal{C}^{\infty},\quad\overline{ \Omega_k} \subset  \Omega_{k+1},
%$$
%where 
%$$
%\Omega_k  = \left\{x\in \Omega;\; \rho(x, \Gamma)  > \frac{1}{k}\right\}.
%$$
%The function $\rho$ is the regularized signed distance to $\Gamma$, which satisfies:
%$$
%\forall x\in \Omega, \quad C_1 d(x, \Gamma) \leq \rho (x, \Gamma) \leq C_2d(x, \Gamma).
%$$
%Moreover, for each $k \ge 1$ the boundary $\Gamma_k$ of $\Omega_k$ is described by a function $\xi_k\in \mathcal{C}^\infty(\R^{N-1})$ such that
\begin{enumerate}
\item  $\xi_k\  \longrightarrow \ \ \xi\ \textrm{ in }  L^\infty(\R^{N-1})\quad $ and $\quad \nabla\xi_k\  \longrightarrow \ \ \nabla\xi\ \textrm{ in }  L^p(\R^{N-1})$
 for any $1\leq p<\infty$
 \item $\xi_k\leq \xi  \textrm{ on } \R^{N-1}$ and $\left|\left|\, \nabla\xi_k\,\right|\right|_{L^\infty(\R^{N-1})}\leq C$ for any $k\geq 1$
 \item $\xi_k(\textit{\textbf x}')=\xi(\textit{\textbf x}')$ if $|\textit{\textbf x}'|\geq R_0$
\end{enumerate}
and we set 
$$
\Omega_k\ = \ \left\{\,(\textit{\textbf x}',x_N)\in\R^N;\ \ x_N<\xi_k(\textit{\textbf x}') \right\}.
$$

We define now the following sequence
$$
\textit{\textbf x}'\in \R^{N-1},\quad h_k(\textit{\textbf x}',\xi_k(\textit{\textbf x}'))= \ds\chi(x_N) h(\textit{\textbf x}',\xi(\textit{\textbf x}'))\frac{\theta(\textit{\textbf x}')}{\theta_k(\textit{\textbf x}')}
$$
where $\chi\in \mathcal{D}(\R)$ and $\chi=1$ in a neighbourhood  of $[\mathrm{min}\,\xi,\mathrm{max}\,\xi]$, $\theta$ and $\theta_k$ are defined as follow:
$$
\theta_k(\textit{\textbf x}')=(1+\vert\nabla\xi_k(\textit{\textbf x}')\vert^2)^{1/2},\ \quad\ 
\theta(\textit{\textbf x}')=(1+\vert\nabla\xi(\textit{\textbf x}')\vert^2)^{1/2}
$$
and observe that for any $1\leq p < \infty$ 
$$
1\leq \theta_k\leq C, \quad \theta_k\rightarrow \theta\ \textrm{ in } L^p(\R^{N-1}).
$$

It is clear that 
$$
h_k\in H^{1/2}(\Gamma_k)\quad \mathrm{and} \int_{\Gamma_{k}} h_k = 0.
$$
Let $u_k\in H^2(\Omega_k)\cap L^2_0(\Omega_k)$ be the unique solution of the following equation
\begin{equation*}\label{d03-080318-ce1sup}
\lambda^2 u_k - \Delta u_k  = 0\ \textrm{ in }\ \Omega_k,\quad \frac{\partial u_k}{\partial\textbf{\textit n}_k} = h_k \textrm{ on } \Gamma_k.
\end{equation*}
As above, we have
\begin{equation}\label{estimnablauk} 
\Vert u_k\Vert_{H^1(\Omega_k)}  \leq C(\Omega_k) \Vert h_k\Vert_{L^2(\Gamma_k)}\leq C(\Omega) \Vert h\Vert_{L^2(\Gamma)},
\end{equation}
since the constant in the Poincar\'e--Wirtinger inequality depends only on the Lipschitz character of $\Omega$ and
\begin{equation}\label{Rellichestim2k}
\ds \int_{\Gamma_k} (\lambda^2 u_k^2  +  \left|\nabla_{\mathscr{T}}u_k\right|^2) 
 \leq  C(\Omega_k) \, \Big(\Vert h_k \Vert^2_{L^2(\Gamma_k)} +  \Vert u_k \Vert^2_{L^2(\Gamma_k)}\Big) \leq C(\Omega) \Vert h \Vert^2_{L^2(\Gamma)}.
\end{equation}
After that, we extend $u_k$ outside of $\Omega$, denoted by $\widetilde{u_k}$, satisfying
$\widetilde{u_k}\in H^1(\Omega)$  and $\Vert\widetilde{u_k}\Vert_{H^1(\Omega\setminus\Omega_k)}\rightarrow 0$. Because of the estimate  \eqref{estimnablauk}, that means that  
\begin{equation*}\label{ukbornedansH1}
(\widetilde{u_k})_k\quad \mathrm{ is \,\, bounded\, \, in\,  }\quad H^1(\Omega).
\end{equation*}

\noindent {\bf Step 3}.  {\it We always assume that $h\in H^{1/2}(\Gamma)$ and we will prove that $\widetilde{u_k}\rightarrow u$ in $H^1(\Omega)$}. 

We start by observe that for any $\varphi\in H^1(\Omega)$, we have
\begin{equation}\label{d05-100318-e6}
\begin{array}{rl}
&\ds\int_{\Omega} \lambda^2 (\widetilde{u_k}-u) \varphi + \nabla(\widetilde{u_k}-u)\cdot\nabla\varphi\\\\ 
&\ds =  \left[\int_{\Omega}\lambda^2 \widetilde{u_k} \varphi + \nabla \widetilde{u_k}\cdot\nabla\varphi
-\int_{\Omega_k} \lambda^2u_k\varphi + \nabla u_k\cdot\nabla\varphi\right]  + \\\\     
&+ \ds  \left[ \int_{\Omega_k} \lambda^2u_k\varphi  + \nabla u_k\cdot\nabla\varphi
  -\int_{\Omega}\lambda^2u \varphi + \nabla u\cdot\nabla\varphi \right]\\\\
& \ds=  \int_{\Omega\setminus\Omega_k} \lambda^2\widetilde{u_k} \varphi + \nabla \widetilde{u_k}\cdot\nabla\varphi + \int_{\Gamma_k}h_k \varphi-\int_{\Gamma}h\varphi.
\end{array}
\end{equation}
Note that 
$$
\begin{array}{rlr}
&\ds\left\vert\int_{\Gamma_k}h_k\varphi- \int_{\Gamma}h\varphi\right\vert^2\\\\
&\ds \ = \ \left\vert\int_{\R^{N-1}}h(\textit{\textbf x}',\xi(\textit{\textbf x}'))\left[\varphi(\textit{\textbf x}',\xi_k(\textit{\textbf x}'))-
\varphi(\textit{\textbf x}',\xi(\textit{\textbf x}'))\right]\theta(\textit{\textbf x}')d\textit{\textbf x}'\right\vert^2\\\\
&\ds \leq C \left|\left|\,h\,  \right|\right|^2_{L^{2}(\Gamma)} \Vert \xi-\xi_k\Vert_{L^\infty(\R^{N-1})}\Vert \varphi\Vert^2_{H^1(\Omega\setminus\Omega_k)}.
\end{array}
$$
We have used above the following inequality:  for any $z\in H^1(\Omega)$, we have
$$
\begin{array}{rl}
\ds \left \vert z(\textit{\textbf x}',\xi(\textit{\textbf x}'))-z(\textit{\textbf x}',\xi_k(\textit{\textbf x}'))\right \vert\ = \, \ds  \left|\int_{\xi_k(\textit{\textbf x}')}^{\xi(\textit{\textbf x}')}\frac{\partial z}{\partial x_N}(\textit{\textbf x}',x_N) dx_N\right\vert \\\\
\ds\leq |\xi(\textit{\textbf x}')-\xi_k(\textit{\textbf x}')|^{1/2}\Big(\int_{\xi_k(\textit{\textbf x}')}^{\xi(\textit{\textbf x}')} \left|\frac{\partial z}{\partial x_N}(\textit{\textbf x}',x_N)\right|^2 dx_N\Big)^{1/2}
\end{array}
$$
and then by Cauchy-Schwarz inequality, we deduce
\begin{equation*}\label{d05-100318-e1}
\ds \int_{\R^{N-1}}|z(\textit{\textbf x}',\xi(\textit{\textbf x}'))-z(\textit{\textbf x}',\xi_k(\textit{\textbf x}'))|^2d\textit{\textbf x}'\leq
C\Vert \xi-\xi_k\Vert_{L^\infty(\R^{N-1})}\Vert \nabla z\Vert^2_{L^2(\Omega\setminus\Omega_k)}.
\end{equation*}

Choosing  $\varphi=\widetilde{u_k}- u\in H^1(\Omega)$ in \eqref{d05-100318-e6} we obtain, according to the boundedness of $\widetilde{u_k}$ in $H^1(\Omega)$, the following inequality 
\begin{equation*}\label{d05-100318-e7}
\begin{array}{rl}
&\ds \lambda^2\Vert \widetilde{u_k} - u\Vert^2_{L^2(\Omega)} +  \Vert \nabla (\widetilde{u_k} - u)\Vert^2_{L^2(\Omega)}
\leq\\\\
 &\ds C \Big[\left|\left|\,h\,\right|\right|_{L^2(\Gamma)}\Vert \xi-\xi_k\Vert^{1/2}_{{L^{\infty}(\R^{N-1})}} \Vert \nabla (\widetilde{u_k} - u)\Vert_{L^2(\Omega)}\; + \\\\
&+\, \lambda^2 \Vert\widetilde{u_k}\Vert_{L^2(\Omega\setminus\Omega_k)}\Vert \widetilde{u_k} - u\Vert_{L^2(\Omega)}
+\; \Vert \nabla \widetilde{u_k}\Vert_{L^2(\Omega\setminus\Omega_k)}  \Vert \nabla( \widetilde{u_k} - u)\Vert_{L^2(\Omega \setminus\Omega_k )}\Big].
\end{array}
\end{equation*}
In other words, 
\begin{equation*}\label{d05-100318-e7b}
\begin{array}{rl}
&\ds \vert \lambda \vert\, \Vert \widetilde{u_k} - u\Vert_{L^2(\Omega)} +  \Vert \nabla (\widetilde{u_k} - u)\Vert_{L^2(\Omega)}\ds
\leq\ C \Big[\left|\left|\,h\,\right|\right|_{L^2(\Gamma)}\Vert \xi-\xi_k\Vert^{1/2}_{{L^{\infty}(\R^{N-1})}} \; + \\\\
&+\; \vert \lambda\vert \, \Vert\widetilde{u_k}\Vert_{L^2(\Omega\setminus\Omega_k)}\Vert
+\, \Vert \nabla \widetilde {u_k}\Vert_{L^2(\Omega\setminus\Omega_k)}  \Big]
\end{array}
\end{equation*}
and then we get the strong convergence
\begin{equation*}\label{strongconv}
\widetilde{u_k}\rightarrow u \quad \mathrm{ in  }\quad H^1(\Omega).
\end{equation*}

\noindent{\bf Step 4}. {\it We always assume that $h\in H^{1/2}(\Gamma)$ and we will prove the estimate \eqref{ab}}. 

For that, we set
$$
\psi_k (\textit{\textbf x}') =  u_k(\textit{\textbf x}', \xi_k(\textit{\textbf x}')),\quad \psi (\textit{\textbf x}') =  u(\textit{\textbf x}', \xi(\textit{\textbf x}')).
$$
By using Lebesgue dominated convergence theorem, it is easy to prove that
$$
\psi_k \longrightarrow\ \psi  \textrm{ in } L^2(\R^{N-1}),
$$
which means in particular that
\begin{equation}\label{convuL2k}
\Vert u_k\Vert_{L^2(\Gamma_{k})} \rightarrow \Vert u\Vert_{L^2(\Gamma)}.
\end{equation}
Then 
$$
\Vert \psi_k \Vert_{H^1(\R^{N-1})}\leq C\Vert u_k \Vert_{H^1(\Gamma_{k})}\leq C \left(\Vert u_k \Vert_{L^2(\Gamma_{k})} + \Vert \nabla_{\tau} u_k \Vert_{L^2(\Gamma_{k})} \right).  
$$
By the estimate \eqref{Rellichestim2k}, we deduce
$$
\Vert \psi_k \Vert_{H^1(\R^{N-1})}\leq C \left(\Vert u \Vert_{L^2(\Gamma)} + \Vert h \Vert_{L^2(\Gamma)} \right) \leq C \Vert h \Vert_{L^2(\Gamma)}. 
$$
Hence $\psi \in H^1(\R^{N-1})$ and $ \psi_k \rightharpoonup \psi$ in $H^1(\R^{N-1})$ with the estimate
$$
\Vert u \Vert_{H^1(\Gamma)} = \Vert \psi \Vert_{H^1(\R^{N-1})}\leq \liminf_{k}\Vert \psi_k \Vert_{H^1(\R^{N-1})}\leq C\Vert h \Vert_{L^2(\Gamma)}. 
$$
The required estimate is then consequence of \eqref{convuL2k} and \eqref{Rellichestim2k}.

\noindent{\bf Step 5}. {\it We finally assume that $h\in L^{2}(\Gamma)$ (with the average on $\Gamma$ equal to 0) and we will prove the estimate \eqref{ab}}. 

For that, let $h_m \in H^{1/2}(\Gamma)\cap L^2_0(\Gamma)$ such that  $h_m \rightarrow h$ in $L^2(\Gamma)$. Let $u_m\in H^2(\Omega)\cap L^2_0(\Omega)$ the unique solution of Problem $ (\mathcal{P}^H_N)$ satisfying the estimate
\begin{equation*}\label{estim1bis}
  \Vert \lambda u_m\Vert_{L^2(\Gamma)} + \Vert\nabla_{\mathscr{T}}u_m  \Vert_{L^2(\Gamma)} \leq C \Vert  h_m\Vert_{L^2(\Gamma)}. 
\end{equation*}
We  get the estimate \eqref{estim1} by passing to the limit in the above inequality.

\noindent{\bf 2. Case} $\int_{\Gamma} h \not= 0$.

The proof is very similar. The only change concerns the estimate \eqref{estimnablau} which now holds when $\vert\lambda\vert \ge \lambda_0 > 0$, with the constant $C$ depending on $\Omega$ and on $\lambda_0$. \end{proof}

\begin{remark}\upshape If $\Omega$ is the half-space $\R^N_+$, then an explicit solution to  Problem $ (\mathcal{P}^H_N)$ is given by:

\begin{equation*}\label{v-fourier1}
u({\textit{\textbf x}}',x_N) =\mathcal{F}^{-1}_{\boldsymbol{\xi}'}\Big(- \frac{1}{\sqrt {\lambda^2 +  |\boldsymbol{\xi}'|^2}}e^{-x_N\sqrt{\lambda^2 + |\boldsymbol{\xi}'|^2}} \hat{h}(\boldsymbol{\xi}')\Big),\ \ \ x_N > 0.
\end{equation*}

\noindent \textit{i)} By using Plancherel's Theorem, we have
\begin{equation*}\label{b13-e1}
\begin{array}{rl}
||\, u \,||^2_{L^2(\R^N_+)}&=\ds\int_0^\infty\int_{\R^{N-1}}
\frac{1} {\lambda^2 +  |\boldsymbol{\xi}'|^2}e^{-2x_N\sqrt{\lambda^2+|\boldsymbol{\xi}'|^2}} |\hat{h}(\boldsymbol{\xi}')|^2\,d \boldsymbol{\xi}'d x_N\\\\
&=\ds\frac{1}{2}\int_{\R^{N-1}}\frac{|\hat{h}(\boldsymbol{\xi}')|^2}{(\lambda^2+|\boldsymbol{\xi}'|^2)^{3/2}} d \boldsymbol{\xi}',
\end{array}
\end{equation*}
\textit{i.e.}
\begin{equation*}\label{3-theta1-demi}
\vert \lambda \vert^{3/2}||\, u\,||_{L^2(\R^N_+)}\leq \frac{1}{\sqrt 2}\Vert \,h\,\Vert_{L^{2}(\R^{N-1})}.
\end{equation*}
\textit{ii)} Moreover, we have
\begin{equation*}\label{v-fourier3}
\begin{array}{rl}
||\, \nabla u\,||^2_{L^2(\R^N_+)}&=\ds\ds\int_0^\infty\int_{\R^{N-1}}\left(\vert \boldsymbol{\xi}'|^2|\mathcal{F}_{\boldsymbol{\xi}'}u(\boldsymbol{\xi}',x_N)|^2+\left| \ds\frac{\partial\mathcal{F}_{\textit{\textbf x}'}u}{\partial x_N} \right|^2\right)\,d \boldsymbol{\xi}' d x_N\\\\
&=\ds\ds\int_0^\infty\int_{\R^{N-1}}\left(\frac{|\boldsymbol{\xi}'|^2}{(\lambda^2+|\boldsymbol{\xi}'|^2} + 1\right)e^{-2x_N\sqrt{\lambda^2+|\boldsymbol{\xi}'|^2}} |\hat{h}(\boldsymbol{\xi}')|^2 \,d \boldsymbol{\xi}' d x_N\\\\
&=\frac{1}{2}\ds\int_{\R^{N-1}}\frac{\lambda^2 + 2|\boldsymbol{\xi}'|^2}{(\lambda^2+ |\boldsymbol{\xi}'|^2)^{3/2}}  \vert\hat{h}(\boldsymbol{\xi}')\vert^2 \,d \boldsymbol{\xi}' \\\\
&\leq \vert \lambda \vert^{-1}||\,h\,||^2_{L^{2}(\R^{N-1})}.
\end{array}
\end{equation*}
\textit{iii)} We verify easily that 
$$
\vert \lambda \vert\, \left\| u\right\|_{L^2(\R^{N-1})} + \sum_{j= 1}^{N-1} \left\| \frac{\partial u}{\partial x_j} \right\|_{L^2(\R^{N-1})}\leq 2\Vert \,h\,\Vert_{L^{2}(\R^{N-1})}.
$$ 

\noindent\textit{iv)} As
$$
||\, \nabla^2 u\,||_{L^2(\R^N_+)} \leq C \left( ||\,h\,||_{H^{1/2}(\R^{N-1})} + \vert \lambda  \vert \; ||\,h\,||_{L^{2}(\R^{N-1})}\right),
$$
by using an interpolation argument, we can prove that the semi-norm in $H^{3/2}(\R^N_+) $ of $u$ satisfies:
$$
\vert u \vert_{H^{3/2}(\R^N_+)}  \leq C_N ||\,h\,||_{L^{2}(\R^{N-1})}.
$$
\noindent\textit{v)} As above, we can show that
\begin{equation*}
\vert\lambda\vert \, \Vert \sqrt x_N \nabla u\Vert_{L^2(\R^N_+)} + \; \Vert \sqrt x_N  \nabla^2 u\Vert_{L^2(\\R^N_+)} \leq C_N \Vert  h\Vert_{L^2(R^{N-1})}. 
\end{equation*}

\end{remark}

\begin{remark}  \upshape
When $\Omega$ is $\mathcal{C}^{1,1}$, the proof can be simplified as follows. Let $(h_m)_m\subset H^{1/2}(\Gamma)$ be such that  $h_m \rightarrow h$ in $L^2(\Gamma)$ as $m\rightarrow \infty$. Let $u_m\in H^2(\Omega)$ be the unique solution of Problem $ (\mathcal{P}^H_N)$ with $ \lambda = \lambda_m$. Clearly the sequence $(u_m)_m$ is bounded in $H^1(\Omega)$ and then converges to $u$ weakly in $H^1(\Omega)$. Moreover 
$$
\frac{\partial u_m}{\partial\textit{\textbf n}} \rightharpoonup \frac{\partial u}{\partial\textit{\textbf n}} \quad \mathrm{in} \quad H^{-1/2}(\Gamma)
$$
where $u$ satisfies $ (\mathcal{P}^H_N)$. As $u_m$ satisfies the estimate 
\begin{equation*}\label{estim1bisBi}
  \Vert \lambda u_m\Vert_{L^2(\Gamma)} + \Vert\nabla_{\mathscr{T}}u_m  \Vert_{L^2(\Gamma)} \leq C \Vert  h_m\Vert_{L^2(\Gamma)} 
\end{equation*}
we get our result by passing to the limit in the above estimate.
\end{remark}

\begin{remark}\upshape  If $\int_{\Gamma} h \not= 0$, there is not possible to obtain the estimate \eqref{estim1} for small values of $\vert\lambda\vert$. Suppose the contrary, then we have in particular
$$
 \vert \lambda \vert\, (\Vert \nabla u\Vert_{L^2(\Omega)} +  \Vert u\Vert_{L^2(\Gamma)} )\leq C \Vert h \Vert_{L^2(\Gamma)}.
$$
We know that the solution of  Problem $ (\mathcal{P}^H_N)$ satisfies the relation 
$$
\int_{\Omega} u = \frac{1}{\lambda^2} \int_{\Gamma} h
$$
and so we have 
$$
 \Vert u\Vert_{L^2(\Omega)} \geq \frac{C(\Omega)}{ \lambda^2}\left| \int_{\Gamma} h\right|.
 $$
As 
 $$
 \Vert u\Vert_{L^2(\Omega)} \leq C (\Omega) \left(\Vert \nabla u\Vert_{L^2(\Omega)} + \Vert u\Vert_{L^2(\Gamma)} \right),
 $$
we can then deduce the following inequality: for any small values of $\vert\lambda\vert$
$$
\left|\int_{\Gamma} h\right|  \leq C(\Omega) \vert\lambda \vert \, \Vert h \Vert_{L^2(\Gamma)},
$$
which means that $\int_{\Gamma} h = 0$.

\end{remark}

We consider now the same question for the Dirichlet problem $(\mathcal{P}_D)$.

\begin{theorem}\label{th2} $\mathrm{(}${\bf Resolvent Boundary Estimates for} $(\mathcal{P}_D^H) \mathrm{)}$ Let $\Omega$ be a Lipschitz bounded open subset  of $\R^N$, $g\in H^1(\Gamma)$ and suppose $\lambda \in \R^*$. Then Problem $(\mathcal{P}_D^H)$ has a unique solution which satisfies 
\begin{equation*}\label{prosolPDH}
u\in H^{3/2}(\Omega),\quad \sqrt \rho \,  \nabla^2 u \in L^2(\Omega), \quad \frac{\partial u}{\partial\textit{\textbf n}}\in L^2(\Gamma),
\end{equation*}
with the following estimate:
\begin{equation}\label{estim3}
\begin{array}{rl}
\vert \lambda \vert^{3/2} \,  \Vert u\Vert_{L^2(\Omega)} & + \; \vert\lambda\vert \, \Vert \sqrt \rho \nabla u\Vert_{L^2(\Omega)}  +\;   \vert \lambda \vert^{1/2}  \Vert  u\Vert_{H^1(\Omega)} + \Vert u\Vert_{H^{3/2}(\Omega)} + \\\\
&   + \; \Vert \sqrt \rho  \nabla^2 u\Vert_{L^2(\Omega)} + \left\| \frac{\partial u}{\partial \textbf{\textit n}}\right\|_{L^2(\Gamma)} \leq C(\Omega) \left( \vert \lambda \vert \; \Vert  g \Vert_{L^2(\Gamma)}  + \Vert g \Vert_{H^1(\Gamma)} \right).
\end{array}
\end{equation}
\end{theorem}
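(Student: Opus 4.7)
The plan is to mirror the proof of Theorem~\ref{th1}. Existence and uniqueness of a variational solution $u\in H^1(\Omega)$ follow by writing $u=v+G$, where $G$ is the harmonic lifting of $g$ (whose estimates come from Theorem~\ref{PDH, lambda=0}) and $v\in H^1_0(\Omega)$ is produced by Lax--Milgram applied to the coercive bilinear form $\int_\Omega(\nabla v\cdot\nabla w+\lambda^2 vw)$; the additional regularity $u\in H^{3/2}(\Omega)$ with $\sqrt d\,\nabla^2 u\in L^2(\Omega)$ and $\partial u/\partial\textit{\textbf n}\in L^2(\Gamma)$ then follows from Theorem~\ref{Necas Property} (ii) once the membership $\partial u/\partial\textit{\textbf n}\in L^2(\Gamma)$ is established.

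For the quantitative bound \eqref{estim3} I would begin with the case $\Omega\in\mathcal{C}^{1,1}$ and data smooth enough that $u\in H^2(\Omega)$. The basic identity is the energy relation $\lambda^2\|u\|_{L^2(\Omega)}^2+\|\nabla u\|_{L^2(\Omega)}^2=\int_\Gamma g\,\partial u/\partial\textit{\textbf n}$. Applying the Rellich identity \eqref{Rellich} with a field $\textit{\textbf h}\in\mathcal{C}^\infty(\overline{\Omega})$ satisfying $\textit{\textbf h}\cdot\textit{\textbf n}\ge\alpha>0$ on $\Gamma$, using $u|_\Gamma=g$ so that $\nabla_{\mathscr{T}}u=\nabla_{\mathscr{T}}g$ there, substituting $\Delta u=\lambda^2 u$ and rewriting $2\int_\Omega(\textit{\textbf h}\cdot\nabla u)\lambda^2 u=-\lambda^2\int_\Omega u^2\,\mathrm{div}\,\textit{\textbf h}+\lambda^2\int_\Gamma g^2\,\textit{\textbf h}\cdot\textit{\textbf n}$, and absorbing the cross term $\textit{\textbf h}_{\mathscr{T}}\cdot\nabla g\,\partial u/\partial\textit{\textbf n}$ via Young's inequality, one obtains $\|\partial u/\partial\textit{\textbf n}\|_{L^2(\Gamma)}^2\le C\bigl(\|g\|_{H^1(\Gamma)}^2+\lambda^2\|g\|_{L^2(\Gamma)}^2+\lambda^2\|u\|_{L^2(\Omega)}^2+\|\nabla u\|_{L^2(\Omega)}^2\bigr)$.

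Set $M:=|\lambda|\|g\|_{L^2(\Gamma)}+\|g\|_{H^1(\Gamma)}$. Combining the two identities through Cauchy--Schwarz on $\int_\Gamma g\,\partial u/\partial\textit{\textbf n}$ and absorption yields first $\|\partial u/\partial\textit{\textbf n}\|_{L^2(\Gamma)}\le CM$; then splitting $|\lambda|\le 1$ versus $|\lambda|\ge 1$ through the elementary inequality $\|g\|_{L^2(\Gamma)}\le\min(\|g\|_{H^1(\Gamma)},M/|\lambda|)$ gives the $\lambda$-weighted estimates $|\lambda|^{3/2}\|u\|_{L^2(\Omega)}+|\lambda|^{1/2}\|\nabla u\|_{L^2(\Omega)}+\|\nabla u\|_{L^2(\Omega)}\le CM$. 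Since $\|u\|_{L^2(\Omega)}\le C\bigl(\|\nabla u\|_{L^2(\Omega)}+\|u\|_{L^2(\Gamma)}\bigr)$ by the trace--Poincar\'e inequality, this also furnishes $\|u\|_{H^1(\Omega)}\le CM$. Testing $\lambda^2 u-\Delta u=0$ against $-d\Delta u$ (with $d|_\Gamma=0$ killing boundary contributions), exactly as in \eqref{energieavecd}--\eqref{estimabd} of the proof of Theorem~\ref{th1}, produces $|\lambda|\,\|\sqrt d\,\nabla u\|_{L^2(\Omega)}+\|\sqrt d\,\Delta u\|_{L^2(\Omega)}\le CM$, and the hidden-regularity estimate \eqref{estimabe} closes the chain with $\|u\|_{H^{3/2}(\Omega)}+\|\sqrt d\,\nabla^2 u\|_{L^2(\Omega)}\le CM$.

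The principal obstacle is the passage from the $\mathcal{C}^{1,1}$ case to a general Lipschitz $\Omega$. I would follow Steps~2--5 of the proof of Theorem~\ref{th1}: approximate $\Omega$ from the inside by smooth $\Omega_k$ as in \cite{ACM} with uniformly Lipschitz parametrizations $\xi_k\to\xi$, transfer $g$ to $\Gamma_k$ via a lifting of the form $g_k(\textit{\textbf x}',\xi_k(\textit{\textbf x}'))=\chi(x_N)\,g(\textit{\textbf x}',\xi(\textit{\textbf x}'))$ that satisfies $\|g_k\|_{L^2(\Gamma_k)}+\|g_k\|_{H^1(\Gamma_k)}\le C\,M$ uniformly in $k$, apply the smooth-domain estimate on each $\Omega_k$ with constant depending only on the common Lipschitz character, extend $u_k$ to $\Omega$ and show $\widetilde{u_k}\to u$ strongly in $H^1(\Omega)$ by the dominated-convergence argument already used in the Neumann case, and finally recover the boundary estimate on $\Gamma$ by weak lower semicontinuity of the $H^1(\mathbb{R}^{N-1})$ norm applied to the traces $\psi_k(\textit{\textbf x}')=u_k(\textit{\textbf x}',\xi_k(\textit{\textbf x}'))$ and to $\partial u_k/\partial\textit{\textbf n}_k$.
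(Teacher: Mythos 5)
Your proposal follows essentially the same route as the paper's proof: the Rellich identity combined with the energy relation in the $\mathcal{C}^{1,1}$ case to bound $\partial u/\partial\textit{\textbf n}$ in $L^2(\Gamma)$ by $M=|\lambda|\,\|g\|_{L^2(\Gamma)}+\|g\|_{H^1(\Gamma)}$, then interior approximation by smooth domains for the general Lipschitz case, then density in the datum $g$. Two remarks. First, in the approximation step you transport $g$ from $\Gamma$ to $\Gamma_k$ exactly as is done for $h_k$ in the Neumann case; the paper instead takes $g_k$ to be (a cut-off of) the trace of the solution $u$ itself on $\Gamma_k$, namely $g_k(\textit{\textbf x})=\chi(x_N)\,u(\textit{\textbf x}',\xi_k(\textit{\textbf x}'))$. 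This is not cosmetic: the strong convergence $\widetilde{u_k}\to u$ in $H^1(\Omega)$ is obtained by testing with $\varphi=\widetilde{u_k}-u$, whose restriction to $\Omega_k$ must lie in $H^1_0(\Omega_k)$ so that the boundary term $\int_{\Gamma_k}\frac{\partial u_k}{\partial\textit{\textbf n}_k}\varphi$ drops out (see \eqref{estimuktile-u}); with your choice of $g_k$ this term survives and you would need an extra argument (e.g.\ the uniform $L^2(\Gamma_k)$ bound on $\partial u_k/\partial\textit{\textbf n}_k$ from Step 1 together with $\|u-g_k\|_{L^2(\Gamma_k)}\to 0$) to control it, so as written this step does not quite close. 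Second, once \eqref{RellichD} is in hand the paper simply feeds $h=\partial u/\partial\textit{\textbf n}$ into the Neumann estimate \eqref{estim1} of Theorem~\ref{th1} to produce the interior terms of \eqref{estim3}, whereas you rederive them directly from the energy identity, the multiplier $-d\Delta u$, and \eqref{estimabe}; your version is self-contained and sidesteps the side condition $\int_\Gamma h=0$ (or $|\lambda|\ge\lambda_0$) required in Theorem~\ref{th1}, so it is if anything slightly cleaner. Apart from the $g_k$ issue the argument is sound and the exponent bookkeeping matches \eqref{estim3}.
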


\begin{proof} Clearly, Problem $(\mathcal{P}_D^H)$ has a unique solution $u\in H^1(\Omega)$. By Theorem \ref{Necas Property}, we know that $\frac{\partial u}{\partial \textbf{\textit n}}\in L^2(\Gamma)$ and by Theorem \ref{PDH, lambda=0} that  $u\in H^{3/2}(\Omega)$ and $  \nabla^2 u \in L^2(\Omega)$. Moreover, multiplying by $u$, we get the following relation
\begin{equation*}\label{egaliteenergiebis}
\lambda^2 \Vert u\Vert^2_{L^2(\Omega)} + \Vert \nabla u\Vert^2_{L^2(\Omega)} = \int_{\Gamma} g\frac{\partial u}{\partial \textbf{\textit n}}.
\end{equation*}

To prove the estimate \eqref{estim3}, we need six steps.

\noindent{\bf Step 1}. {\it We first suppose that $\Omega$ is $\mathcal{C}^{1,1}$ and $g\in H^{3/2}(\Gamma)$}.  \medskip

Then we know that the solution  $u\in H^2(\Omega)$ and thanks to the Rellich identity \eqref{RellichBi} and \eqref{egaliteenergiebis}, we have the following estimate:

\begin{equation*}
\ds \int_\Gamma \left|\frac{\partial u}{\partial \textbf{\textit n}}\right|^2 \leq C(\Omega)\left( \ds \int_\Gamma (\lambda^2 g^2  + \left|\nabla_{\mathscr{T}}g\right|^2) + (\Vert g \Vert_{L^2(\Gamma)}  +\Vert \nabla_{\mathscr{T}} g \Vert_{L^2(\Gamma)}) \left\| \frac{\partial u}{\partial \textbf{\textit n}}\right\|_{L^2(\Gamma)} \right),
\end{equation*}\\
from which we deduce that
\begin{equation}\label{RellichD}
\left\| \frac{\partial u}{\partial \textbf{\textit n}}\right\|_{L^2(\Gamma)} \leq C(\Omega)\left(\vert \lambda \vert\Vert g \Vert_{L^2(\Gamma)} + \Vert g\Vert_{H^1(\Gamma)} \right),
\end{equation}
where the constant $C(\Omega)$ depends only on the Lipschitz character of $\Omega$. 
%Using then \eqref{egaliteenergiebis} and \eqref{RellichD}, we get
%\begin{equation}\label{ineglamdauL2Om}
%\vert \lambda \vert^2\left|\left|\,   u\,\right|\right|^2_{L^2(\Omega)} \leq C(\Omega)\Vert g \Vert_{L^2(\Gamma)}\Big(\vert \lambda \vert\Vert g \Vert_{L^2(\Gamma)} + \Vert g\Vert_{H^1(\Gamma)}  \Big).
%\end{equation}

 \noindent{\bf Step 2}. {\it We now do not suppose that  $\Omega$ is $\mathcal{C}^{1,1}$ but we assume that $g\in H^{3/2}(\Gamma)$}. 
 \medskip 
 
 To prove the estimate \eqref{RellichD} for $\Omega$ only  Lipschitz, it suffices to consider again, as in the proof of Theorem \ref{th1}, $\Omega$ of the form
$$
\Omega\ = \ \left\{\,(\textit{\textbf x}',x_N)\in\R^N;\ \ x_N<\xi(\textit{\textbf x}') \right\},
$$
where $\xi\in \mathcal{C}^{1,1}(\R^{N-1})$ with supp $\xi$ compact.   

Let us consider now the following function:
\begin{equation}\label{condlim}
\quad\textit{\textbf x}=(\textit{\textbf x}',x_N)\in \Omega;\quad g_k(\textit{\textbf x})= \chi(x_N) u(\textit{\textbf x}',\xi_k(\textit{\textbf x}'))
\end{equation}

where $\chi\in \mathcal{D}(\R)$ and $\chi=1$ in a neighbourhood  of $[\mathrm{min}\,\xi,\mathrm{max}\,\xi]$. As $\Delta u\in L^2(\Omega)$, then $u\in H^2_{loc}(\Omega)$ and $g_k$ belongs to $ H^2(\Omega)$ with the estimate
$$ 
\left|\left|\,  g_k\,\right|\right|_{H^1(\Omega)} + \left|\left|\,  g_k\,\right|\right|_{H^1(\Gamma_k)}\ \leq\ C\left|\left|\,  u\,\right|\right|_{H^1(\Gamma)}\, \leq\ C\left|\left|\,  g\,\right|\right|_{H^1(\Gamma)},
$$
where $C$ does not depend on $k$. Because the regularity of $\Omega_k$, there exists a unique solution $u_k\in H^2(\Omega_k)$ satisfying
\begin{equation*}\label{d03-080318-ce1}
\lambda^2 u_k -\Delta u_k  = 0\ \textrm{ in }\ \Omega_k\quad \mathrm{and}\quad  u_k = g_k \textrm{ on } \Gamma_k.
\end{equation*}
Multiplying by $u_k$ and using  the estimate \eqref{RellichD}, we get
\begin{equation*}
 \begin{array}{rl}
\vert \lambda \vert^2\left|\left|\,   u_k\,\right|\right|^2_{L^2(\Omega_k)} + \left|\left|\,   \nabla u_k\,\right|\right|^2_{L^2(\Omega_k)}&\leq \ds \left|\left|\,\frac{\partial u_k}{\partial \textit{\textbf n}_k}\,\right|\right|_{H^1(\Gamma_k)} \Vert g_k \Vert_{L^2(\Gamma_k)}\\\\
& \leq C(\Omega_k)(\vert \lambda \vert \, \Vert g_k \Vert_{L^2(\Gamma_k)} + \Vert g_k \Vert_{H^1(\Gamma_k)})\Vert g_k \Vert_{L^2(\Gamma_k)}\\\\
& \leq C(\Omega) \left(\vert \lambda \vert \, \Vert g \Vert^2_{L^2(\Gamma)} + \Vert g \Vert^2_{H^1(\Gamma)} \right)
 \end{array}
 \end{equation*}
where $C(\Omega)$ does not depend on $k$ and depends only on the Lipschitz character of $\Omega$. Set now 
$$
\widetilde{u_k} = \begin{cases}  u_k\quad \mathrm{in}\; \Omega_k,\\
g_k \quad  \mathrm{in}\; \Omega\setminus \Omega_k.
\end{cases}
$$
It is clear that $\widetilde{u_k} \in H^1(\Omega)$ and
\begin{equation}\label{bounded}
(\widetilde{u_k})_k \quad \mathrm{is\, \, bounded \, \,  in }\quad  H^1(\Omega).
\end{equation}
 
 \noindent{\bf Step 3}.  {\it We always assume that $g\in H^{3/2}(\Gamma)$ and we will prove that $\widetilde{u_k}\rightarrow u$ in $H^1(\Omega)$}.   

For any $\varphi\in H^1(\Omega)$, we have
\begin{equation}\label{d04-090318-e5}
\ds\int_{\Omega_k} u_k \varphi + \nabla u_k\cdot\nabla\varphi\ =\ \ds\int_{\Gamma_k}\frac{\partial u_k}{\partial\textit{\textbf n}_k}\varphi 
\end{equation}
and
\begin{equation}\label{d04-090318-e6}
\ds\int_{\Omega}u \varphi + \nabla u\cdot\nabla\varphi\ =\ \ds \int_{\Gamma}\frac{\partial u}{\partial\textit{\textbf n}}\varphi.
\end{equation}
Consequently, we have for any $\varphi\in H^1(\Omega)$
\begin{equation*}\label{FormGreenpart}
\begin{array}{rl}
&\ds\int_{\Omega} (\widetilde{u_k}-u) \varphi + \nabla(\widetilde{u_k}-u)\cdot\nabla\varphi\\\\ 
&\ds =  \left[\int_{\Omega} (\widetilde{u_k} \varphi + \nabla \widetilde{u_k}\cdot\nabla\varphi)
-\int_{\Omega_k} (u_k\varphi + \nabla u_k\cdot\nabla\varphi)\right]  + \\\\     
&+ \ds  \left[ \int_{\Omega_k} (u_k\varphi  + \nabla u_k\cdot\nabla\varphi)
  -\int_{\Omega}(u \varphi + \nabla u\cdot\nabla\varphi) \right]\\\\
&\ds=  \int_{\Omega\setminus\Omega_k} (\widetilde{u_k} \varphi + \nabla \widetilde{u_k}\cdot\nabla\varphi)  + \,\int_{\Gamma_k}\frac{\partial u_k}{\partial\textit{\textbf n}_k}\varphi  - \int_{\Gamma}\frac{\partial u}{\partial\textit{\textbf n}}\varphi\,   .
\end{array}
\end{equation*}
So taking $\varphi =  \widetilde{u_k} - u$ and note that its restriction to $\Omega_k$ belongs to $ H^1_0(\Omega_k)$, so we deduce that
\begin{equation}\label{estimuktile-u}
\Vert  \widetilde{u_k} - u\Vert^2_{H^1(\Omega)} \leq \,  \ds C\Vert  \widetilde{u_k} - u\Vert_{H^1(\Omega)} \Vert  \widetilde{u_k} \Vert_{H^1(\Omega\setminus\Omega_k)}   +  \left|\left|\,\frac{\partial u}{\partial \textit{\textbf n}}\,\right|\right|_{L^2(\Gamma)} \Vert  \widetilde{u_k} - u\Vert_{L^2(\Gamma)} .
\end{equation}
But
\begin{equation*}\label{InegL2Gammak}
\begin{array}{rl}
\!\!\!\!\Vert \widetilde{u_k} - u\Vert^2_{L^2(\Gamma)}\!\!\!\! & = \ds \int_{\R^{N-1}}\vert u(\textit{\textbf x}',\xi_k(\textit{\textbf x}')) - u(\textit{\textbf x}',\xi(\textit{\textbf x}')\vert^2 \theta(\textit{\textbf x}') d\textit{\textbf x}'\\\\
\!\!\!\! & \leq C \ds\int_{\R^{N-1}} (\xi(\textit{\textbf x}') - \xi_k(\textit{\textbf x}') )    \int_{\xi_k(\textit{\textbf x}')}^{\xi(\textit{\textbf x}')} \left|\frac{\partial u}{\partial x_N}(\textit{\textbf x}',x_N)\right|^2 dx_N  d\textit{\textbf x}'\\\\
\!\!\!\! &\leq C  \Vert \xi-\xi_k\Vert_{L^\infty(\R^{N-1})}\Vert u \Vert^2_{H^1(\Omega\setminus\Omega_k)}. 
\end{array}
\end{equation*}
Using \eqref{bounded}, \eqref{estimuktile-u} and the convergence to 0 of the norm $ \Vert  \widetilde{u_k} \Vert_{H^1(\Omega\setminus\Omega_k)}$, we deduce the strong convergence in $H^1(\Omega)$ of $\widetilde{u_k} $ to $u$ as claimed.

\medskip

\noindent {\bf Step 4}. {\it We always assume that $g\in H^{3/2}(\Gamma)$ and we will prove the estimate \eqref{RellichD}}.

We start by remark that the following sequence
 $$
 \psi_k(\textit{\textbf x}')\ = \ \nabla u_k\cdot\textit{\textbf n}_k(\textit{\textbf x}',\xi_k(\textit{\textbf x}'))
 $$
is bounded in $L^2(\R^{N-1})$ since
$$
\ds\Vert \psi_k\Vert_{L^2(\R^{N-1})}\ \leq C \left\|\frac{\partial u_k}{\partial n_k}\right\|_{L^2(\Gamma_k)},
$$
where $C$ does not depend on $k$ because $(\xi_k)_k$ is bounded in $W^{1,\infty}(\R^{N-1})$  and from Step 1 we know that
\begin{equation}\label{estimI}
\left\|\frac{\partial u_k}{\partial n_k}\right\|_{L^2(\Gamma_k)} \leq C(\Omega_k)\Big(\vert \lambda\,  \vert\Vert g_k \Vert_{L^2(\Gamma)} + \Vert g_k\Vert_{H^1(\Gamma)} \Big) \leq C(\Omega)\Big(\vert \lambda \,  \vert\Vert g \Vert_{L^2(\Gamma)} + \Vert g\Vert_{H^1(\Gamma)} \Big).
\end{equation}
So, after passing to a subsequence, we can assume that
$$
\psi_k\ \rightharpoonup\ \psi \ \ \textrm{ in } L^2(\R^{N-1}).
$$
Define now 
$$
\widetilde\psi(\textit{\textbf x}',\xi(\textit{\textbf x}'))=\psi(\textit{\textbf x}')
$$
and let $\varphi\in H^1(\Omega)$. Then as $\theta_k\ \rightarrow\ \theta$ in $L^2(\R^{N-1})$, we have
\begin{equation*}\label{d04-090318-e4}
\begin{array}{rl}
\ds\int_{\Gamma_k}\frac{\partial u_k}{\partial\textit{\textbf n}_k}\varphi\ &\ds =\ \int_{\R^{N-1}}\psi_k(\textit{\textbf x}')\varphi(\textit{\textbf x}',\xi_k(\textit{\textbf x}'))\theta_k(\textit{\textbf x}')\, d\textit{\textbf x}'\\\\
&\ds \longrightarrow \int_{\R^{N-1}}\psi(\textit{\textbf x}')\varphi(\textit{\textbf x}',\xi(\textit{\textbf x}'))\theta(\textit{\textbf x}')\, d\textit{\textbf x}'\ =\ \int_\Gamma\widetilde\psi\varphi\quad \mathrm{as}\; k\rightarrow \infty.
\end{array}
\end{equation*}
Here, note that 
\begin{equation*}
\begin{array}{rl}
\Vert \widetilde\psi \Vert_{L^2(\Gamma)} \leq C \Vert \psi \Vert_{L^2(\R^{N-1})} &\leq C(\Omega) \liminf_{k\rightarrow \infty} \Vert \psi_k \Vert_{L^2(\R^{N-1})}\\
& \leq C(\Omega)\Big(\vert \lambda \vert \, \Vert g \Vert_{L^2(\Gamma)} + \Vert g\Vert_{H^1(\Gamma)}  \Big),
\end{array}
\end{equation*}
where we used the estimate \eqref{estimI}.

\medskip
 
Sending $k \rightarrow \infty$ in \eqref{d04-090318-e5} gives
\begin{equation*}\label{d04-090318-e10}
\ds\int_{\Omega}u\varphi + \nabla u\cdot\nabla\varphi\ =\ \ds \int_\Gamma\widetilde\psi\varphi
\end{equation*}
and thanks to \eqref{d04-090318-e6}, we get $\ds\frac{\partial u}{\partial\textit{\textbf n}}=\widetilde\psi$ belonging $L^2(\Gamma)$ and the required estimate \eqref{RellichD}.

\noindent{\bf Step 5}. {\it We assume that $g\in H^{1}(\Gamma)$ and we will prove the estimate \eqref{RellichD}}. 

For that, let $g_m \in H^{3/2}(\Gamma)$ such that  $g_m \rightarrow g$ in $H^1(\Gamma)$. Let $u_m\in H^{3/2}(\Omega)$ the unique solution of Problem $ (\mathcal{P}^H_D)$, with the Dirichlet boundary condition $u_m = g_m$ on $\Gamma$. From above we deduce the following estimate
\begin{equation*}\label{RellichDm}
\left\| \frac{\partial u_m}{\partial \textbf{\textit n}}\right\|_{L^2(\Gamma)} \leq C(\Omega)\Big(\vert \lambda \vert\Vert g_m \Vert_{L^2(\Gamma)} + \Vert g_m\Vert_{H^1(\Gamma)} \Big) \leq C(\Omega)\Big(\vert \lambda \vert\, \Vert g \Vert_{L^2(\Gamma)} + \Vert g\Vert_{H^1(\Gamma)} \Big).
\end{equation*}
We  get the estimate \eqref{RellichD} by passing to the limit.

 \noindent{\bf Step 6}. {\it We finally prove the estimate \eqref{estim3}}

This estimate is an immediate consequence of \eqref{estim1} and \eqref{RellichD}. \end{proof}

\begin{remark}\upshape In the case where $\Omega$ is the half-space $\R^N_+$, then an explicit solution to  Problem $ (\mathcal{P}^H_N)$ is given by:

\begin{equation*}\label{v-fourier1bis}
u({\textit{\textbf x}}',x_N) =\mathcal{F}^{-1}_{\boldsymbol{\xi}'}\Big(e^{-x_N\sqrt{\lambda^2 + |\boldsymbol{\xi}'|^2}} \hat{g}(\boldsymbol{\xi}')\Big),\ \ \ x_N > 0.
\end{equation*}
{\bf i)} Simple calculations give
\begin{equation*}
||\, u \,||^2_{L^2(\R^N_+)} =\ds\int_0^\infty\int_{\R^{N-1}}
\frac{1} {2\sqrt{\lambda^2 +  |\boldsymbol{\xi}'|^2}} \vert\hat{g}(\boldsymbol{\xi}')\vert^2\,d \boldsymbol{\xi}'d x_N
\end{equation*}
and so the estimate
\begin{equation*}\label{estimavecgL2Gam}
\vert \lambda \vert^{1/2}||\, u\,||_{L^2(\R^N_+)}\leq \frac{1}{\sqrt 2}\Vert \,g\,\Vert_{L^{2}(\R^{N-1})}.
\end{equation*} 
Likewise
\begin{equation*}
||\, \nabla u\,||^2_{L^2(\R^N_+)}= \frac{1}{2}\ds\int_{\R^{N-1}}\big(\vert \boldsymbol{\xi}'\vert^2  + \lambda^2\big)^{1/2}\vert \hat{g}(\boldsymbol{\xi}')\vert^2 
+  \frac{1}{2}\ \int_{\R^{N-1}}\frac{\vert \boldsymbol{\xi}' \vert^2} {\vert \boldsymbol{\xi}'\vert^2  + \lambda^2)^{1/2} }\vert \hat{g}(\boldsymbol{\xi}')\vert^2 
\end{equation*} 
and then
\begin{equation*}
\begin{array}{rl}
||\, \nabla u\,||^2_{L^2(\R^N_+)} &\leq \frac{1}{2}\left(\vert g\vert^2_{H^{1/2}(\R^{N-1})} + \vert \lambda\vert \, \Vert g\Vert^2_{L^{2}(\R^{N-1})}  +\vert \lambda\vert^{-1}  \vert g\vert^2_{H^{1}(\R^{N-1})}\right)\\\\
&\leq C \left( \vert \lambda\vert \, \Vert g\Vert^2_{L^{2}(\R^{N-1})}  +\vert \lambda\vert^{-1}  \Vert g\Vert^2_{H^{1}(\R^{N-1})}\right),
\end{array}
\end{equation*} 
where the notation $\vert \cdot \vert_{H^{s}(\R^{N-1})}$ denotes the semi-norm in $H^{s}(\R^{N-1})$.\\
The third estimate is the following:
\begin{equation*}
\left\|\frac{\partial u}{\partial x_N}\right\|^2_{L^2(\R^{N-1})} = \ds\int_{\R^{N-1}}(\vert \lambda\vert^2+|\boldsymbol{\xi}'|^2) |\hat{g}(\boldsymbol{\xi}')|^2d \boldsymbol{\xi}',
\end{equation*}
\textit{i.e},
\begin{equation*}\label{3-theta1-demibis}
\left\|\frac{\partial u}{\partial x_N}\right\|^2_{L^2(\R^{N-1})} = \vert \lambda\vert^2 \Vert\, g  \,\Vert^2_{L^2(\R^{N-1})} + \vert g \vert^2_{H^1(\R^{N-1})}.
\end{equation*}
With these relations, we can see that the estimates obtained in Theorem \ref{th2} are optimal.

\noindent{\bf ii)} In \eqref{estim3}, the estimate  is with respect the norm of $g$ in $H^1(\Gamma)$. We will see later $\mathrm{(}$see Theorem \ref{th2Bii}$\mathrm{)}$ a very weak estimate  with respect the norm of $g$ in $L^2(\Gamma)$ like the estimate \eqref{estimavecgL2Gam} above.

\end{remark}

\begin{remark} \label{rellich_id}\upshape
For the validity of the Rellich identity in Lipschitz domains with minimal regularity of the solution and which is contained in Ne\v{c}as's book \cite{Necas} but we note that the given proof is very incomplete and difficult to read. On the other hand we corrected some misprints in the proof given by 
McLean in \cite{McLean} (Theorem 4.24). Unlike our choice for the boundary condition for approximate solutions $\mathrm{(}$see \eqref{condlim}$\mathrm{)}$, Mclean has considered the following boundary condition 
\begin{equation}\label{condlimbis}
\quad\textit{\textbf x}=(\textit{\textbf x}',x_N)\in \Omega;\quad g_k(\textit{\textbf x})= u(\textit{\textbf x}',\xi(\textit{\textbf x}'))
\end{equation}
which unfortunately does not provide the $H^2$-regularity needed for his proof $\mathrm{(}$see page 151, line 3 and also the erratum (number 151) given in the followink link\\ https://web.maths.unsw.edu.au/~mclean/errata.pdf
\end{remark}

\section{The resolvent estimates. Complex Case} \label{sec3}
\setcounter{equation}{0}

We now are interested in the case $\lambda \in \mathbb{C},$ Re $\lambda > 0$.  

\subsection{Neumann Case}

Clearly, as for Theorem \ref{th1}, the following existence result holds.

\begin{theorem}\label{th3} $\mathrm{(}${\bf Existence, Uniqueness and Regularity for $(\mathcal{P}_N^H)\mathrm{)}$} Let  $h\in L^2(\Gamma)$ and suppose $\lambda \in\mathbb{C} $ with  $\mathrm{Re}\, \lambda  > 0$. Then Problem $(\mathcal{P}_N^H)$ has a unique solution which satisfies 
\begin{equation}\label{regularite}
u\in H^{3/2}(\Omega),\quad \sqrt {d} \, \nabla^2 u \in {\textbf{\textit L}}^2(\Omega) \quad and \quad u\in H^1(\Gamma).
\end{equation}
\end{theorem}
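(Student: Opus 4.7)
The overall strategy is to bootstrap regularity in three stages. For existence and uniqueness in $H^1(\Omega)$, I would apply the Lax--Milgram lemma to the sesquilinear form
$$a(u,v) \;=\; \lambda^2\!\int_\Omega u\,\bar v\,dx \;+\; \int_\Omega \nabla u\cdot\overline{\nabla v}\,dx$$
with antilinear right-hand side $v\mapsto\int_\Gamma h\,\bar v\,d\sigma$. Continuity on $H^1(\Omega)\times H^1(\Omega)$ is immediate. Coercivity is the one place where complex $\lambda$ matters, because $\mathrm{Re}(\lambda^2)=(\mathrm{Re}\,\lambda)^2-(\mathrm{Im}\,\lambda)^2$ can be negative, so testing $a(u,u)$ directly is not enough. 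The standard remedy is to multiply by $\bar\lambda$ before taking real parts: since $\bar\lambda\cdot\lambda^2=|\lambda|^2\lambda$ and $\mathrm{Re}(\bar\lambda)=\mathrm{Re}\,\lambda$, one gets
$$\mathrm{Re}\bigl(\bar\lambda\,a(u,u)\bigr) \;=\; (\mathrm{Re}\,\lambda)\bigl(|\lambda|^2\|u\|_{L^2(\Omega)}^2 + \|\nabla u\|_{L^2(\Omega)}^2\bigr),$$
which combined with $|a(u,u)|\geq|\mathrm{Re}(\bar\lambda\,a(u,u))|/|\lambda|$ gives coercivity on $H^1(\Omega)$ and hence a unique weak solution.

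To upgrade this to $H^{3/2}(\Omega)$ with $\sqrt d\,\nabla^2 u\in L^2(\Omega)$, I would recast the Helmholtz equation as a Poisson--Neumann problem by moving the zero-order term to the right: $-\Delta u=-\lambda^2 u$ in $\Omega$, $\partial u/\partial\mathbf{n}=h$ on $\Gamma$. Since $u\in H^1(\Omega)\hookrightarrow L^2(\Omega)$, the new source $f:=-\lambda^2 u$ belongs to $L^2(\Omega)$, and $h\in L^2(\Gamma)$ by hypothesis. The compatibility condition $\int_\Omega f+\langle h,1\rangle=0$ is automatic: integrating $\lambda^2 u-\Delta u=0$ over $\Omega$ and applying Green's formula yields $\lambda^2\!\int_\Omega u=\int_\Gamma h$. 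Setting $c=|\Omega|^{-1}\!\int_\Omega u$, the shifted function $u-c\in L^2_0(\Omega)$ is the unique $L^2_0$-solution of the same Neumann problem (uniqueness follows because any two $L^2_0$-solutions differ by a constant, hence by $0$), so Theorem \ref{PNH, lambda=0}(ii) applies and gives $u-c\in H^{3/2}(\Omega)$ with $\sqrt d\,\nabla^2(u-c)\in L^2(\Omega)$. A constant being smooth, the same regularity transfers verbatim to $u$.

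The boundary regularity $u|_\Gamma\in H^1(\Gamma)$ then comes for free from Ne\v cas: we have $u\in H^1(\Omega)$ with $\Delta u=\lambda^2 u\in L^2(\Omega)$, and $\partial u/\partial\mathbf{n}=h\in L^2(\Gamma)$, so the equivalence \eqref{Necasproperty} in Theorem \ref{Necas Property} immediately produces $u|_\Gamma\in H^1(\Gamma)$, closing \eqref{regularite}.

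The only genuinely new ingredient relative to the real-$\lambda$ analysis of Section \ref{sec2} is the coercivity of the sesquilinear form, and the one-line $\bar\lambda$-multiplier trick disposes of it; this is the ``main obstacle'' in the sense that it is the only place where complex $\lambda$ changes the argument. The two regularity improvements afterwards are turnkey consequences of the already-established $\lambda=0$ theory, because once $\lambda^2 u$ is moved to the right-hand side the problem is a Laplace--Neumann problem with $L^2$ data. Note that the statement is purely qualitative: it gives no $\lambda$-explicit control, and quantitative wavenumber-explicit estimates analogous to \eqref{estim1} are not claimed here and would presumably require, as in the proof of Theorem \ref{th1}, a Rellich-type identity adapted to complex $\lambda$.
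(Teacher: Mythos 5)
Your proposal is correct and follows essentially the same route the paper intends: the paper gives no separate proof of this theorem beyond ``Clearly, as for Theorem \ref{th1}'', and the argument for Theorem \ref{th1} is exactly your scheme --- unique $H^1(\Omega)$ solution, then $H^{3/2}(\Omega)$ and $\sqrt d\,\nabla^2 u\in L^2(\Omega)$ from the $\lambda=0$ Neumann theory of Theorem \ref{PNH, lambda=0} with $-\lambda^2 u\in L^2(\Omega)$ as right-hand side, then $u|_\Gamma\in H^1(\Gamma)$ from the Ne\v{c}as equivalence \eqref{Necasproperty}. The one point the paper leaves entirely implicit, coercivity of the sesquilinear form when $\mathrm{Re}(\lambda^2)$ may be negative, is correctly handled by your $\bar\lambda$-multiplier computation, so your write-up is in fact more complete than the paper's.
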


\begin{theorem}\label{th31} $\mathrm{(}${\bf Resolvent Boundary Estimates for $(\mathcal{P}_N^H)$} Let $h\in L^2(\Gamma)$ and suppose $\lambda \in\mathbb{C} $ with  $\mathrm{Re}\, \lambda \ge \omega $ for some arbitrary $\omega > 0$. Then for any real number $ r < 1/2$, there exists a positive constant $C_r$, depending only on $r$, $\Omega$ and $\omega$, such that the solution given by the previous theorem satisfies the following estimate
\begin{equation}\label{estim1bbisr}
\vert \lambda \vert^{3r} \,  \Vert u\Vert_{L^2(\Omega)}  +\;   \vert \lambda \vert^{r}  \Vert  u\Vert_{H^1(\Omega)}  + \;  \vert \lambda \vert^{2r} \, \Vert u\Vert_{L^2(\Gamma)}  + \Vert \nabla_{\mathscr{T}}u \Vert_{L^2(\Gamma)} \leq C_r \Vert  h\Vert_{L^2(\Gamma)}.
\end{equation}
\end{theorem}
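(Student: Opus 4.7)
My plan is to mirror the proof of Theorem~\ref{th1}, adapting each step to the complex parameter $\lambda$. I would first multiply the equation $\lambda^2 u-\Delta u=0$ by the multiplier $\bar\lambda\bar u$ and integrate by parts; taking real parts, using $\bar\lambda\lambda^2=|\lambda|^2\lambda$ and $\mathrm{Re}\,\lambda\ge\omega$, this yields
$$\omega\bigl(|\lambda|^{2}\|u\|^{2}_{L^{2}(\Omega)}+\|\nabla u\|^{2}_{L^{2}(\Omega)}\bigr)\le|\lambda|\,\|h\|_{L^{2}(\Gamma)}\|u\|_{L^{2}(\Gamma)}.$$
Combined with the trace interpolation $\|u\|_{L^{2}(\Gamma)}\le C\|u\|^{1/2}_{L^{2}(\Omega)}\|u\|^{1/2}_{H^{1}(\Omega)}$ and a self-bootstrapping identical to Step~1 of Theorem~\ref{th1}, this gives the crude bound
$$|\lambda|^{2}\|u\|^{2}_{L^{2}(\Omega)}+\|\nabla u\|^{2}_{L^{2}(\Omega)}\le C_\omega|\lambda|\,\|h\|^{2}_{L^{2}(\Gamma)},$$
which by itself already covers the target estimates for $\|u\|_{L^{2}(\Omega)}$ and $\|u\|_{L^{2}(\Gamma)}$ at the single exponent $r=1/6$.

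Next, for the tangential boundary piece $\|\nabla_{\mathscr{T}}u\|_{L^{2}(\Gamma)}$ and the sharper $|\lambda|^{2r}\|u\|_{L^2(\Gamma)}$ term, I would adapt the Rellich identity \eqref{RellichBi} to complex-valued $u$ by taking real parts throughout. The complex version picks up an extra interior term proportional to $\mathrm{Im}(\lambda^{2})\int_{\Omega}\textit{\textbf h}\cdot\mathrm{Im}(\bar u\nabla u)$, absent in the real case, that I would control via the crude energy bound of the previous step. The five-step approximation argument used in the proof of Theorem~\ref{th1} --- first on $\mathcal{C}^{1,1}$ domains with $H^{1/2}(\Gamma)$ data, then on general Lipschitz domains through the exhaustion $\Omega_k$ with regularized distance, and finally on $L^{2}(\Gamma)$ data by density --- then carries over essentially unchanged, since the only new ingredients are the additional complex error terms, all of which are controlled by the energy bound.

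Reaching exponents $r$ arbitrarily close to $1/2$ is the main technical obstacle and requires an interpolation step. The solution operator $\lambda\mapsto u_\lambda$ is holomorphic in $\lambda$ on the half-plane $\{\mathrm{Re}\,\lambda>0\}$ with values in the relevant Sobolev spaces, so I would interpolate between the optimal real-axis bound of Theorem~\ref{th1} (which gives $r=1/2$ on $\mathrm{Im}\,\lambda=0$, $|\lambda|\ge\omega$) and the crude complex-case bounds obtained above (valid throughout the half-plane). A Phragm\'en--Lindel\"of or Stein complex-interpolation argument applied to the analytic family on appropriate sectors of the half-plane then delivers every intermediate exponent $r\in(1/6,1/2)$, with a constant $C_r$ that necessarily blows up as $r\to 1/2$. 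This divergence is not an artefact of the method: it reflects the genuine singularity of the Neumann resolvent $(\lambda^{2}-\Delta_N)^{-1}$ as $\lambda$ approaches the imaginary axis, where the non-negative spectrum of $-\Delta_N$ lives.
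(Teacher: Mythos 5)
Your energy identity with the multiplier $\bar\lambda\bar u$ is correctly derived and does give the crude bound $|\lambda|^{2}\|u\|^{2}_{L^{2}(\Omega)}+\|\nabla u\|^{2}_{L^{2}(\Omega)}\le C_\omega|\lambda|\,\|h\|^{2}_{L^{2}(\Gamma)}$, and a complexified Rellich identity is indeed the right tool for the tangential trace. But the step that carries all the weight --- reaching every $r<1/2$ --- is the one that fails. A Phragm\'en--Lindel\"of or Stein interpolation argument transfers bounds from the \emph{boundary} of a region to its interior; here the good estimate (Theorem \ref{th1}) lives on the positive real axis, which is an \emph{interior} ray of the half-plane $\{\mathrm{Re}\,\lambda\ge\omega\}$ on which \eqref{estim1bbisr} must hold, while the boundary $\mathrm{Re}\,\lambda=\omega$ carries only the crude bound. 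On whatever sector you interpolate, the two-constants/three-lines mechanism yields an exponent that degrades continuously from $1/2$ on the real axis down to the crude exponent as $\arg\lambda$ approaches the edge of the sector. Hence for a fixed $r$ close to $1/2$ you obtain the estimate only in a thin sector around the real axis, never uniformly in $\arg\lambda$ --- and in particular not for $\lambda$ with large imaginary part, which is precisely the regime your own closing remark identifies as the hard one. The constant $C_r$ in \eqref{estim1bbisr} is not permitted to depend on $\arg\lambda$, so the interpolation step does not prove the theorem.

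There is a second, quantitative obstruction: your starting inequality is too weak to feed any self-improvement for the $H^1$ term. From $|\lambda|^{2}\|u\|^{2}_{L^2(\Omega)}+\|\nabla u\|^{2}_{L^2(\Omega)}\le C|\lambda|\,\|u\|_{L^{2}(\Gamma)}\|h\|_{L^{2}(\Gamma)}$, even the optimal boundary bound $\|u\|_{L^{2}(\Gamma)}\lesssim|\lambda|^{-1}\|h\|_{L^2(\Gamma)}$ only returns $\|\nabla u\|\lesssim\|h\|_{L^2(\Gamma)}$, i.e.\ $r=0$, never $r$ near $1/2$; and the crude bound itself gives only $\|u\|_{H^{1}(\Omega)}\le C|\lambda|^{1/2}\|h\|_{L^2(\Gamma)}$ and $\|u\|_{L^{2}(\Gamma)}\le C\|h\|_{L^2(\Gamma)}$, so it does not cover $r=1/6$ for those two terms as you claim. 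The paper's route is different and entirely iterative: splitting the energy identity into real and imaginary parts and combining them yields the sharper inequality $\|\nabla u\|^{2}_{L^2(\Omega)}+|\lambda|\,\|u\|^{2}_{L^{2}(\Omega)}\le C\|u\|_{L^{2}(\Gamma)}\|h\|_{L^{2}(\Gamma)}$, with no factor of $|\lambda|$ on the right and no loss on the gradient; then, taking the modulus of $\lambda^{2}\|u\|^{2}_{L^{2}(\Omega)}=-\|\nabla u\|^{2}_{L^{2}(\Omega)}+\int_{\Gamma}\bar u h$ and re-inserting the current bounds together with the trace inequality \eqref{inegGrisvard} and Young's inequality produces an explicit sequence of exponents $r_k\uparrow 1/2$, uniformly over the half-plane. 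Replacing your interpolation step by such a self-improving iteration, launched from a sufficiently strong first energy inequality, is what is needed to close the argument.
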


\begin{proof} {\bf Step 1.} Setting $\lambda = \vert \lambda \vert e^{i\varphi}$, with $\vert \varphi\vert < \pi/2$, by vertue oh the previous theorem it suffices to suppose that $\vert \varphi\vert$ is close to $\pi/2$ and then $\vert sin\, \varphi \vert \ge \frac{1}{2} $. We know that the solution $u$ satisfies
\begin{equation}\label{egaliteenergiebbis}
\lambda^2 \Vert u\Vert^2_{L^2(\Omega)} + \Vert \nabla u\Vert^2_{L^2(\Omega)} = \int_{\Gamma} \overline{u}h 
\end{equation}
and then
\begin{equation*}\label{egaliteenergiebbisReal}
 \Vert \nabla u\Vert^2_{L^2(\Omega)} + Re( \lambda^2) \Vert u\Vert^2_{L^2(\Omega)} = Re \left( \int_{\Gamma} \overline{u}h \right)
\end{equation*}
and
\begin{equation*}\label{egaliteenergiebbisIm}
Im(\lambda^2) \Vert u\Vert^2_{L^2(\Omega)}  = Im \left(\int_{\Gamma} \overline{u}h \right). 
\end{equation*}
As 
$$
\vert Im(\lambda^2)\vert = \vert \lambda\vert^2 \vert sin(2\varphi)\vert \ge  \vert \lambda\vert \, \omega.
$$
Using now the relation \eqref{egaliteenergiebbis}, we obtain for any $\alpha > 0$
\begin{equation*}\label{egaliteenergiebbissum}
 \Vert \nabla u\Vert^2_{L^2(\Omega)} + \left( Re( \lambda^2)  + \alpha \vert Im( \lambda^2\vert \right) \Vert u\Vert^2_{L^2(\Omega)} \leq (1 + \alpha) \Vert u\Vert _{L^2(\Gamma)}\Vert h\Vert _{L^2(\Gamma)}.
\end{equation*}
Observe that there exist $\alpha$ and $c > 0$, depending only on $\omega$ such that 
$$
 Re (\lambda^2)  + \alpha \vert Im( \lambda^2\vert) \ge c \vert \lambda \vert.
$$
So we deduce that there exists a consatnt $C > 0$, which depends only on $\omega$  such that
\begin{equation}\label{estimgrad2+lamdau}
 \Vert \nabla u\Vert^2_{L^2(\Omega)} + \vert \lambda \vert \, \Vert u\Vert^2_{L^2(\Omega)} \leq C \Vert u \Vert_{L^2(\Gamma)}\Vert h \Vert_{L^2(\Gamma)}.
\end{equation}
But, we know that (see the proof of Theorem 1.5.1.10 in Grisvard \cite{grisvard})
\begin{equation}\label{inegGrisvard}
\Vert u \Vert_{L^2(\Gamma)} \leq C \left(\Vert u\Vert^{1/2}_{L^2(\Omega)}\Vert \nabla u\Vert^{1/2}_{L^2(\Omega)} + \Vert u \Vert_{L^2(\Omega)} \right).
\end{equation}
From these last relations, we get
\begin{equation*}
\begin{array}{rl}
 \Vert \nabla u\Vert^2_{L^2(\Omega)} + & \vert \lambda \vert \, \Vert u\Vert^2_{L^2(\Omega)} \leq C \left(\Vert u\Vert^{1/2}_{L^2(\Omega)}\Vert \nabla u\Vert^{1/2}_{L^2(\Omega)} + \Vert u \Vert_{L^2(\Omega)} \right)\Vert h \Vert_{L^2(\Gamma)}\\\\
 & = C \left(\vert \lambda \vert^{1/4}\Vert u\Vert^{1/2}_{L^2(\Omega)}\vert \lambda \vert^{-1/4}\Vert \nabla u\Vert^{1/2}_{L^2(\Omega)}\Vert h \Vert_{L^2(\Gamma)} + \vert \lambda \vert^{1/2} \Vert u \Vert_{L^2(\Omega)} \vert \lambda \vert^{-1/2}\Vert h \Vert_{L^2(\Gamma)}\right).
\end{array}
\end{equation*}
By Young inequality, it follows that
\begin{equation*}
\begin{array}{rl}
 \Vert \nabla u\Vert^2_{L^2(\Omega)} + & \vert \lambda \vert \, \Vert u\Vert^2_{L^2(\Omega)} \leq C \left(\vert \lambda \vert^{-1/3}\Vert \nabla u\Vert^{2/3}_{L^2(\Omega)}\Vert h \Vert^{4/3}_{L^2(\Gamma)} + \vert \lambda \vert^{-1} \Vert h \Vert^2_{L^2(\Gamma)} \right)
\\\\
& \leq \frac{1}{2} \Vert \nabla u \Vert^2_{L^2(\Omega)} +  C \left(\vert \lambda \vert^{-1/2}\Vert h\Vert^{2}_{L^2(\Gamma)} +  \vert \lambda \vert^{-1}\Vert h\Vert^{2}_{L^2(\Gamma)}\right)
\end{array}
\end{equation*}
and then the following estimate
\begin{equation*}\label{estiminter1/43/4}
 \vert \lambda \vert^{3/4}  \, \Vert u\Vert_{L^2(\Omega)} + \vert \lambda \vert^{1/4} \Vert \nabla u\Vert_{L^2(\Omega)}  \leq C \Vert h \Vert_{L^2(\Gamma)}.
\end{equation*}
Now from the inequalities  \eqref{inegGrisvard} and  \eqref{estiminter1/43/4}, we obtain the following estimate
\begin{equation*}\label{estiminter1/43/4Gamma}
\vert \lambda \vert^{1/2} \Vert  u\Vert_{L^2(\Gamma)}  \leq C \Vert h \Vert_{L^2(\Gamma)},
\end{equation*}
where all the constants above depend only on $\Omega$ and $\omega$. So that there exists a constant $C_1$ depending only on $\Omega$ and $\omega$ such that
\begin{equation*}\label{estiminter1/43/41/2Gamma}
 \vert \lambda \vert^{3/4}  \, \Vert u\Vert_{L^2(\Omega)} + \vert \lambda \vert^{1/4} \Vert \nabla u\Vert_{L^2(\Omega)}+ \vert \lambda \vert^{1/2} \Vert  u\Vert_{L^2(\Gamma)}  \leq C_1 \Vert h \Vert_{L^2(\Gamma)}.
\end{equation*}

\noindent{\bf Step 2.} Rewrite now the relation \eqref{egaliteenergiebbis} as follow:
\begin{equation}\label{egaliteenergiebbis1}
\lambda^2 \Vert u\Vert^2_{L^2(\Omega)} = - \Vert \nabla u\Vert^2_{L^2(\Omega)} + \int_{\Gamma} \overline{u}h 
\end{equation}
and take the module of the both sides of the previous equality. Using then the estimates \eqref{estiminter1/43/4} and \eqref{estiminter1/43/4Gamma}, we get 
\begin{equation}\label{estiminter1/43/4a}
 \vert \lambda \vert^{5/4}  \, \Vert u\Vert_{L^2(\Omega)} \leq C \Vert h \Vert_{L^2(\Gamma)}.
\end{equation}
From the inequalities \eqref{estimgrad2+lamdau},  \eqref{inegGrisvard},  \eqref{estiminter1/43/4} \eqref{estiminter1/43/4a} and then Young inequality, we find
\begin{equation*}
\begin{array}{rl}
 \Vert \nabla u\Vert^2_{L^2(\Omega)} +  \vert \lambda \vert \, \Vert u\Vert^2_{L^2(\Omega)}& \leq  C \left(\vert \lambda \vert^{-5/8} \Vert \nabla u\Vert^{1/2}_{L^2(\Omega)}\Vert h \Vert^{3/2}_{L^2(\Gamma)} + \vert \lambda \vert^{-5/4} \Vert h \Vert^{2}_{L^2(\Gamma)}\right) \\\\
 & \leq C \left(\vert \lambda \vert^{-5/6} + \vert \lambda \vert^{-5/4}) \Vert h \Vert^{2}_{L^2(\Gamma)} \right) \\\\
 & \leq C\vert \lambda \vert^{-5/6}  \Vert h \Vert^{2}_{L^2(\Gamma)},
 \end{array}
\end{equation*}
since $\vert \lambda \vert \geq \omega$, and as consequence
\begin{equation}\label{estiminter1/43/4ab}
\vert \lambda \vert^{5/12} \Vert \nabla u\Vert_{L^2(\Omega)} \leq C \Vert h \Vert_{L^2(\Gamma)}.
\end{equation}
Now from the inequalities  \eqref{inegGrisvard}, \eqref{estiminter1/43/4a}  and \eqref{estiminter1/43/4ab}, we obtain the following estimate
\begin{equation*}\label{estiminter1/43/4Gamma1}
\vert \lambda \vert^{5/6} \Vert  u\Vert_{L^2(\Gamma)}  \leq C \Vert h \Vert_{L^2(\Gamma)}.
\end{equation*}
Repeating this reasoning, we find at Step $k$, with $k\geq 3$, the following estimate
\begin{equation*}\label{estiminterstepk1}
\vert \lambda \vert^{3r_{k}} \,  \Vert u\Vert_{L^2(\Omega)}  +\;   \vert \lambda \vert^{r_{k}}  \Vert  u\Vert_{H^1(\Omega)}  + \;  \vert \lambda \vert^{2r_{k}} \, \Vert u\Vert_{L^2(\Gamma)}   \leq C_{k} \Vert  h\Vert_{L^2(\Gamma)}, 
\end{equation*}
with $C_k$ depending only on $\Omega$, $\omega$ and $k$ and 
$$
r_k = \frac{2\times 3^{k-1} - 1}{4\times 3^{k-1}} < \frac{1}{2}\quad \mathrm{ and }\quad  \lim_{k\rightarrow \infty}r_{k} = \frac{1}{2}.
$$
We deduce that for any $r < 1/2$, there exists a constant $C_k$ depending only on $\Omega$, $\omega$ and $k$ such that we have the following estimate
\begin{equation}\label{estim1bbisrincomp}
\vert \lambda \vert^{3r} \,  \Vert u\Vert_{L^2(\Omega)}  +\;   \vert \lambda \vert^{r}  \Vert  u\Vert_{H^1(\Omega)}  + \;  \vert \lambda \vert^{2r} \, \Vert u\Vert_{L^2(\Gamma)}  \leq C_r \Vert  h\Vert_{L^2(\Gamma)}.
\end{equation} 
Finally, using \eqref{RellichBi} and \eqref{estim1bbisrincomp}, we get that $\Vert \nabla_{\mathscr{T}}u \Vert_{L^2(\Gamma)} \leq C(\Omega, \omega) \Vert  h\Vert_{L^2(\Gamma)}$ and then the required estimate.\end{proof}
 
\begin{remark}\label{rq33}\upshape Now using \eqref{estim1bbisr}, \eqref{energieavecd} and  \eqref{estimabe}, we can prove easily that for any $r<1/2$, there exists a constant $C_r$ depending only on $r$, $\Omega$ and $\omega$ such that we have following inequality
\begin{equation}\label{estimabdb}
\vert\lambda\vert \, \Vert \sqrt \rho \nabla u\Vert_{L^2(\Omega)} + \Vert  u\Vert_{H^{3/2}(\Omega)}  +  \Vert \sqrt \rho \nabla^2 u\Vert_{L^2(\Omega)} \leq C_r\vert\lambda\vert^{1-2r}\Vert h\Vert_{L^2(\Gamma)},
\end{equation}

\end{remark}

\begin{remark}\label{rq34}\upshape
According to (\ref{estim1bbisr}) we have that 
$$
\vert \lambda \vert^{2r} \, \Vert u\Vert_{L^2(\Gamma)}   \leq C_r \Vert  h\Vert_{L^2(\Gamma)}.
$$
Which is optimal, according to \cite{tata} and implies in particular that there exists $T >0$ such that for the corresponding solution of the wave equation:
$$
\left\{
\begin{array}{llc}
\partial_t^2 \psi - \Delta \psi = 0, \quad \mathrm{in}\;  \Omega \times (0,T), \\
\frac{\partial \psi}{\partial n} = k \in L^2(0,T;L^2_0(\Gamma)), \\
\psi(x,0) = 0,\quad \partial_t \psi (x,0) = 0, \quad \mathrm{on} \; \Omega,
\end{array}
\right.
$$
$\partial_t \psi \notin L^2(0,T; L^2(\Gamma))$, and consequently the lack of the open loop admissibility (see for details \cite{at,AN}).  
\end{remark}

Given $F\in L^2(\Omega)$, consider now the following problem:
\begin{equation}\label{PN0}
(\mathcal{P}_N^0)\ \ \ \ \lambda^2 w - \Delta w = F\quad \ \mbox{in}\ \Omega \quad
\mbox{and } \quad \frac{\partial w}{\partial\textit{\textbf n}} = 0 \ \ \mbox{on }\Gamma.
\end{equation}

\begin{theorem}\label{th2BisN} $\mathrm{(}${\bf Resolvent Interior Estimates I for} $(\mathcal{P}_N^0) \mathrm{)}$ Let $F \in L^2(\Omega)$ and suppose $\lambda $ satisfying the following condition:
\begin{equation*}\label{hyplambdaF}
[\lambda \in \mathbb{R}^*\quad  with \quad   \vert \lambda \vert \ge \lambda_0 \quad  if \; \int_{\Omega} F \not= 0] \quad or \quad [ \lambda \in\mathbb{C} \quad  with \quad   \mathrm{Re}\, \lambda \ge \omega  ], 
\end{equation*}
with $\lambda_0 > 0$  and  $\omega > 0$ arbitrary. Then Problem $(\mathcal{P}_N^0)$ has a unique solution $w$ verifying the properties \eqref{regularite}. Moreover, in the real case, we have the following estimate
\begin{equation}\label{estim1Bis}
%\begin{array}{rl}
\vert \lambda \vert^{2} \,  \Vert w\Vert_{L^2(\Omega)}   +\;    \vert \lambda \vert  \Vert  w\Vert_{H^1(\Omega)}  
%+ \vert\lambda\vert^{3/2} \, \Vert \sqrt \rho \nabla w\Vert_{L^2(\Omega)} 
 +  \vert \lambda \vert^{1/2} \Vert w\Vert_{H^{3/2}(\Omega)} \; 
 + \;     \vert \lambda \vert^{3/2}  \, \Vert w\Vert_{L^2(\Gamma)}  +\;  \vert \lambda \vert^{1/2}  \Vert w \Vert_{H^1(\Gamma)} \leq C \Vert  F\Vert_{L^2(\Omega)}.
%\end{array}
\end{equation}
%\vert \lambda \vert^{3/2} \;  \Vert \sqrt \rho  \nabla u\Vert_{L^2(\Omega)}  +
% \vert \lambda \vert^{1/2}  \Vert \sqrt \rho  \nabla^2 u\Vert_{L^2(\Omega)} 
In the complex case, for any $r < 1$, there exists a constant $C_r$ depending only on $r$, $\Omega$ and $\omega$ such that we have following inequality
\begin{equation*}\label{estim1BisComp}
\begin{array}{rl}
\vert \lambda \vert^{\frac{5}{2}r} \,  \Vert  w \Vert_{L^2(\Omega)} & +\;    \vert \lambda \vert^{\frac{3}{2}r}  \Vert  w\Vert_{H^1(\Omega)}  + \;   \vert\lambda\vert^{2r- 1} \Vert  w\Vert_{H^{3/2}(\Omega)}  + \;  \vert \lambda \vert^{2r} \, \Vert w\Vert_{L^2(\Gamma)}  +  \vert \lambda \vert^{r} \Vert \nabla_{\mathscr{T}}w \Vert_{L^2(\Gamma)}   \\\\
&  \leq C_r \vert \lambda\vert^{\frac{1}{2}}\Vert  F \Vert_{L^2(\Omega)}.
\end{array}
\end{equation*}
%\vert\lambda\vert^{2r} \, \Vert \sqrt \rho \nabla w\Vert_{L^2(\Omega)} 
%+  \Vert \sqrt \rho \nabla^2 w\Vert_{L^2(\Omega)} 
\end{theorem}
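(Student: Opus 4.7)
The strategy mirrors the proof of Theorem \ref{th1}, with $F\in L^2(\Omega)$ replacing the boundary data.

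\textbf{Existence and base energy.} Since $\lambda^2\neq 0$, the bilinear form on $H^1(\Omega)$ is coercive (real case) or is treated via the Re/Im weighting of Step 1 in the proof of Theorem \ref{th31} (complex case with $\mathrm{Re}\,\lambda\geq\omega$), yielding a unique variational solution $w\in H^1(\Omega)$. Because $-\Delta w=F-\lambda^2 w\in L^2(\Omega)$, $\partial_n w=0$ and the compatibility $\int_\Omega(F-\lambda^2 w)=0$ is automatic, Theorem \ref{PNH, lambda=0}(ii) gives $w\in H^{3/2}(\Omega)$ and $\sqrt d\,\nabla^2 w\in\textit{\textbf L}^2(\Omega)$, while the Ne\v{c}as property yields $w\in H^1(\Gamma)$. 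Testing the equation against $\bar w$ produces $\lambda^2\|w\|_{L^2}^2+\|\nabla w\|_{L^2}^2 =\int_\Omega F\bar w$. In the real case: when $\int_\Omega F=0$ we have $\int_\Omega w=0$ and Poincar\'e--Wirtinger gives $|\lambda|^2\|w\|_{L^2}+|\lambda|\|\nabla w\|_{L^2}\leq C\|F\|_{L^2}$; when $\int_\Omega F\neq 0$ the same estimate holds for $|\lambda|\geq\lambda_0>0$.

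\textbf{Boundary estimates.} Applying the Rellich identity \eqref{RellichBi} to the inhomogeneous equation (it now carries an extra term $2\int_\Omega(\textit{\textbf h}\cdot\nabla w)F$ on the right) with $\textit{\textbf h}\in\mathcal{C}^\infty(\overline\Omega)$ satisfying $\textit{\textbf h}\cdot\textit{\textbf n}\geq\alpha>0$ on $\Gamma$, and using $\partial_n w=0$, one gets the bound $\int_\Gamma(|\lambda|^2|w|^2+|\nabla_{\mathscr{T}}w|^2)\leq C(|\lambda|^2\|w\|_{L^2}^2+\|\nabla w\|_{L^2}^2+\|F\|_{L^2}\|\nabla w\|_{L^2})$, which is $\leq C\|F\|_{L^2}^2/|\lambda|$ after plugging in the previous step. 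This yields the real-case boundary estimates $|\lambda|^{3/2}\|w\|_{L^2(\Gamma)}+|\lambda|^{1/2}\|\nabla_{\mathscr{T}}w\|_{L^2(\Gamma)}\leq C\|F\|_{L^2}$. The reduction from $\mathcal{C}^{1,1}$ to general Lipschitz $\Omega$ is done via the approximating family $\Omega_k$ used in Steps 2--5 of the proof of Theorem \ref{th1}, which transfers verbatim since $F$ enters linearly both in the equation and in the Rellich right-hand side.

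\textbf{Weighted estimate and $H^{3/2}$.} Multiplying the equation by $-d\Delta\bar w$ and integrating by parts (the boundary contributions drop out thanks to $d|_\Gamma=0$ and $\partial_n w=0$), then applying Young's inequality to the $F$-term, gives $|\lambda|\,\|\sqrt d\,\nabla w\|_{L^2}+\|\sqrt d\,\Delta w\|_{L^2}\leq C\|F\|_{L^2}$. Inserting these into \eqref{estimabe} produces $\|w\|_{H^{3/2}}+\|\sqrt d\,\nabla^2 w\|_{L^2}\leq C\|F\|_{L^2}$. The extra factor $|\lambda|^{1/2}$ on the $H^{3/2}$ norm is obtained by interpolating $H^{3/2}$ between $H^1$ and the Lipschitz substitute for $H^2$: a Gagliardo-type inequality of the form $\|w\|_{H^{3/2}}^2 \leq C\|w\|_{H^1}\bigl(\|w\|_{H^1}+\|\sqrt d\,\nabla^2 w\|_{L^2}+\|w\|_{H^1(\Gamma)}\bigr)$, combined with the $H^1$ energy bound $\|w\|_{H^1}\leq C\|F\|_{L^2}/|\lambda|$, delivers the result.

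\textbf{Complex case and main obstacle.} In the complex case the same chain applies, with (i) the base energy estimate replaced by its weaker complex analog from Theorem \ref{th31}, and (ii) every appeal to Theorem \ref{th1} replaced by Theorem \ref{th31} combined with Remark \ref{rq33} at a suitable parameter $r'<1/2$; for each target $r<1$, $r'$ is chosen so that the resulting power of $|\lambda|$ matches the $|\lambda|^{1/2}\|F\|_{L^2}$-type right-hand side. The main obstacle is the last upgrade: the inequality \eqref{estimabe} alone produces only $\|w\|_{H^{3/2}}\leq C\|F\|_{L^2}$, and extracting the extra half-power of $|\lambda|$ must be done through interpolation with the low-frequency $H^1$ estimate. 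A secondary difficulty is the bookkeeping in the complex case, where each target exponent in the five-term estimate must be matched simultaneously by an appropriate choice of $r'$.
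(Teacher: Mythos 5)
Your route is genuinely different from the paper's, and it contains a gap at exactly the point you flag as ``the main obstacle.'' The paper does not run the energy/Rellich machinery directly on $w$. Instead it extends $F$ by zero, forms $z=E\star\widetilde F$ with $E$ the fundamental solution of $\lambda^2 I-\Delta$, derives $\vert\lambda\vert^2\Vert z\Vert_{L^2(\R^N)}+\vert\lambda\vert\Vert\nabla z\Vert_{L^2(\R^N)}+\Vert\Delta z\Vert_{L^2(\R^N)}\leq C\Vert F\Vert_{L^2(\Omega)}$ on the whole space, gets $\vert\lambda\vert^{1/2}\Vert z\Vert_{H^{3/2}(\R^N)}\leq C\Vert F\Vert_{L^2(\Omega)}$ by Fourier interpolation, obtains the trace bounds from \eqref{normL2Gamma} with $\varepsilon=\vert\lambda\vert^{-1}$, and then writes $w=z_{\vert\Omega}-v$ where $v$ solves the \emph{homogeneous} equation with Neumann data $\partial z/\partial\textit{\textbf n}$, which is controlled in $L^2(\Gamma)$ by $C\vert\lambda\vert^{-1/2}\Vert F\Vert_{L^2(\Omega)}$; Theorem \ref{th1} (real case) or Theorem \ref{th31} with Remark \ref{rq33} (complex case) then gives all the estimates for $v$. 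This decomposition is what makes the half-power of $\vert\lambda\vert$ on the $H^{3/2}$ norm accessible: it comes from exact Fourier interpolation for $z$ in $\R^N$ plus the linear bound $\Vert v\Vert_{H^{3/2}(\Omega)}\leq C\Vert\partial z/\partial\textit{\textbf n}\Vert_{L^2(\Gamma)}$, not from any interpolation inequality on the Lipschitz domain itself.

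The gap in your argument: your chain $\vert\lambda\vert\Vert\sqrt d\,\nabla w\Vert_{L^2}+\Vert\sqrt d\,\Delta w\Vert_{L^2}\leq C\Vert F\Vert_{L^2}$ fed into the additive inequality \eqref{estimabe} yields only $\Vert w\Vert_{H^{3/2}(\Omega)}\leq C\Vert F\Vert_{L^2(\Omega)}$, since the dominant input $\Vert\sqrt d\,\Delta w\Vert_{L^2}$ carries no negative power of $\vert\lambda\vert$. To recover the factor $\vert\lambda\vert^{1/2}$ you invoke a multiplicative ``Gagliardo-type'' refinement $\Vert w\Vert^2_{H^{3/2}}\leq C\Vert w\Vert_{H^1}\bigl(\Vert w\Vert_{H^1}+\Vert\sqrt d\,\nabla^2 w\Vert_{L^2}+\Vert w\Vert_{H^1(\Gamma)}\bigr)$ that appears nowhere in the paper and is not a standard fact: the space on the right is not $H^2(\Omega)$ but the weighted space of \cite{AMN}, and whether $H^{3/2}(\Omega)$ is an exact interpolation midpoint between $H^1(\Omega)$ and that space on a general Lipschitz domain is precisely the kind of statement that requires proof. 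Without it, the term $\vert\lambda\vert^{1/2}\Vert w\Vert_{H^{3/2}(\Omega)}$ in \eqref{estim1Bis} is not obtained. A secondary, unresolved issue is the complex case, where you replace each appeal to Theorem \ref{th1} by Theorem \ref{th31} ``at a suitable $r'$'' without verifying that the five exponents in \eqref{estim1BisComp} can be matched simultaneously; in the paper this bookkeeping is done explicitly through the estimates \eqref{estim2bbisrincompaa}, \eqref{estisurzL2GH2O} and \eqref{estim105aa} for $z$ and $v$ separately. The rest of your real-case computation (energy identity, Rellich with the extra source term, the weighted identity with $-d\Delta\bar w$) is sound for $\vert\lambda\vert$ bounded below and would reproduce the $L^2(\Omega)$, $H^1(\Omega)$ and boundary terms of \eqref{estim1Bis}.
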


\begin{proof} 
%To simplify the proof, we consider only the case where $\lambda$ is a real number. 
Clearly Problem $(\mathcal{P}_N^0)$ has a  unique solution $w\in H^{1}(\Omega)$. As $ \Delta w \in L^2(\Omega)$, then $w\in H^{3/2}(\Omega)$ and $\sqrt \rho \,  \nabla^2 w \in L^2(\Omega)$.  
%Moreover
%\begin{equation}\label{egaliteenergieBisN}
%\lambda^2 \Vert w\Vert^2_{L^2(\Omega)} + \Vert \nabla w\Vert^2_{L^2(\Omega)} = \int_{\Omega} wF.
%\end{equation}

We give the proof only in the case where $\int_{\Omega} F = 0$. If not, we proceed as in the proof of Theorem \ref{th1}. 
Extending $F$ by $0$ outside of $\Omega$ and using the fundamental solution $E$ of the operator $\lambda^2 I - \Delta$, the function $z = E \star \widetilde{F}$ belongs to $H^2(\R^N)$ and satisfies 
$$ 
\lambda^2 z - \Delta z = \widetilde{F}\quad \mathrm{ in }\; \R^N\quad  \mathrm{and } \quad \hat{z}(\xi)  = C_N (\lambda^2 + \vert \xi^2 \vert)^{-1} {\widehat{\widetilde{F}}}(\xi),
$$
where $\hat{z}$ is the Fourier transform of $z$. 

\noindent{\bf i) Case $\lambda$ real.} As the average of $z$ in $\Omega$ is equal to zero, multiplying the above equation by $z$ and then by $- \Delta z$ the above equation and integrating in $\R^N$, we get the following estimates 
\begin{equation}\label{estim102}
\mathrm{Max}\{\vert\lambda\vert^2 \Vert z\Vert_{L^2(\R^N)},\, \vert\lambda\vert\,  \Vert \nabla z\Vert_{L^2(\R^N)},\, \frac{1}{2}\Vert \Delta z\Vert_{L^2(\R^N)}\} \leq  \Vert F\Vert_{L^2(\Omega)} .
\end{equation}
Note that the previous estimate can be obtained by using directly the explicit expression of the solution $z$, or by interpolation thanks to the estimate \eqref{estim102}, from which we deduce that
\begin{equation}\label{estim103}
\vert \lambda \vert^{1/2}  \Vert  z\Vert_{H^{3/2}(\R^N)}\leq C\Vert F\Vert_{L^2(\Omega)}.
\end{equation}

Recall now that if $\varphi \in H^1(\Omega)$, for any $\varepsilon > 0$, we have
\begin{equation}\label{normL2Gamma}
\int_{\Gamma} \vert \varphi  \vert^2 \leq C (\Omega) \left(\varepsilon^{-1} \Vert \varphi  \Vert^2_{L^2(\Omega)} + \varepsilon  \Vert \varphi  \Vert^2_{H^1(\Omega)}\right).
\end{equation}
Choosing $\varepsilon = \vert \lambda \vert^{-1}$ and successively  $\varphi = z$ and then $ \varphi = \nabla z$, we obtain the following inequality
\begin{equation}\label{estim104}
\vert \lambda \vert^{3/2}  \Vert  z\Vert_{L^{2}(\Gamma)} + \vert \lambda \vert^{1/2}  \Vert  \nabla z\Vert_{L^{2}(\Gamma)}\leq C(\Omega) \Vert F\Vert_{L^2(\Omega)}.
\end{equation}

Setting then $v = z_{\vert\Omega} - w$, we have  $\lambda^2 v - \Delta v = 0$ in $\Omega$ and $\frac{\partial v}{\partial\textit{\textbf n}} =  \frac{\partial z}{\partial\textit{\textbf n}}$ on $\Gamma$. Using Theorem \ref{th1}, \eqref{estim102} and \eqref{estim104} we get
\begin{equation}\label{estim105}
\begin{array}{rl}
\vert \lambda \vert^{3/2} \,  \Vert v\Vert_{L^{2}(\Omega)}   &  + \;  \vert \lambda \vert^{1/2} \Vert  v\Vert_{H^1(\Omega)}   + \Vert v\Vert_{H^{3/2}(\Omega)}  + \;  \vert \lambda \vert  \, \Vert v\Vert_{L^2(\Gamma)} + \; \Vert v \Vert_{H^1(\Gamma)  } \\\\  
& \leq C \Vert \frac{\partial z}{\partial\textit{\textbf n}} \Vert_{L^2(\Gamma)}\    \leq C   \vert \lambda \vert^{-1/2}\Vert  F\Vert_{L^2(\Omega)}.
\end{array}
\end{equation}
%+\;   \vert \lambda \vert \;  \Vert \sqrt \rho  \nabla v\Vert_{L^2(\Omega)}
% \Vert \sqrt \rho  \nabla^2 v\Vert_{L^2(\Omega)}  +
The required estimate is finally a consequence of \eqref{estim102}, \eqref{estim103}, \eqref{estim104} and \eqref{estim105}. \medskip

\noindent{\bf ii) Case $\lambda$ complex.} Multiplying by $\overline{z}$, we have the relation
\begin{equation*}\label{egaliteenergiebbiscomp}
\lambda^2 \Vert z\Vert^2_{L^2(\R^N)} + \Vert \nabla z\Vert^2_{L^2(\R^N)} = \int_{\Omega} F\overline{z},
\end{equation*}
from which we deduce as in Step 1 of the proof of Theorem \ref{th31} that
\begin{equation}\label{inegaliteenergiebbiscompa}
\vert \lambda\vert \Vert z\Vert_{L^2(\R^N)} + \vert \lambda\vert^{1/2} \Vert \nabla z\Vert_{L^2(\R^N)} \leq C \Vert  F \Vert_{L^2(\Omega)}.
\end{equation}
Writing the above equality as follows
$$
\lambda^2 \Vert z\Vert^2_{L^2(\R^N)} = - \Vert \nabla z\Vert^2_{L^2(\R^N)} + \int_{\Omega} F\overline{z},
$$
and taking the module of the both sides of the previous equality, we get thanks to \eqref{inegaliteenergiebbiscompa} the following estimate 
\begin{equation*}\label{estiminter3/23/4acomp}
 \vert \lambda\vert^{3/2} \Vert z\Vert_{L^2(\R^N)} + \vert \lambda\vert^{3/4} \Vert \nabla z\Vert_{L^2(\R^N)} \leq C \Vert  F \Vert_{L^2(\Omega)}.
\end{equation*}
Repeating this reasoning, we find at Step $k$, with $k\geq 2$, the following estimate
\begin{equation*}\label{estiminterstepk2}
 \vert \lambda\vert^{r_k} \Vert z\Vert_{L^2(\R^N)} + \vert \lambda\vert^{\frac{1}{2}r_k} \Vert \nabla z\Vert_{L^2(\R^N)} \leq C \Vert  F \Vert_{L^2(\Omega)}
\end{equation*}
with 
$$
r_k = 1 +  \frac{1}{2}r_{k-1}, \; k \ge 1\quad\mathrm{ and  } \quad  r_0 = 0.
$$
We verify easily that for any $k \geq 1$, we have $r_k < 2$ and $\ds \lim_{k\rightarrow \infty}r_{k} = 2.$ We deduce that for any $r < 1$, there exists a constant $C_r$ depending only on $\Omega$, $\omega$ and $r$ such that we have the following estimate
\begin{equation}\label{estim2bbisrincompaa}
 \vert \lambda\vert^{2r} \Vert z\Vert_{L^2(\R^N)} + \vert \lambda\vert^{r} \Vert \nabla z\Vert_{L^2(\R^N)} \leq C_r \Vert  F \Vert_{L^2(\Omega)}.
\end{equation} 
As $\Delta z = \lambda^2 z$, we get
\begin{equation*}\label{estim2bbisDeltarincomp}
 \vert \lambda\vert^{2r} \Vert z\Vert_{L^2(\R^N)} + \vert \lambda\vert^{r} \Vert \nabla z\Vert_{L^2(\R^N)} + \vert \lambda\vert^{2(r-1)} \Vert \Delta z\Vert_{L^2(\R^N)} \leq C_r \Vert  F \Vert_{L^2(\Omega)}.
\end{equation*} 
So by interpolation, 
\begin{equation*}\label{estimH3demi}
 \vert \lambda\vert^{2r - \frac{3}{2}} \Vert z\Vert_{H^{3/2}(\R^N)} \leq C_r \Vert  F \Vert_{L^2(\Omega)}.
\end{equation*} 
As in Point i) we deduce from \eqref{normL2Gamma} and \eqref{estim2bbisrincompaa} that
\begin{equation*}
\Vert z\Vert^2_{L^2(\Gamma)} \leq C( \vert \lambda\vert \vert \lambda\vert^{-4r} +  \vert \lambda\vert^{-1} \vert \lambda\vert^{-2r})\Vert  F \Vert^2_{L^2(\Omega)}   \leq C \vert \lambda\vert^{1-4r} \Vert  F \Vert^2_{L^2(\Omega)}.
\end{equation*} 
Proceeding by the same way for the estimate of $\Vert \nabla z\Vert^2_{L^2(\Gamma)} $, we get finally
\begin{equation}\label{estisurzL2GH2O}
\vert \lambda\vert^{2r-\frac{1}{2}} \Vert z\Vert_{L^2(\Gamma)} + \vert \lambda\vert^{r-\frac{1}{2}} \Vert \nabla z\Vert_{L^2(\Gamma)}     \leq C \vert  \Vert  F \Vert_{L^2(\Omega)}.
\end{equation} 
Setting as above  $v = z_{\vert\Omega} - w$ and using Theorem \ref{th31}, Remark \ref{rq33} and \eqref{estisurzL2GH2O} we get
\begin{equation*}\label{estim105a}
\begin{array}{rl}
\vert \lambda \vert^{\frac{3}{2}r} \,  \Vert  v \Vert_{L^2(\Omega)} & +\;   \vert \lambda \vert^{\frac{1}{2}r}  \Vert  v\Vert_{H^1(\Omega)}   + \;    \vert\lambda\vert^{r- 1} \Vert  v\Vert_{H^{3/2}(\Omega)} + \;  \vert \lambda \vert^{r} \, \Vert v\Vert_{L^2(\Gamma)}  + \Vert \nabla_{\mathscr{T}}v \Vert_{L^2(\Gamma)} \\\\
& \leq C_r\Vert \frac{\partial z}{\partial\textit{\textbf n}}\Vert_{L^2(\Gamma)} \leq C_r  \vert \lambda\vert^{\frac{1}{2}-r} \Vert  F \Vert_{L^2(\Omega)}.
\end{array}
\end{equation*}
%\vert\lambda\vert^r \, \Vert \sqrt \rho \nabla v\Vert_{L^2(\Omega)} +
%+  \Vert \sqrt \rho \nabla^2v\Vert_{L^2(\Omega)} 
So
\begin{equation}\label{estim105aa}
\begin{array}{rl}
\vert \lambda \vert^{\frac{5}{2}r} \,  \Vert  v \Vert_{L^2(\Omega)} & +\;    \vert \lambda \vert^{\frac{3}{2}r}  \Vert  v\Vert_{H^1(\Omega)}  + \;  \vert \lambda \vert^{2r} \, \Vert v\Vert_{L^2(\Gamma)}  +  \vert \lambda \vert^{r} \Vert \nabla_{\mathscr{T}}v \Vert_{L^2(\Gamma)}  + \;  \vert\lambda\vert^{2r- 1} \Vert  v\Vert_{H^{3/2}(\Omega)}  \\\\
&  \leq C_r \vert \lambda\vert^{\frac{1}{2}}\Vert  F \Vert_{L^2(\Omega)}.
\end{array}
\end{equation}
%\Vert \sqrt \rho \nabla v\Vert_{L^2(\Omega)
%+  \Vert \sqrt \rho \nabla^2 v\Vert_{L^2(\Omega)} 
The required estimate of $w$, which is actually obtained from that of $v$, is finally a consequence of \eqref{estim105aa} and the above estimates of $z$.\end{proof}
 \medskip
 
 \begin{remark}\label{H1demi} \upshape Using an interpolate argument we deduce immediately that the solution of the above theorem satisfies the following estimates 
 \begin{equation*}\label{estim10-}
  \vert \lambda \vert^{3/2} \Vert u\Vert_{H^{1/2}(\Omega)} \leq C \Vert  F \Vert_{L^2(\Omega)},\quad \mathrm{ if}\;  \lambda \quad \mathrm{is\,\,  real }
\end{equation*}
 and for anr $r < 1$
\begin{equation*}\label{estim10-r}
  \vert \lambda \vert^{2r-1/2} \Vert u\Vert_{H^{1/2}(\Omega)} \leq C_r  \Vert  F \Vert_{L^2(\Omega)}, \quad \mathrm{ if}\;  \lambda \quad \mathrm{is\,\,  complex}.
\end{equation*}
 \end{remark}

Recall now that the following lemma (see \cite{AMN}).

\begin{lemma}\label{traces+Green} Let $\Omega$ be a  bounded open subset of class $ \mathcal{C}^{1,1}$ of $\R^N$. Then the space $\mathcal{D}(\overline{\Omega})$ is dense in the space
$$
E(\Omega, \Delta) = \{\varphi \in L^2(\Omega); \; \Delta \varphi \in L^2(\Omega)\}.
$$
Moreover, the linear mapping $\varphi\mapsto (\varphi_{\vert\Gamma},  \frac{\partial \varphi }{\partial\textit{\textbf n}})$ defined on  $\mathcal{D}(\overline{\Omega})$ can be continuously extended to a linear and continuous mapping from $E(\Omega, \Delta)$ to $H^{-1/2}(\Gamma)\times H^{-3/2}(\Gamma)$ and we have the following Green formula: for any $\varphi \in E(\Omega, \Delta)$ and $v\in H^2(\Omega)$
$$
\int_\Omega \varphi \Delta v - \int_\Omega v\Delta \varphi  = \langle \varphi , \, \frac{\partial v}{\partial\textit{\textbf n}}\rangle_{H^{-1/2}(\Gamma)\times H^{1/2}(\Gamma)} - 
\langle  \frac{\partial \varphi }{\partial\textit{\textbf n}}, \,\ v \rangle_{H^{-3/2}(\Gamma)\times H^{3/2}(\Gamma)}.
$$

\end{lemma}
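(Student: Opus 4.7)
The plan is the classical two-step approach: first prove density of $\mathcal{D}(\overline{\Omega})$ in $E(\Omega, \Delta)$ endowed with its graph norm $\|\varphi\|_E^2 = \|\varphi\|_{L^2(\Omega)}^2 + \|\Delta \varphi\|_{L^2(\Omega)}^2$, then extend the trace mappings and the Green formula by continuity using the standard lifting of $H^2$-traces.

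For the density, I would write $\varphi = \varphi_1 + \varphi_2$, where $\varphi_1 \in H^2(\Omega) \cap H^1_0(\Omega)$ is the unique solution of $\Delta \varphi_1 = \Delta \varphi$ in $\Omega$ with $\varphi_1 = 0$ on $\Gamma$ (existence and $H^2$-regularity are guaranteed by Theorem \ref{PDH, lambda=0} since $\Omega$ is $\mathcal{C}^{1,1}$), and $\varphi_2 := \varphi - \varphi_1 \in L^2(\Omega)$ is harmonic in $\Omega$. The piece $\varphi_1$ is approximated in the $E$-norm by the classical density of $\mathcal{D}(\overline{\Omega})$ in $H^2(\Omega)$. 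For $\varphi_2$, I would argue locally via a finite cover of $\Gamma$ by charts in which $\Omega$ is represented as an epigraph $\{x_N < \xi(x')\}$: on each chart, the shifted function $\varphi_2^\varepsilon(x', x_N) := \varphi_2(x', x_N - \varepsilon)$ is harmonic on an open neighbourhood of $\overline{\Omega}$, hence smooth up to the boundary, and $\varphi_2^\varepsilon \to \varphi_2$ in $L^2$ as $\varepsilon \downarrow 0$ with $\Delta \varphi_2^\varepsilon = 0 = \Delta \varphi_2$; a partition of unity then produces a global approximating sequence in $\mathcal{D}(\overline{\Omega})$.

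Once density is in hand, the trace extension follows from the fact that for $\Omega \in \mathcal{C}^{1,1}$ any pair $(\psi_0, \psi_1) \in H^{1/2}(\Gamma) \times H^{3/2}(\Gamma)$ lifts to some $v \in H^2(\Omega)$ with $\partial v/\partial \textit{\textbf n} = \psi_0$ and $v = \psi_1$ on $\Gamma$, satisfying $\|v\|_{H^2(\Omega)} \leq C (\|\psi_0\|_{H^{1/2}(\Gamma)} + \|\psi_1\|_{H^{3/2}(\Gamma)})$. For $\varphi \in \mathcal{D}(\overline{\Omega})$ the usual Green formula gives
\begin{equation*}
\int_\Gamma \varphi\,\psi_0 - \int_\Gamma \frac{\partial \varphi}{\partial \textit{\textbf n}}\,\psi_1 = \int_\Omega \varphi\,\Delta v - \int_\Omega v\,\Delta \varphi,
\end{equation*}
and the right-hand side is bounded by $\|\varphi\|_E \|v\|_{H^2(\Omega)} \leq C \|\varphi\|_E (\|\psi_0\|_{H^{1/2}(\Gamma)} + \|\psi_1\|_{H^{3/2}(\Gamma)})$. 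Thus the pair $(\varphi_{\vert\Gamma}, \partial \varphi/\partial \textit{\textbf n}_{\vert\Gamma})$ identifies with a continuous linear form on $H^{1/2}(\Gamma) \times H^{3/2}(\Gamma)$ of norm bounded by $C\|\varphi\|_E$, i.e.\ with an element of $H^{-1/2}(\Gamma) \times H^{-3/2}(\Gamma)$ with that estimate. The trace map then extends uniquely by continuity to all of $E(\Omega, \Delta)$, and the Green formula in the statement follows by passing to the limit along an approximating sequence $\varphi_n \to \varphi$ in the $E$-norm.

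The main obstacle is the density step, specifically the approximation of the harmonic $L^2$-component. Combining the local translations through the partition of unity produces cross terms of the form $2\nabla \chi \cdot \nabla \varphi_2^\varepsilon + \varphi_2^\varepsilon \Delta \chi$ in the Laplacian, which must be controlled in $L^2(\Omega)$; this requires the interior $H^1$-regularity of harmonic $L^2$-functions on strictly interior subsets, together with a careful choice of cutoffs that localize to each chart while leaving room between their supports and the parts of $\partial \Omega$ not covered by the chart itself. Everything downstream is a soft continuity/density argument.
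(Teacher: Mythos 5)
First, a remark on the comparison: the paper does not actually prove this lemma; it is recalled from \cite{AMN} and is classical (it is essentially the trace/Green-formula result for $E(\Omega,\Delta)$ in \cite{linsmag}), so there is no in-paper argument to measure you against. The second half of your proposal --- lifting an arbitrary pair $(\psi_0,\psi_1)\in H^{1/2}(\Gamma)\times H^{3/2}(\Gamma)$ to some $v\in H^2(\Omega)$, reading the boundary terms of the classical Green formula as a linear form bounded by $C\Vert\varphi\Vert_E$, and then extending the traces and the identity by density --- is correct and is the standard route.

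The genuine gap is in the density step, exactly at the point you yourself flag as ``the main obstacle''. After splitting off $\varphi_1\in H^2(\Omega)\cap H^1_0(\Omega)$ (fine, though the $H^2$ regularity on a $\mathcal{C}^{1,1}$ domain should be credited to \cite{grisvard} rather than to Theorem \ref{PDH, lambda=0}, which only reaches $H^{3/2}$ on Lipschitz domains), you are left with a harmonic $\varphi_2$ that is merely in $L^2(\Omega)$; in general $\nabla\varphi_2\notin \textit{\textbf L}^2(\Omega)$ (on the unit disk take $\mathrm{Re}\sum_n a_n z^n$ with $\sum_n |a_n|^2/n<\infty$ but $\sum_n n|a_n|^2=\infty$). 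The localization $\sum_j\chi_j\varphi_{2,j}^\varepsilon$ then produces the first-order commutator $2\nabla\chi_j\cdot\nabla\varphi_{2,j}^\varepsilon$, and since finitely many boundary charts are needed, the supports of the $\nabla\chi_j$ necessarily reach $\Gamma$: each $\chi_j$ must drop from $1$ to $0$ inside its own chart while $\sum_j\chi_j=1$ in a neighbourhood of $\Gamma$, so no choice of cutoffs can confine all the transition regions to the interior. Interior estimates for harmonic functions give no uniform-in-$\varepsilon$ control of $\nabla\varphi_{2,j}^\varepsilon$ in $L^2$ up to the boundary, and exploiting $\sum_j\nabla\chi_j=0$ to rewrite the commutator as differences of translates does not visibly save it. This is precisely the difference with the analogous density proof for $H(\mathrm{div};\Omega)$, where the commutator $\nabla\chi_j\cdot u$ is of order zero and harmless; for a second-order operator the translation-plus-partition-of-unity scheme breaks down. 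The standard repair is to abandon localization and prove density by duality: if $(f,g)\in L^2(\Omega)\times L^2(\Omega)$ satisfies $\int_\Omega f\varphi+\int_\Omega g\Delta\varphi=0$ for all $\varphi\in\mathcal{D}(\overline{\Omega})$, then the extensions by zero satisfy $\Delta\tilde g=-\tilde f$ in $\mathcal{D}'(\R^N)$, hence $\tilde g\in H^2(\R^N)$ and $g\in H^2_0(\Omega)$, and the functional is seen to vanish on all of $E(\Omega,\Delta)$. With density established this way, the rest of your argument goes through verbatim.
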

 
 \begin{theorem}\label{th2Bi} $\mathrm{(}${\bf Very Weak Estimate for} $(\mathcal{P}_N^H) \mathrm{)}$ Let $h\in H^{-1}(\Gamma)$ and suppose $\lambda $ satisfying the following condition:
\begin{equation}
\label{hyplambdah}
[\lambda \in \mathbb{R}^*\quad \mathrm{ with }\quad   \vert \lambda \vert \ge \lambda_0 \quad  \mathrm{ if }\; \langle h, 1\rangle \not= 0] \quad \mathrm{ or } \quad [ \lambda \in\mathbb{C} \quad \mathrm{ with}\quad   \mathrm{Re}\, \lambda \ge \omega  ], 
\end{equation}
with $\lambda_0 > 0$  and  $\omega > 0$ arbitrary. Then Problem $(\mathcal{P}_N^H)$ has a unique solution which satisfies 
\begin{equation*}
u\in H^{1/2}(\Omega),\quad u_{\vert\Gamma}\in L^2(\Gamma)
\end{equation*}
with the following estimates:
\begin{equation}\label{estim9}
  \vert \lambda \vert^{1/2} \Vert u\Vert_{L^2(\Omega)} +  \Vert u\Vert_{H^{1/2}(\Omega)}  + \Vert u\Vert_{L^2(\Gamma)} \leq C \Vert  h\Vert_{H^{-1}(\Gamma)},\quad \mathrm{ if}\;  \lambda \; \mathrm{is\,\,  real }
\end{equation}
and for any $r < 1$
\begin{equation}\label{estim9r}
  \vert \lambda \vert^{r-1/2} \Vert u\Vert_{L^2(\Omega)} +  \vert \lambda \vert^{3(r- 1)}\Vert  \Vert u\Vert_{H^{1/2}(\Omega)}  + \Vert u\Vert_{L^2(\Gamma)}  \leq C_r \Vert  h\Vert_{H^{-1}(\Gamma)}, \quad \mathrm{ if}\;  \lambda \; \mathrm{is\,\,  complex}.
\end{equation}
where $C$ and $C_r$ are not depending on $\lambda$.
\end{theorem}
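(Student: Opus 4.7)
\textit{Outline.} The plan is the transposition (very weak solution) method, using the estimates of Theorem~\ref{th2BisN} on the dual Neumann problem, then upgrading the boundary regularity via Theorems~\ref{th1} and \ref{th3}, and finally the interior regularity via the Laplace theory from Theorem~\ref{PDH, lambda=0}.

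\textbf{Step 1 (construction and $L^2(\Omega)$ bound).} For each $F\in L^2(\Omega)$ (of zero mean when $\langle h,1\rangle = 0$), Theorem~\ref{th2BisN} produces a unique solution $w_F$ of the adjoint problem $(\mathcal{P}_N^0)$ in $H^{3/2}(\Omega)$ and, in particular, supplies the trace bound
$$
|\lambda|^{1/2}\|w_F|_\Gamma\|_{H^1(\Gamma)}\leq C\|F\|_{L^2(\Omega)}
$$
in the real case (and the complex analogue weighted by $|\lambda|^{r-1/2}$). The form $L(F):= -\langle h,\, w_F|_\Gamma\rangle_{H^{-1}(\Gamma)\times H^1(\Gamma)}$ is then continuous on $L^2(\Omega)$, and the Riesz theorem yields $u\in L^2(\Omega)$, unique once its mean is fixed, with $\int_\Omega uF = L(F)$ for every admissible $F$, together with
$$
|\lambda|^{1/2}\|u\|_{L^2(\Omega)}\leq C\|h\|_{H^{-1}(\Gamma)}
$$
(and the corresponding $|\lambda|^{r-1/2}$-weighted bound in the complex case, which is exactly the $L^2(\Omega)$ part of \eqref{estim9r}).

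\textbf{Step 2 (PDE and Neumann trace).} Choosing $F = \lambda^2\varphi-\Delta\varphi$ with $\varphi\in\mathcal{D}(\Omega)$, so that $w_F = \varphi$ and $w_F|_\Gamma = 0$, forces $\lambda^2 u-\Delta u = 0$ in $\mathcal{D}'(\Omega)$. Hence $\Delta u\in L^2(\Omega)$ and Lemma~\ref{traces+Green}, applied after a smooth approximation of the Lipschitz $\Omega$, assigns $u|_\Gamma\in H^{-1/2}(\Gamma)$, $\partial u/\partial\mathbf{n}\in H^{-3/2}(\Gamma)$ and validates the Green formula. Testing then with arbitrary $\varphi\in H^2(\Omega)$ having $\partial\varphi/\partial\mathbf{n}=0$ identifies $\partial u/\partial\mathbf{n} = h$ in the dual sense.

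\textbf{Step 3 ($L^2(\Gamma)$ trace).} Given $\psi\in L^2(\Gamma)$ (with $\int_\Gamma\psi = 0$ in the real case, or $|\lambda|\geq\lambda_0$ otherwise), let $v_\psi\in H^{3/2}(\Omega)$ be the solution of $(\mathcal{P}_N^H)$ with datum $\psi$ provided by Theorem~\ref{th1} (real case) or Theorem~\ref{th3} combined with Theorem~\ref{th31} (complex case); these furnish $\|v_\psi|_\Gamma\|_{H^1(\Gamma)}\leq C\|\psi\|_{L^2(\Gamma)}$. The Green identity applied to $(u,v_\psi)$ has its bulk contributions cancel, because both functions solve the same Helmholtz equation, and collapses to
$$
\int_\Gamma u|_\Gamma\,\psi \;=\; \langle h,\, v_\psi|_\Gamma\rangle_{H^{-1}(\Gamma)\times H^1(\Gamma)}.
$$
Taking the supremum over $\psi$ of unit $L^2(\Gamma)$-norm produces $u|_\Gamma\in L^2(\Gamma)$ with $\|u|_\Gamma\|_{L^2(\Gamma)}\leq C\|h\|_{H^{-1}(\Gamma)}$.

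\textbf{Step 4 ($H^{1/2}(\Omega)$ regularity).} Split $u = u_1 + u_2$, where $u_1$ is the harmonic extension of the trace, i.e. the solution of $\Delta u_1 = 0$, $u_1|_\Gamma = u|_\Gamma\in L^2(\Gamma)$. Theorem~\ref{PDH, lambda=0}(i) with $s=1/2$ gives $u_1\in H^{1/2}(\Omega)$ with $\|u_1\|_{H^{1/2}(\Omega)}\leq C\|u|_\Gamma\|_{L^2(\Gamma)}\leq C\|h\|_{H^{-1}(\Gamma)}$. The remainder $u_2$ satisfies $u_2|_\Gamma = 0$ and $-\Delta u_2 = -\lambda^2 u\in L^2(\Omega)$, so Theorem~\ref{PDH, lambda=0}(iii) with $\theta = 1/2$ yields $u_2\in H^{3/2}(\Omega)$ with $\|u_2\|_{H^{3/2}(\Omega)}\leq C\|\sqrt{d}\,\lambda^2 u\|_{L^2(\Omega)}\leq C|\lambda|^2\|u\|_{L^2(\Omega)}$. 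Combining these bounds gives the stated $H^{1/2}(\Omega)$ estimates in both the real case \eqref{estim9} and the complex case \eqref{estim9r}, the $|\lambda|$-powers being tracked through the weighted $L^2(\Omega)$ bound from Step~1.

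\textbf{Uniqueness} follows at once from Step~1: any very weak solution with $h=0$ satisfies $\int_\Omega uF = 0$ for every admissible $F\in L^2(\Omega)$, and so $u\equiv 0$.

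\textit{Main difficulty.} The principal technical obstacle is Step~3, where the Green identity has to be justified with $u$ only in $L^2(\Omega)\cap H^{-1/2}(\Gamma)$ and $h$ only in $H^{-1}(\Gamma)$, on a merely Lipschitz domain. The natural remedy is to approximate $h$ by a sequence $h_n\in L^2(\Gamma)$ (preserving the compatibility condition) and the domain by smooth subdomains $\Omega_k$ as in the proof of Theorem~\ref{th1}, so as to deal with classical solutions $u_{n,k}$ for which the identity is transparent, and then to pass to the limit using the uniform estimates already produced in Steps~1 and~3.
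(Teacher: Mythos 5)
Your overall architecture --- transposition against the adjoint Neumann problem of Theorem \ref{th2BisN}, identification of the equation and of the Neumann trace via Lemma \ref{traces+Green}, the boundary $L^2(\Gamma)$ estimate by duality against the homogeneous problem of Theorems \ref{th1} and \ref{th31}, and the final reduction of the Lipschitz case to smooth approximating domains --- is the same as the paper's, and your Steps 1--3 are essentially correct. (One small point in Step 3: the pairing $\int_\Gamma u|_\Gamma\,\psi$ only makes a priori sense for $\psi\in H^{1/2}(\Gamma)$, since at that stage $u|_\Gamma$ is only known in $H^{-1/2}(\Gamma)$; you should take $\psi=f\in H^{1/2}(\Gamma)$ first and conclude by density of $H^{1/2}(\Gamma)$ in $L^2(\Gamma)$, as the paper does.)

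The genuine gap is in Step 4, in the bookkeeping of the powers of $\lambda$. You bound the correction $u_2$ (with $u_2|_\Gamma=0$ and $-\Delta u_2=-\lambda^2 u$) by the $\lambda$-independent Laplace--Dirichlet regularity of Theorem \ref{PDH, lambda=0}\,(iii), which yields only
$\Vert u_2\Vert_{H^{1/2}(\Omega)}\le \Vert u_2\Vert_{H^{3/2}(\Omega)}\le C\,\vert\lambda\vert^2\Vert u\Vert_{L^2(\Omega)}\le C\,\vert\lambda\vert^{3/2}\Vert h\Vert_{H^{-1}(\Gamma)}$
in the real case; interpolating this with the $L^2(\Omega)$ bound on $u_2$ still leaves at best a factor $\vert\lambda\vert^{1/2}$, whereas \eqref{estim9} requires $\Vert u\Vert_{H^{1/2}(\Omega)}\le C\Vert h\Vert_{H^{-1}(\Gamma)}$ with $C$ independent of $\lambda$. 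The factor $\lambda^2$ in the source cannot be absorbed by the $\lambda=0$ elliptic theory: it must be compensated by a resolvent estimate, i.e.\ by keeping the zeroth-order term $\lambda^2 u_2$ in the equation for the correction and invoking the $\lambda$-weighted interior estimates (Remark \ref{H1demi} for the Neumann problem, or Theorems \ref{th3BisD} and \ref{th3TerD} for the Dirichlet problem), which gain the needed negative powers of $\vert\lambda\vert$. This is precisely how the paper proceeds: it splits $u$ according to the Neumann data ($\Delta y=0$ with $\frac{\partial y}{\partial \textit{\textbf n}}=h$, estimated by Theorem \ref{PNH, lambda=0}) and controls the remaining piece through Theorem \ref{th2BisN} and Remark \ref{H1demi}, so that the bound $\vert\lambda\vert^{3/2}\Vert\cdot\Vert_{H^{1/2}(\Omega)}\le C\Vert F\Vert_{L^2(\Omega)}$ exactly offsets the $\vert\lambda\vert^{2}$ coming from the source $\lambda^2 u$ together with $\Vert u\Vert_{L^2(\Omega)}\le C\vert\lambda\vert^{-1/2}\Vert h\Vert_{H^{-1}(\Gamma)}$. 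As written, your Step 4 does not close, and the same defect propagates to the complex case \eqref{estim9r}.
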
  

\begin{proof} {\bf Step 1.} {\it We suppose that $\Omega$ is $\mathcal{C}^{1,1}$}. 

Firstly, observe that if $u\in L^2(\Omega)$ solves Problem $(\mathcal{P}_N^H)$, then $\Delta u \in L^2(\Omega)$ and consequently $ u_{\vert\Gamma}\in H^{-1/2}(\Gamma)$ and $\frac{\partial u}{\partial\textit{\textbf n}}\in  H^{-3/2}(\Gamma)$ since $\Omega$ is $\mathcal{C}^{1,1}$ (see \cite{ARB}).  
But, by hypothesis, the normal derivative of $u$ belongs to $ H^{-1}(\Gamma)$. So we will prove that $u\in L^2(\Gamma)$ with the estimate \eqref{estim9}. Moreover, to find  $u\in L^{2}(\Omega)$ solution of  problem $(\mathcal{P}_N^H)$ is in fact equivalent to solve the following very weak formulation: For any $w\in H^1(\Omega)$ such that $ \Delta w \in L^2(\Omega)$ and  $\frac{\partial w}{\partial\textit{\textbf n}} = 0$ on $\Gamma,$ \begin{equation}\label{vwf}
\int_{\Omega} u(\lambda^2 w - \Delta w) = \langle h, \, w \rangle_{H^{-1}(\Gamma)\times H^{1}(\Gamma)}.
\end{equation}
Observe that  with the assumptions concerning $w$, from Ne$\mathrm{\check{c}}$as property we get $w_{\vert\Gamma}\in H^1(\Gamma)$, which gives a sense for the above duality bracket. Choosing then the test function $w$ in $\mathcal{D}(\Omega)$, we obtain 
$$ 
\lambda^2 u - \Delta u = 0\quad \mathrm{ in}\;  \Omega .
$$
From which, after using Lemma \ref{traces+Green}, we deduce that for any $w\in H^2(\Omega)$ such that $\frac{\partial w}{\partial\textit{\textbf n}} = 0$ on $\Gamma$:
\begin{equation*}
\ds \langle \frac{\partial u}{\partial\textit{\textbf n}}, \, w \rangle_{H^{-3/2}(\Gamma)\times H^{3/2}(\Gamma)} = \langle h, \, w \rangle_{H^{-1}(\Gamma)\times H^{1}(\Gamma)}
\end{equation*}
and then 
\begin{equation*}
\frac{\partial u}{\partial\textit{\textbf n}} = h \quad \mathrm{on } \; \Gamma.
\end{equation*}
\medskip
 {\bf Step 2.}  {\it We suppose again that $\Omega$ is $\mathcal{C}^{1,1}$ and we will prove the interior estimate.}
 
To solve our problem remains then to ensure the existence of a unique $u\in L^2(\Omega)$ satisfying the above very weak formulation \eqref{vwf}. But using Theorem \ref{th2BisN}, for any $F\in L^2(\Omega)$, there exists a unique $w\in H^{3/2}(\Omega)$ solution of Problem \eqref{PN0}. Hence by \eqref{estim1Bis}
\begin{equation*} 
\left|  \int_{\Omega}u F\right| \leq \Vert h \Vert_{H^{-1}(\Gamma)}\Vert w \Vert_{H^{1}(\Gamma)} \leq C  \vert \lambda \vert^{-1/2}  \Vert h \Vert_{H^{-1}(\Gamma)}\Vert F \Vert_{L^{2}(\Omega)},
\end{equation*}
if $\lambda$ is real and by \eqref{estim1BisComp}, when $\lambda$ is complex, for any $r < 1$ 
\begin{equation*} 
\left|  \int_{\Omega}u F\right| \leq \Vert h \Vert_{H^{-1}(\Gamma)}\Vert w \Vert_{H^{1}(\Gamma)} \leq C_r  \vert \lambda \vert^{1/2-r}  \Vert h \Vert_{H^{-1}(\Gamma)}\Vert F \Vert_{L^{2}(\Omega)}.
\end{equation*}
Above, the constants $C$ and $C_r$ depend only on the Lipschitz character of $\Omega$. Moreover $C$ depends also on $\lambda_0$ when $  \langle h, 1\rangle \not= 0$ and $C_r$ depends also on $\omega$ when $\lambda$ is a complex number. In other words, the linear form
$$
T \; :\;  F \mapsto \langle h, \, w \rangle_{H^{-1}(\Gamma)\times H^{1}(\Gamma)}
$$
is continuous on $L^2(\Omega)$ and hence there exists a unique  $u\in L^2(\Omega)$ such that for any $F \in L^{2}(\Omega)$, 
$$
T(F) =  \int_{\Omega}u F
$$
which means that $u$ is solution of \eqref{vwf}. Moreover we have the following estimate 
\begin{equation}\label{estim10}
  \vert \lambda \vert^{1/2} \Vert u\Vert_{L^2(\Omega)} \leq C \Vert  h\Vert_{H^{-1}(\Gamma)},\quad \mathrm{ if}\;  \lambda \quad \mathrm{is\,\,  real }
\end{equation}
 and 
\begin{equation}\label{estim10r}
  \vert \lambda \vert^{r-1/2} \Vert u\Vert_{L^2(\Omega)} \leq C_r \Vert  h\Vert_{H^{-1}(\Gamma)}, \quad \mathrm{ if}\;  \lambda \quad \mathrm{is\,\,  complex}.
\end{equation}

Now, we will prove that $u\in H^{1/2}(\Omega)$ with the corresponding estimate. For this, we distinguish two cases.

\noindent {\bf i)} Suppose firstly that $\langle h, 1\rangle = 0$. So that $\int_\Omega u = 0$. We write now $u = v + y$ as  sum of solutions of the following problems:
\begin{equation*}
 \Delta v =   \lambda^2 u \quad \ \mbox{in}\ \Omega \quad
\mbox{and } \quad \frac{\partial v}{\partial\textit{\textbf n}} = 0 \ \ \mbox{on }\Gamma 
\end{equation*}
and
\begin{equation*}
 \Delta y =  0\quad \ \mbox{in}\ \Omega \quad
\mbox{and } \quad \frac{\partial y}{\partial\textit{\textbf n}} = h \ \ \mbox{on }\Gamma ,
\end{equation*}
where on one hand $v\in H^s(\Omega)\cap L^2_0(\Omega)$ for any $s < 3/2$ and  $y\in H^{1/2}(\Omega)\cap L^2_0(\Omega)$. On the other hand, using Theorem \ref{th2BisN}, Remark \ref{H1demi}, \eqref{estim10} and \eqref{estim10r} the solution $v$ satisfies the following estimate:
\begin{equation*}
  \vert \lambda \vert^{3/2} \Vert v\Vert_{H^{1/2}(\Omega)}  \leq C  \vert \lambda \vert^{2} \Vert u\Vert_{L^2(\Omega)} \leq C  \vert \lambda \vert^{3/2}\Vert  h\Vert_{H^{-1}(\Gamma)},\quad \mathrm{ if}\;  \lambda \; \mathrm{is\,\,  real }
\end{equation*}
 and for any $r < 1$
\begin{equation*}
  \vert \lambda \vert^{2r-1/2} \Vert v\Vert_{H^{1/2}(\Omega)} \leq C_r  \vert \lambda \vert^{2} \Vert u\Vert_{L^2(\Omega)}  \leq C_r  \vert \lambda \vert^{\frac{5}{2} - r }\Vert  h\Vert_{H^{-1}(\Gamma)}, \quad \mathrm{ if}\;  \lambda \; \mathrm{is\,\,  complex}.
\end{equation*}
So that we get
\begin{equation*}\label{estim10a}
\Vert v\Vert_{H^{1/2}(\Omega)}   \leq C \Vert  h\Vert_{H^{-1}(\Gamma)},\quad \mathrm{ if}\;  \lambda \; \mathrm{is\,  real }
\end{equation*}
and for any $r < 1$
\begin{equation*}\label{estim10ar}
\Vert v\Vert_{H^{1/2}(\Omega)}  \leq C_r  \vert \lambda \vert^{3(1- r )}\Vert  h\Vert_{H^{-1}(\Gamma)}, \quad \mathrm{ if}\;  \lambda \; \mathrm{is\,  complex},
\end{equation*}
where  the constants $C$ and $C_r$ depend only on the Lipschitz character of $\Omega$. 

Concerning the estimate of the solution $y$, from Theorem \ref{PNH, lambda=0}  we know that
\begin{equation*}\label{estim10ay}
\Vert y\Vert_{H^{1/2}(\Omega)}   \leq C \Vert  h\Vert_{H^{-1}(\Gamma)},\quad \mathrm{ if}\;  \lambda \; \mathrm{is\,\,  real \, or \, 
complex}.
\end{equation*}

\noindent{\bf ii)} Suppose now that $\langle h, 1\rangle \not= 0$ and setting $\widetilde{h} = h - \frac{1}{\vert \Gamma\vert}\langle h, 1\rangle$. The solution $u$ is then a sum of $\widetilde{u}$, solution of  $(\mathcal{P}_N^H)$ with the Neumann boundary condition $\widetilde{h}$ and of $a$,  solution of  $(\mathcal{P}_N^H)$ with the Neumann boundary condition $\frac{1}{\vert \Gamma\vert}\langle h, 1\rangle$. We finally prove easily the corresponding estimates  for these two solutions.

\medskip

\noindent{\bf Step 3.}  {\it We suppose always that $\Omega$ is $\mathcal{C}^{1,1}$ and we will prove the boundary estimate.}

For that, let $f\in H^{1/2}(\Gamma)$ and $z\in H^{2}(\Omega)$ the unique solution verifying 
\begin{equation*}\label{PNHbis}
 \lambda^2 z - \Delta z = 0\quad \ \mbox{in}\ \Omega \quad
\mbox{and } \quad \frac{\partial z}{\partial\textit{\textbf n}} = f \ \ \mbox{on }\Gamma .
\end{equation*}
From Theorem  \ref{th1}, Theorem \ref{th31} and Remark \ref{rq33}, the solution $z$ satisfies the estimate
\begin{equation}\label{estim11}
  \Vert \lambda z\Vert_{L^2(\Gamma)} + \Vert  z \Vert_{H^1(\Gamma)} +    \vert \lambda \vert^{1/2} \Vert  z \Vert_{H^1(\Omega)}  +  \Vert  z \Vert_{H^{3/2}(\Omega)} \leq C \Vert  f\Vert_{L^2(\Gamma)} 
\end{equation}
if $\lambda$ is real and or any $r < 1$, 
\begin{equation}\label{estim11r}
 \vert \lambda\vert^{r} \Vert z\Vert_{L^2(\Gamma)} + \Vert  z \Vert_{H^1(\Gamma)} +    \vert \lambda \vert^{r/2} \Vert  z \Vert_{H^1(\Omega)}  +  \vert \lambda \vert^{r-1} \Vert  z \Vert_{H^{3/2}(\Omega)} \leq C_r \Vert  f\Vert_{L^2(\Gamma)}, 
\end{equation}
if $\lambda$ is complex, where the constant $C$ and $C_r$ depend as in Step 2 only on the Lipschitz character of $\Omega$. Thanks to Lemma \ref{traces+Green}, we have
$$
\langle u ,\,  \frac{\partial z}{\partial\textit{\textbf n}}\rangle_{H^{-1/2}(\Gamma)\times H^{1/2}(\Gamma)} =  \langle h, \, z \rangle_{H^{-1}(\Gamma)\times H^{1}(\Gamma)},
$$
and then from \eqref{estim11} when $\lambda$ is real:
$$
\left| \langle u ,\,  f\rangle_{H^{-1/2}(\Gamma)\times H^{1/2}(\Gamma)} \right| \leq  C\Vert h \Vert_{H^{-1}(\Gamma)}\Vert f \Vert_{L^2(\Gamma)}.
$$
By \eqref{estim11r}, we give the same inequality in the complex case with $C$ replaced by $C_r$. So by density of $H^{1/2}(\Gamma)$ in $L^{2}(\Gamma)$, we  deduce that the previous estimate holds for any $f\in L^2(\Gamma)$ and then $u\in L^2(\Gamma)$ with the following estimate
\begin{equation}\label{estimudansL2Gamma-VW}
\Vert  u \Vert_{L^2(\Gamma)} \leq C \Vert h \Vert_{H^{-1}(\Gamma)} \quad \mathrm{if}\; \lambda \in \R \quad  \mathrm{and}\quad \Vert  u \Vert_{L^2(\Gamma)} \leq C_r \Vert h \Vert_{H^{-1}(\Gamma)}\quad \mathrm{if}\; \lambda \in \mathbb{C},
\end{equation}
with $\lambda$ satisfying \eqref{hyplambdah}.
\medskip

\noindent {\bf Step 4.}  As in the proof of Theorem \ref{th1}, we  can then extend the estimates \eqref{estim10}, \eqref{estim10r} and \eqref{estimudansL2Gamma-VW} to the case where $\Omega$ is only Lipschitz. \end{proof}

\begin{remark} \upshape The solution $u$ given in the previous theorem satisfies probably that $\sqrt \rho\,  \nabla u\in L^2(\Omega)$. But we dont give here the proof of this result.
\end{remark}

\begin{corollary} Let $h\in H^{-s}(\Gamma)$, with $0 < s < 1$, and suppose $\lambda $ satisfying the condition \eqref{hyplambdah}.
%\begin{equation}
%\label{hyplambdah}
%[\lambda \in \mathbb{R}^*\quad \mathrm{ with }\quad   \vert \lambda \vert \ge \lambda_0 \quad  \mathrm{ if }\; \langle h, 1\rangle \not= 0] \quad \mathrm{ or } \quad [ \lambda \in\mathbb{C} \quad \mathrm{ with}\quad   \mathrm{Re}\, \lambda \ge \omega  ], 
%\end{equation}
%with $\lambda_0 > 0$  and  $\omega > 0$ arbitrary. 
Then Problem $(\mathcal{P}_N^H)$ has a unique solution which satisfies 
\begin{equation}\label{existH3/2(1-s)}
u\in H^{\frac{3}{2}(1-s)}(\Omega)\quad and\quad u_{\vert\Gamma}\in H^{1-s}(\Gamma)
\end{equation}
with the following estimates:
\begin{equation*}\label{estim9Hs}
\begin{array}{rl}
  \vert \lambda \vert^{\frac{3}{2}- s} \Vert u\Vert_{L^2(\Omega)} & + \; \;  \vert \lambda \vert^{\frac{1}{2}} \Vert u\Vert_{H^{1-s}(\Omega)} +  \vert \lambda \vert^{\frac{s}{2}} \Vert u\Vert_{H^{\frac{3}{2}(1-s)}(\Omega)} +  \Vert u\Vert_{H^{\frac{3}{2}-s}(\Omega)} + \\\\
  & + \;  
\vert \lambda \vert^{1-s}   \Vert u\Vert_{L^2(\Gamma)} +  \Vert u\Vert_{H^{1-s}(\Gamma)} \leq C \Vert  h\Vert_{H^{-s}(\Gamma)},
\end{array}
\end{equation*}
 if $ \lambda$ is real and for any $r < 1$
\begin{equation*}\label{estim9rHs}
\begin{array}{rl}
 & \vert \lambda \vert^{\frac{1}{2}(r(3-s)-s)} \Vert u\Vert_{L^2(\Omega)}  + \vert \lambda \vert^{\frac{1}{2}(r(1+s)-s)} \Vert u\Vert_{H^{1-s}(\Omega)}  
   + \vert \lambda \vert^{r(2s-1)-\frac{3s}{2} + 1} \Vert u\Vert_{H^{\frac{3}{2}(1-s)}(\Omega)}+ \\\\ 
   & + \; \vert \lambda \vert^{r(4s-1)- 4s + 1} \Vert u\Vert_{H^{\frac{3}{2} -s}(\Omega)} + \; \vert \lambda \vert^{r(1-s)}   \Vert u\Vert_{L^2(\Gamma)}\;  + \;  \Vert u\Vert_{H^{1-s}(\Gamma)} \leq C_r \Vert  h\Vert_{H^{-s}(\Gamma)}.
  \end{array}
\end{equation*}
 if $ \lambda$ is complex. As above the constants $C$ and $C_r$ are not depending on $\lambda$.
\end{corollary}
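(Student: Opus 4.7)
The plan is to obtain the corollary purely by interpolation between the two endpoints already treated. At $s=0$ (that is, $h\in L^2(\Gamma)$) Theorem \ref{th1} in the real case, and Theorem \ref{th31} together with Remark \ref{rq33} in the complex case, provide a full set of $|\lambda|$-weighted estimates on $u$; at $s=1$ (that is, $h\in H^{-1}(\Gamma)$) the very weak bounds of Theorem \ref{th2Bi} do the same. Existence and uniqueness for $h\in H^{-s}(\Gamma)$ with $0<s<1$ are immediate from the embedding $H^{-s}(\Gamma)\hookrightarrow H^{-1}(\Gamma)$ and Theorem \ref{th2Bi}; the improved regularity $u\in H^{3(1-s)/2}(\Omega)$ with $u_{\vert\Gamma}\in H^{1-s}(\Gamma)$ announced in \eqref{existH3/2(1-s)} will come out as a by-product of the interpolated bounds themselves.

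For each admissible $\lambda$, the solution map $T_\lambda\colon h\mapsto u$ is linear and, by the above endpoint theorems, continuous from $L^2(\Gamma)$ into every target space that appears in Theorem \ref{th1}/\ref{th31}/Remark \ref{rq33}, and continuous from $H^{-1}(\Gamma)$ into every target space that appears in Theorem \ref{th2Bi}. Complex interpolation, together with the standard identifications $[L^2(\Gamma),H^{-1}(\Gamma)]_s = H^{-s}(\Gamma)$ and $[H^{a}(X),H^{b}(X)]_s = H^{(1-s)a+sb}(X)$ for $X\in\{\Omega,\Gamma\}$, and the fact that the interpolated operator norm is bounded by the geometric mean $M_0^{1-s}M_1^s$ of the endpoint norms, then delivers each line of the corollary: for every target norm listed, one picks the matching pair of endpoint estimates and reads off both the intermediate space and the intermediate weight in $|\lambda|$.

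Illustrating in the real case: pairing $|\lambda|^{3/2}\|u\|_{L^2(\Omega)}\le C\|h\|_{L^2(\Gamma)}$ with $|\lambda|^{1/2}\|u\|_{L^2(\Omega)}\le C\|h\|_{H^{-1}(\Gamma)}$ produces the weight $|\lambda|^{(3/2)(1-s)+(1/2)s}=|\lambda|^{3/2-s}$; pairing $\|u\|_{H^{3/2}(\Omega)}\le C\|h\|_{L^2(\Gamma)}$ with $|\lambda|^{1/2}\|u\|_{L^2(\Omega)}\le C\|h\|_{H^{-1}(\Gamma)}$ gives $|\lambda|^{s/2}\|u\|_{H^{3(1-s)/2}(\Omega)}\le C\|h\|_{H^{-s}(\Gamma)}$; pairing $\|u\|_{H^1(\Gamma)}\le C\|h\|_{L^2(\Gamma)}$ with $\|u\|_{L^2(\Gamma)}\le C\|h\|_{H^{-1}(\Gamma)}$ gives the boundary estimate $\|u\|_{H^{1-s}(\Gamma)}\le C\|h\|_{H^{-s}(\Gamma)}$; and the remaining terms are handled identically. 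In the complex case the two parameters are tied together by taking the parameter of Theorem \ref{th31} and Remark \ref{rq33} equal to $r/2$ (so that the constraint $<1/2$ there becomes precisely $r<1$ in the corollary) and the parameter of Theorem \ref{th2Bi} equal to $r$; the same pairing procedure yields the stated exponents, e.g.\ $|\lambda|^{3r/2}\|u\|_{L^2(\Omega)}$ at $s=0$ paired with $|\lambda|^{r-1/2}\|u\|_{L^2(\Omega)}$ at $s=1$ gives weight $|\lambda|^{(3r/2)(1-s)+(r-1/2)s}=|\lambda|^{(r(3-s)-s)/2}$.

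The only non-routine point is to make sure that complex interpolation is applicable with the stated uniformity in $\lambda$; this is legitimate because, for fixed admissible $\lambda$, $T_\lambda$ is a genuine bounded linear operator between the two Banach pairs, the constants $M_0,M_1$ absorb the $|\lambda|$-weights, and the Sobolev scale on the Lipschitz boundary $\Gamma$ is an interpolation scale in the relevant range. Everything else reduces to bookkeeping: pairing up endpoint estimates and computing the corresponding linear combination of the $|\lambda|$-exponents.
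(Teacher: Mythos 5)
Your proof follows essentially the same route as the paper: the authors likewise obtain the corollary by interpolating between the $L^2(\Gamma)$ endpoint estimates \eqref{estim1} (real case) resp.\ \eqref{estim1bbisr}, \eqref{estimabdb} (complex case) and the $H^{-1}(\Gamma)$ very weak estimates \eqref{estim9}, \eqref{estim9r}, combined with a density/passage-to-the-limit argument in $h$, and your pairings reproduce all the real-case exponents and most of the complex-case ones. The one discrepancy: for the $H^{\frac{3}{2}(1-s)}(\Omega)$ and $H^{\frac{3}{2}-s}(\Omega)$ terms in the complex case your (correctly computed) interpolated exponents are $r-1+\frac{s}{2}$ and $(r-1)(1+2s)$, which fall short of the stated $r(2s-1)-\frac{3s}{2}+1$ and $r(4s-1)-4s+1$ by exactly $2(1-r)(1-s)$; since both families of exponents have the same supremum as $r\uparrow 1$, the two formulations are equivalent after reparametrizing $r$, but for a fixed $r<1$ your bound is the weaker one, and the stated exponents do not appear reachable by any admissible choice of the endpoint parameters, which suggests a slip in the paper's bookkeeping rather than a gap in your argument.
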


\begin{proof} To prove the existence of the solution $u$ satisfying \eqref{existH3/2(1-s)}, it suffices to approach the distribution $h$ by a sequence $(h_k)$ of functions belonging to  $L^2(\Gamma)$,  to use the estimates of the corresponding solutions $u_k$ obtained by interpolation thanks to the estimates \eqref{estim9}, \eqref{estim1}   when $\lambda$ is real and  \eqref{estim9r}, \eqref{estim1bbisr} and \eqref{estimabdb} when $\lambda$ is complex and finally to pass to the limite. \end{proof}

%Finally, using \eqref{RellichBi} and \eqref{estim1bbisrincomp}, we get that $\Vert \nabla_{\mathscr{T}}u \Vert_{L^2(\Gamma)} \leq C(\Omega, \omega) \Vert  h\Vert_{L^2(\Gamma)}$ and then the required estimate.
%
%
%
%
%
%
%
%
%Since the constant $C$ does not depend on $k$, we get by passing to the limite  the following inequality
%\begin{equation}\label{estiminterste21comp}
% \vert \lambda\vert^{2} \Vert z\Vert_{L^2(\R^N)} + \vert \lambda\vert \Vert \nabla z\Vert_{L^2(\R^N)} \leq C \Vert  F \Vert_{L^2(\Omega)}
%\end{equation}
% Multiplying by $\Delta \overline{z}$, we obtain that $\Vert \Delta z\Vert_{L^2(\R^N)} \leq C \Vert  F \Vert_{L^2(\Omega)}$. Using then an interpolation argument and \eqref{estiminterste21comp}, we deduce that
% \begin{equation}\label{estimintersteH2H3/2comp}
% \Vert z\Vert_{H^2(\R^N)} + \vert \lambda\vert^{1/2} \Vert  z\Vert_{H^{3/2}(\R^N)} \leq C \Vert  F \Vert_{L^2(\Omega)}.
%\end{equation}
%The rest of the proof is similar to that of the real case.
%

\subsection{Dirichlet Case}

Given $F\in L^2(\Omega)$, consider now the following Dirichlet problem:
\begin{equation*}\label{PD0}
(\mathcal{P}_D^0)\ \ \ \ \lambda^2 w - \Delta w = F\quad \ \mbox{in}\ \Omega \quad
\mbox{and } \quad w = 0 \ \ \mbox{on }\Gamma.
\end{equation*}
Let us introduce the following space
\begin{equation*}
\mathscr{T}^2_{-3/2}(\Omega) =  \left\{ v\in H^{3/2}_0(\Omega);\;  \rho^{1/2}\nabla^2 v \in \L^2(\Omega)\right\},
\end{equation*}
which is an Hilbert space for his graph norm and the following lemma (see \cite{AMN}).  

\begin{lemma} The following inequality holds:
\begin{equation*}\label{inegT2-3/2bis}
\forall v\in \mathscr{T}^2_{-3/2}(\Omega), \quad \Vert v \Vert_{H^{3/2}_{0}(\Omega)} + \Vert \rho^{1/2} \nabla^2 v   \Vert_{L^2(\Omega)} \leq  C\Vert \rho^{1/2} \Delta v  \Vert_{L^2(\Omega)}.
\end{equation*}
\end{lemma}

Using Theorem \ref{th2}, we can prove with the same ideas that for Theorem \ref{th2BisN} the following result.

\begin{theorem}\label{th3BisD} $\mathrm{(}${\bf Resolvent Interior Estimates I for} $(\mathcal{P}_D^0) \mathrm{)}$ Let $F \in L^2(\Omega)$ and suppose $\lambda $ satisfying the following condition:
\begin{equation*}\label{hyplambdaFD}
\lambda \in \mathbb{R}^*\quad  or  \quad  \lambda \in\mathbb{C} \quad  with\quad   \mathrm{Re}\, \lambda \ge \omega, 
\end{equation*}
with arbitrary $\omega > 0$. Then Problem $(\mathcal{P}_D^0)$ has a unique solution  $w\in H^{3/2}_0(\Omega)$, with $\sqrt \rho  \nabla^2 w\in L^2(\Omega), \frac{\partial w}{\partial\textit{\textbf n}}\in L^2(\Gamma)$ and satisfying the following estimate
\begin{equation}\label{estim1BisD}
\begin{array}{rl}
\vert \lambda \vert^{2} \,  \Vert w\Vert_{L^2(\Omega)} & +  \;  \vert \lambda \vert^{3/2} \;  \Vert \sqrt \rho  \nabla w\Vert_{L^2(\Omega)}  +\;   \vert \lambda \vert  \Vert  w\Vert_{H^1(\Omega)} + \vert \lambda \vert^{1/2} \Vert w\Vert_{H^{3/2}(\Omega)} + \\\\
&   +  \;   \vert \lambda \vert^{1/2}\Vert \sqrt \rho  \nabla^2 w\Vert_{L^2(\Omega)}  + \; \vert \lambda \vert^{1/2} \, \Vert \frac{\partial w}{\partial\textit{\textbf n}}\Vert_{L^2(\Gamma)}   \leq C \Vert  F\Vert_{L^2(\Omega)}
\end{array}
\end{equation}
if $\lambda$ is real. 
In the complex case, for any $r < 1$, there exists a constant $C_r$ depending only on $r$, $\Omega$ and $\omega$ such that we have following inequality
\begin{equation}\label{estim1BisCompD}
\begin{array}{rl}
\vert \lambda \vert^{3r-1} \,  \Vert  w \Vert_{L^2(\Omega)} & + \; \;  \vert \lambda \vert^{2r-1}  \Vert  w\Vert_{H^1(\Omega)}  + \;   \vert\lambda\vert^{\frac{5}{2}r-2} \Vert  w\Vert_{H^{3/2}(\Omega)} \; + \\\\ 
& \;  + \vert\lambda\vert^{\frac{5}{2}r-2}\Vert \sqrt \rho  \nabla^2 w\Vert_{L^2(\Omega)} + \vert \lambda \vert^{\frac{3}{2}r-1}\Vert \frac{\partial w}{\partial\textit{\textbf n}}\Vert_{L^2(\Gamma)}   \leq C_r \Vert  F \Vert_{L^2(\Omega)}.
\end{array}
\end{equation}
\end{theorem}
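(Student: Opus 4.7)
The plan is to adapt the proof of Theorem \ref{th2BisN} to the Dirichlet setting, replacing the final application of Theorem \ref{th1} (resp.\ Theorem \ref{th31}) by an application of Theorem \ref{th2} (resp.\ an analog for complex $\lambda$). First, extend $F$ by zero to $\widetilde F\in L^2(\R^N)$ and set $z := E_\lambda \star \widetilde F$, where $E_\lambda$ is the fundamental solution of $\lambda^2 I-\Delta$. Then $z\in H^2(\R^N)$ solves $\lambda^2 z - \Delta z = \widetilde F$ with $\widehat z(\xi) = C_N(\lambda^2+|\xi|^2)^{-1}\widehat{\widetilde F}(\xi)$, so by testing against $z$ and $-\Delta z$ (or directly by Plancherel) one obtains in the real case
\begin{equation*}
|\lambda|^2\,\|z\|_{L^2(\R^N)} + |\lambda|\,\|\nabla z\|_{L^2(\R^N)} + \|\Delta z\|_{L^2(\R^N)} \leq C\|F\|_{L^2(\Omega)},
\end{equation*}
and by interpolation $|\lambda|^{1/2}\|z\|_{H^{3/2}(\R^N)} \leq C\|F\|_{L^2(\Omega)}$.

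Next, the trace inequality $\|\varphi\|_{L^2(\Gamma)}^2 \leq C(\varepsilon^{-1}\|\varphi\|_{L^2(\Omega)}^2 + \varepsilon\|\varphi\|_{H^1(\Omega)}^2)$ with $\varepsilon = |\lambda|^{-1}$, applied successively to $z$ and to $\nabla z$, gives
\begin{equation*}
|\lambda|^{3/2}\|z\|_{L^2(\Gamma)} + |\lambda|^{1/2}\|\nabla z\|_{L^2(\Gamma)} \leq C\|F\|_{L^2(\Omega)},
\end{equation*}
so that the Dirichlet trace $g := z_{|\Gamma}$ satisfies $|\lambda|\,\|g\|_{L^2(\Gamma)} + \|g\|_{H^1(\Gamma)} \leq C|\lambda|^{-1/2}\|F\|_{L^2(\Omega)}$. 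Then $v := z_{|\Omega} - w$ solves $\lambda^2 v - \Delta v = 0$ in $\Omega$ with $v = g$ on $\Gamma$, and Theorem \ref{th2} yields
\begin{equation*}
|\lambda|^{3/2}\|v\|_{L^2(\Omega)} + |\lambda|\|\sqrt d\,\nabla v\|_{L^2(\Omega)} + |\lambda|^{1/2}\|v\|_{H^1(\Omega)} + \|v\|_{H^{3/2}(\Omega)} + \|\sqrt d\,\nabla^2 v\|_{L^2(\Omega)} + \left\|\tfrac{\partial v}{\partial \textit{\textbf n}}\right\|_{L^2(\Gamma)} \leq C|\lambda|^{-1/2}\|F\|_{L^2(\Omega)}.
\end{equation*}
Since $w = z_{|\Omega} - v$ and $\partial_n w = \partial_n z - \partial_n v$, combining the two chains of estimates proves \eqref{estim1BisD}, the normal derivative contribution from $z$ being controlled by $|\lambda|^{1/2}\|\nabla z\|_{L^2(\Gamma)}\leq C\|F\|_{L^2(\Omega)}$.

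For the complex case, the construction of $z$ is identical but the a priori bounds must be upgraded via the iteration scheme of Steps 1--2 of the proof of Theorem \ref{th31}: testing against $\overline z$, splitting into real and imaginary parts of $\lambda^2$, and combining with the identity $\lambda^2\|z\|_{L^2}^2 = -\|\nabla z\|_{L^2}^2 + \int F\overline z$ iteratively, one obtains $|\lambda|^{2r}\|z\|_{L^2(\R^N)} + |\lambda|^r\|\nabla z\|_{L^2(\R^N)} \leq C_r\|F\|_{L^2(\Omega)}$ for any $r<1$, and hence by $\Delta z = \lambda^2 z - \widetilde F$ and interpolation $|\lambda|^{2r-3/2}\|z\|_{H^{3/2}(\R^N)} \leq C_r\|F\|_{L^2(\Omega)}$, together with the corresponding trace bounds $|\lambda|^{2r-1/2}\|z\|_{L^2(\Gamma)} + |\lambda|^{r-1/2}\|\nabla z\|_{L^2(\Gamma)} \leq C_r\|F\|_{L^2(\Omega)}$.

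The main obstacle is that the last step requires a complex resolvent estimate for $(\mathcal{P}_D^H)$ analogous to Theorem \ref{th31} but for Dirichlet data $g\in H^1(\Gamma)$, which is not explicitly recorded in the excerpt. Such an estimate can be established in one of two ways: either by a duality argument exploiting the self-adjointness of the Dirichlet-to-Neumann map together with Theorem \ref{th31} and Remark \ref{rq33}, or by rerunning the Rellich-type identity \eqref{RellichBi} in combination with the same iteration with Young's inequality used in Theorem \ref{th31}, yielding, for any $r<1$,
\begin{equation*}
|\lambda|^{\tfrac{3}{2}r}\|v\|_{L^2(\Omega)} + |\lambda|^{\tfrac{1}{2}r}\|v\|_{H^1(\Omega)} + |\lambda|^{r-1}\|v\|_{H^{3/2}(\Omega)} + |\lambda|^{r-1}\|\sqrt d\,\nabla^2 v\|_{L^2(\Omega)} + \left\|\tfrac{\partial v}{\partial\textit{\textbf n}}\right\|_{L^2(\Gamma)} \leq C_r\bigl(|\lambda|\|g\|_{L^2(\Gamma)}+\|g\|_{H^1(\Gamma)}\bigr).
\end{equation*}
Feeding in the bounds on $g = z_{|\Gamma}$ (which give the right-hand side $\leq C_r|\lambda|^{1/2-r}\|F\|_{L^2(\Omega)}$) and combining with the estimates on $z_{|\Omega}$ yields \eqref{estim1BisCompD} after rearranging the powers of $|\lambda|$; the appearance of the exponents $3r-1$, $2r-1$, $\tfrac{5}{2}r-2$, $\tfrac{3}{2}r-1$ in the statement reflects exactly this bookkeeping.
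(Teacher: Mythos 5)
Your real-case argument is essentially the same lifting strategy as the paper's (extend $F$ by zero, solve on $\R^N$, correct by the homogeneous problem $v=z_{|\Omega}-w$ with boundary data $z_{|\Gamma}$ and apply Theorem \ref{th2}), and it delivers the same conclusions. The one structural difference is that the paper first tests $\lambda^2 w-\Delta w=F$ directly against $w$ and $-\Delta w$ in $\Omega$ -- the homogeneous Dirichlet condition kills all boundary terms -- to get $\max\{|\lambda|^2\Vert w\Vert_{L^2(\Omega)},\,|\lambda|\Vert\nabla w\Vert_{L^2(\Omega)},\,\Vert\Delta w\Vert_{L^2(\Omega)}\}\leq\Vert F\Vert_{L^2(\Omega)}$ and then the Rellich identity \eqref{Rellich} applied to $w$ itself to get $|\lambda|^{1/2}\Vert\frac{\partial w}{\partial\textit{\textbf n}}\Vert_{L^2(\Gamma)}\leq C\Vert F\Vert_{L^2(\Omega)}$; the extension $z$ is only used afterwards for the $H^{3/2}$-type pieces. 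Your version recovers these bounds by adding the two chains of estimates, which works, though note that the weighted term $|\lambda|^{3/2}\Vert\sqrt d\,\nabla w\Vert_{L^2(\Omega)}$ is not controlled by your $z$-estimates alone ($\Vert\nabla z\Vert_{L^2}\leq|\lambda|^{-1}\Vert F\Vert$ only gives $|\lambda|^{1/2}\Vert F\Vert$ there); a weighted identity of the type \eqref{energieavecd} applied in $\Omega$ is needed for that term, a point the paper's own write-up also passes over quickly.

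The substantive issue is the complex case. There you again route everything through $z$ and the correction $v$, which forces you to invoke a complex-$\lambda$ resolvent boundary estimate for $(\mathcal{P}_D^H)$ with $H^1(\Gamma)$ data. You correctly flag that this is not available at this point -- it is in effect Theorem \ref{th31D}, proved later in the paper -- and you only sketch how to obtain it. This is where you miss the shortcut the paper actually uses: because $w=0$ on $\Gamma$, the energy identity reads $\lambda^2\Vert w\Vert^2_{L^2(\Omega)}+\Vert\nabla w\Vert^2_{L^2(\Omega)}=\int_\Omega F\overline{w}$ with a \emph{volume} data term, so the real/imaginary splitting and the Young-inequality iteration of Step 1--2 of Theorem \ref{th31} can be run directly on $w$ in $\Omega$, giving $|\lambda|^{2r}\Vert w\Vert_{L^2(\Omega)}+|\lambda|^{r}\Vert\nabla w\Vert_{L^2(\Omega)}\leq C_r\Vert F\Vert_{L^2(\Omega)}$ with no extension, no trace of $z$, and no auxiliary Dirichlet boundary estimate. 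The equation $\Delta w=\lambda^2 w-F$ then bounds $\Vert\Delta w\Vert_{L^2(\Omega)}$, the Rellich identity bounds $\Vert\frac{\partial w}{\partial\textit{\textbf n}}\Vert_{L^2(\Gamma)}$, and \eqref{estimabe} (with $w_{|\Gamma}=0$) gives the $H^{3/2}$ and $\sqrt d\,\nabla^2 w$ terms. So your complex-case plan is not wrong in principle -- your proposed Rellich-plus-iteration proof of the auxiliary estimate is essentially the later proof of Theorem \ref{th31D} -- but as written it leaves a nontrivial ingredient unproved and is considerably more roundabout than the direct argument that the Dirichlet boundary condition makes available; you should either prove that auxiliary estimate in full or, better, run the iteration on $w$ directly.
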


\begin{proof}

Clearly Problem $(\mathcal{P}_D^0)$ has a  unique solution $w\in H^{1}_0(\Omega)$. As $ \Delta w \in L^2(\Omega)$, then $w\in H^{3/2}(\Omega)$ and $\sqrt \rho \,  \nabla^2 w \in L^2(\Omega)$.  

\noindent{\bf i) Case $\lambda$ real.}  Multiplying the equation $\lambda^2 w - \Delta w = F$ successively by $w$ and  $-\Delta w$, we get
$$
\mathrm{Max}\{\vert\lambda\vert^2 \Vert w\Vert_{L^2(\Omega)},\, \vert\lambda\vert \Vert \nabla w\Vert_{L^2(\Omega)},\, \Vert \Delta w\Vert_{L^2(\Omega)}\} \leq  \Vert F\Vert_{L^2(\Omega)}.
$$
Using Rellich equality \eqref{Rellich} we deduce that
\begin{equation*}\label{estidernormFL2}
\vert \lambda \vert^{1/2} \, \left\|\frac{\partial w}{\partial\textit{\textbf n}}\right\|_{L^2(\Gamma)}   \leq C \Vert  F\Vert_{L^2(\Omega)}.
\end{equation*}
As in the proof of Theorem \ref{th2BisN}, we extending $F$ by $0$ outside of $\Omega$ and we consider the solution $z $ belonging to $H^2(\R^N)$ and satisfying $\lambda^2 z - \Delta z = \widetilde{F}\quad \mathrm{ in }\; \R^N$with the estimates \eqref{estim102}, \eqref{estim103} and \eqref{estim104}. Setting then  $v = z_{\vert\Omega} - w$, we have  $\lambda^2 v - \Delta v = 0$ in $\Omega$ and $v = z$ on $\Gamma$. So by Theorem \ref{th2} and \eqref{estim3} we have $v\in H^{3/2}(\Omega),\quad \sqrt \rho \,  \nabla^2 v \in L^2(\Omega), \quad \frac{\partial v}{\partial\textit{\textbf n}}\in L^2(\Gamma).$ As a consequence the function $w$ satisfies the same properties and moreover
\begin{equation*}\label{estim3bb}
\begin{array}{rl}
\vert \lambda \vert^{3/2} \,  \Vert v\Vert_{L^2(\Omega)} & + \; \;   \vert \lambda \vert^{1/2}  \Vert  u\Vert_{H^1(\Omega)} + \Vert v\Vert_{H^{3/2}(\Omega)} + \; \Vert \sqrt \rho  \nabla^2 v\Vert_{L^2(\Omega)} + \left\| \frac{\partial v}{\partial \textbf{\textit n}}\right\|_{L^2(\Gamma)} \\\\
&   \leq C(\Omega) \left( \vert \lambda \vert \; \Vert  z \Vert_{L^2(\Gamma)}  + \Vert z \Vert_{H^1(\Gamma)} \right)\leq C  \vert \lambda \vert^{-1/2}  \Vert  F\Vert_{L^2(\Omega)}.
\end{array}
\end{equation*}
With the above estimates on $z$ we find the required estimate \eqref{estim1BisD}.

\noindent{\bf ii) Case $\lambda$ complex.} Multiplying by $\overline{w}$, we have the relation
\begin{equation*}\label{egaliteenergiebbiscompvar}
\lambda^2 \Vert w\Vert^2_{L^2(\Omega)} + \Vert \nabla w\Vert^2_{L^2(\Omega)} = \int_{\Omega} F\overline{w},
\end{equation*}
from which we deduce as in Step 1 of the proof of Theorem \ref{th31} that
\begin{equation*}
\vert \lambda\vert \Vert w\Vert^2_{L^2(\Omega)} +\Vert \nabla w\Vert^2_{L^2(\Omega)} \leq C \Vert  F \Vert_{L^2(\Omega)} \Vert w\Vert_{L^2(\Omega)} 
\end{equation*}
and then
\begin{equation}\label{inegaliteenergiebbiscompbis}
\vert \lambda\vert \Vert w\Vert_{L^2(\Omega)} + \vert \lambda\vert^{1/2} \Vert \nabla w\Vert_{L^2(\Omega)} \leq C \Vert  F \Vert_{L^2(\Omega)}.
\end{equation}
Writing the above equality as follows
$$
\lambda^2 \Vert w\Vert^2_{L^2(\Omega)} = -  \Vert \nabla w\Vert^2_{L^2(\Omega)} + \int_{\Omega} F\overline{w},
$$
and taking the module of the both sides of the previous equality, we get thanks to \eqref{inegaliteenergiebbiscompbis} the following estimate 
\begin{equation*}\label{estiminter3/23/4acomp4}
 \vert \lambda\vert^{3/2} \Vert w\Vert_{L^2(\Omega)} + \vert \lambda\vert^{3/4} \Vert \nabla w\Vert_{L^2(\Omega)} \leq C \Vert  F \Vert_{L^2(\Omega)}.
\end{equation*}
Repeating this reasoning, we find at Step $k$, with $k\geq 2$, the following estimate
\begin{equation*}\label{estiminterstepk3}
 \vert \lambda\vert^{r_k} \Vert w\Vert_{L^2(\Omega)} + \vert \lambda\vert^{\frac{1}{2}r_k} \Vert \nabla w\Vert_{L^2(\Omega)} \leq C \Vert  F \Vert_{L^2(\Omega)}
\end{equation*}
with 
$$
r_k = 1 +  \frac{1}{2}r_{k-1}, \; k \ge 1\quad\mathrm{ and  } \quad  r_0 = 0.
$$
We verify easily that for any $k \geq 1$, we have $r_k < 2$ and $\ds \lim_{k\rightarrow \infty}r_{k} = 2.$ We deduce that for any $r < 1$, there exists a constant $C_r$ depending only on $\Omega$, $\omega$ and $r$ such that we have the following estimate
\begin{equation}\label{estim2bbisrincomp}
 \vert \lambda\vert^{2r} \Vert w\Vert_{L^2(\Omega)} + \vert \lambda\vert^{r} \Vert \nabla w\Vert_{L^2(\Omega)} \leq C_r \Vert  F \Vert_{L^2(\Omega)}.
\end{equation} 
As $\Delta w = \lambda^2 w - F$, we get from \eqref{estim2bbisrincomp}
\begin{equation*}\label{estim2bbisDeltarincompvar}
  \vert \lambda\vert^{2(r-1)} \Vert \Delta w\Vert_{L^2(\Omega)} \leq C_r \Vert  F \Vert_{L^2(\Omega)}.
\end{equation*} 
From Rellich equality \eqref{Rellich}, we have
\begin{equation*}
\left\|\frac{\partial w}{\partial\textit{\textbf n}}\right\|^2_{L^2(\Gamma)} \leq C\Vert \nabla w\Vert^2_{L^2(\Omega)} + \Vert \nabla w\Vert_{L^2(\Omega)}\Vert \Delta w\Vert_{L^2(\Omega)}.
\end{equation*} 
So that by using \eqref{estim2bbisrincomp} and \eqref{estim2bbisDeltarincompvar} we get easily the following estimate
\begin{equation*}\label{estidernormFL2r}
\vert \lambda \vert^{\frac{3}{2}r-1} \, \left\|\frac{\partial w}{\partial\textit{\textbf n}} \right\|_{L^2(\Gamma)}   \leq C \Vert  F\Vert_{L^2(\Omega)}.
\end{equation*}
Finally thanks to \eqref{estim1bbisr} and \eqref{estimabdb} we deduce the required estimate \eqref{estim1BisCompD}.
\end{proof}
%Setting again $v = z_{\vert\Omega} - w$ and using the estimate \eqref{estidernormFL2r} and \eqref{estisurzL2GH2O} we find
%\begin{equation}\label{estidernormFL2rv}
%\vert \lambda \vert^{\frac{3}{2}r-1} \, \Vert \frac{\partial v}{\partial\textit{\textbf n}}\Vert_{L^2(\Gamma)}   \leq C \Vert  F\Vert_{L^2(\Omega)}.
%\end{equation}
%The required estimate can be obtained thanks to estimate \eqref{estim1bbisr} of Theorem \ref{th31},   estimate \eqref{estimabdb} of Remark \ref{rq33} and the estimates on $z$ given in the proof of Theorem \ref{th2BisN} .
%

\begin{theorem}\label{th3TerD} $\mathrm{(}${\bf Resolvent Interior Estimates II for} $(\mathcal{P}_D^0) \mathrm{)}$ Let $\sqrt \rho F \in L^2(\Omega)$ and suppose $\lambda \in\mathbb{R}^* $ or $\lambda \in\mathbb{C} $ with  $\mathrm{Re}\, \lambda \ge \omega $ for some arbitrary $\omega > 0$. Then Problem $(\mathcal{P}_D^0)$ has a unique solution $w\in H^{3/2}_0(\Omega)$, with $\sqrt \rho  \nabla^2 w\in L^2(\Omega), \frac{\partial w}{\partial\textit{\textbf n}}\in L^2(\Gamma)$. \\
i) This solution satisfies the following estimate
\begin{equation*}\label{estim1TerD}
%\begin{array}{rl}
\vert \lambda \vert^{3/2} \,  \Vert w\Vert_{L^2(\Omega)}  +\;   \vert \lambda \vert^{1/2}  \Vert  w\Vert_{H^1(\Omega)} + \Vert w\Vert_{H^{3/2}(\Omega)} \; +  \;  \Vert \sqrt \rho  \nabla^2 w\Vert_{L^2(\Omega)}   \leq C \Vert \sqrt \rho F\Vert_{{L^2(\Omega)}}.
%\end{array}
\end{equation*}
if $\lambda$ is real. If in addition $F\in H^{-1/2}(\Omega)$, then
\begin{equation}\label{estimDerivNormL2}
\left\|\frac{\partial w}{\partial\textit{\textbf n}}\right\|_{L^2(\Gamma)} \leq C \left( \Vert \sqrt \rho F\Vert_{{L^2(\Omega)}} + \Vert F\Vert_{{H^{-1/2}(\Omega)}} \right).
\end{equation} 
ii) In the complex case, for any $r < 1$, there exists a constant $C_r$ depending only on $r$, $\Omega$ and $\omega$ such that we have following inequality
\begin{equation*}\label{estim1TerE}
\begin{array}{rl}
\vert \lambda \vert^{\frac{3}{2}r} \,  \Vert w\Vert_{L^2(\Omega)} & +\;   \vert \lambda \vert^{\frac{1}{2}r}  \Vert  w\Vert_{H^1(\Omega)} +  \\\\
&   +  \;  \vert \lambda \vert^{r-1}( \Vert w\Vert_{H^{3/2}(\Omega)} + \Vert \sqrt \rho  \nabla^2 w\Vert_{L^2(\Omega)})     \leq C_r \Vert \sqrt \rho F\Vert_{{L^2(\Omega)}}.
\end{array}
\end{equation*}
 If in addition $F\in H^{-1/2}(\Omega)$, then
 \begin{equation*}\label{estimDerivNormL2bis}
\left\|\frac{\partial w}{\partial\textit{\textbf n}}\right\|_{L^2(\Gamma)} \leq C\vert \lambda \vert^{1-r}( \Vert \sqrt \rho F\Vert_{{L^2(\Omega)}} + \Vert F\Vert_{{H^{-1/2}(\Omega)}}).
\end{equation*} 

\end{theorem}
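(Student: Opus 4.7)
\noindent\textit{Proof proposal.} I would follow the skeleton of the proof of Theorem~\ref{th3BisD}, but systematically replace the unweighted multipliers by $d$-weighted ones so that $\sqrt{d}\,F$ drives the right-hand side throughout, and close the $H^{3/2}$-estimate through Lemma~\ref{inegT2-3/2}. Existence is handled first: the weighted Hardy inequality $\|\varphi/\sqrt{d}\|_{L^2}\le C\|\varphi\|_{L^2}^{1/2}\|\nabla\varphi\|_{L^2}^{1/2}$ for $\varphi\in H^1_0(\Omega)$ shows that $F$ defines a continuous linear form on $H^1_0(\Omega)$; hence $\lambda^2 I-\Delta$ with Dirichlet boundary condition, which is an isomorphism $H^1_0\to H^{-1}$ under the hypotheses on $\lambda$, produces a unique $w\in H^1_0(\Omega)$. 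The further regularities $w\in H^{3/2}_0(\Omega)$ with $\sqrt{d}\,\nabla^2 w\in L^2(\Omega)$ will emerge from Lemma~\ref{inegT2-3/2} once $\sqrt{d}\,\Delta w\in L^2$ is controlled, and $\partial_n w\in L^2(\Gamma)$ will follow from Theorem~\ref{Necas Property}.

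In the real case the estimates proceed in three steps. \emph{(i) Duality for $\|w\|_{L^2}$.} For $\varphi\in L^2(\Omega)$, let $\psi\in H^{3/2}_0(\Omega)$ solve $\lambda^2\psi-\Delta\psi=\varphi$, $\psi=0$ on $\Gamma$; Green's formula yields $\int_\Omega w\overline{\varphi}=\int_\Omega F\overline{\psi}$, and
$$\Bigl|\int_\Omega F\overline{\psi}\Bigr|\le\|\sqrt{d}F\|_{L^2}\|\psi/\sqrt{d}\|_{L^2}\le C\|\sqrt{d}F\|_{L^2}\|\psi\|_{L^2}^{1/2}\|\nabla\psi\|_{L^2}^{1/2}.$$
Plugging in $|\lambda|^2\|\psi\|_{L^2}+|\lambda|\|\nabla\psi\|_{L^2}\le C\|\varphi\|_{L^2}$ from Theorem~\ref{th3BisD} and dualizing gives $|\lambda|^{3/2}\|w\|_{L^2}\le C\|\sqrt{d}F\|_{L^2}$. \emph{(ii) Energy for $\|\nabla w\|_{L^2}$.} Multiplying the equation by $\overline{w}$ and controlling $|\int F\overline{w}|\le\|\sqrt{d}F\|_{L^2}\|w/\sqrt{d}\|_{L^2}\le C\|\sqrt{d}F\|_{L^2}\|w\|_{L^2}^{1/2}\|\nabla w\|_{L^2}^{1/2}$, one inserts (i) and applies Young to obtain $|\lambda|^{1/2}\|\nabla w\|_{L^2}\le C\|\sqrt{d}F\|_{L^2}$. \emph{(iii) Weighted Laplacian estimate.} Multiplying the equation by $-d\,\Delta\overline{w}$ and integrating by parts (using $w|_\Gamma=0$) yields
$$\|\sqrt{d}\,\Delta w\|_{L^2}^2+\lambda^2\|\sqrt{d}\,\nabla w\|_{L^2}^2=-\int_\Omega dF\Delta\overline{w}-\lambda^2\int_\Omega w\,\nabla d\cdot\nabla\overline{w}.$$
The first right-hand term is $\le\|\sqrt{d}F\|_{L^2}\|\sqrt{d}\,\Delta w\|_{L^2}$ and the second, using $\nabla d\in L^\infty(\Omega)$ and (i)--(ii), is $\le C|\lambda|^2\|w\|_{L^2}\|\nabla w\|_{L^2}\le C\|\sqrt{d}F\|_{L^2}^2$. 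Young then gives $\|\sqrt{d}\,\Delta w\|_{L^2}\le C\|\sqrt{d}F\|_{L^2}$, and Lemma~\ref{inegT2-3/2} concludes $\|w\|_{H^{3/2}}+\|\sqrt{d}\,\nabla^2 w\|_{L^2}\le C\|\sqrt{d}\,\Delta w\|_{L^2}\le C\|\sqrt{d}F\|_{L^2}$.

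The normal derivative estimate \eqref{estimDerivNormL2} (when additionally $F\in H^{-1/2}(\Omega)$) is obtained from a Rellich identity applied to $w$ with a vector field $\mathbf{h}$ whose normal component is bounded below on $\Gamma$: since $\nabla_{\mathscr{T}}w=0$ on $\Gamma$, one controls $\int_\Gamma|\partial_n w|^2$ by $C\|\nabla w\|_{L^2}^2+2\bigl|\int_\Omega(\mathbf{h}\cdot\nabla w)(\lambda^2 w-F)\bigr|$, and the duality pairing is $\le\|F\|_{H^{-1/2}(\Omega)}\|w\|_{H^{3/2}(\Omega)}$, all three quantities being controlled. The complex case runs on the same template with exponent bookkeeping: in step (i) one picks the parameter $r_0=(2+3r)/5<1$ of Theorem~\ref{th3BisD}, for which $5r_0/2-1=3r/2$, giving $|\lambda|^{3r/2}\|w\|_{L^2}\le C_r\|\sqrt{d}F\|_{L^2}$; step (ii) combines the energy identity with its imaginary part, as in Step~1 of the proof of Theorem~\ref{th31}, to recover $|\lambda|\|w\|_{L^2}^2+\|\nabla w\|_{L^2}^2\le C|\int F\overline{w}|$ and hence $|\lambda|^{r/2}\|\nabla w\|_{L^2}\le C_r\|\sqrt{d}F\|_{L^2}$; in step (iii) the same multiplier produces $\|\sqrt{d}\,\Delta w\|_{L^2}\le C_r|\lambda|^{1-r}\|\sqrt{d}F\|_{L^2}$ (since $|\lambda|^2\|w\|_{L^2}\|\nabla w\|_{L^2}\le C|\lambda|^{2-2r}\|\sqrt{d}F\|_{L^2}^2$), which becomes the claimed $|\lambda|^{r-1}\bigl(\|w\|_{H^{3/2}}+\|\sqrt{d}\,\nabla^2 w\|_{L^2}\bigr)\le C_r\|\sqrt{d}F\|_{L^2}$ after Lemma~\ref{inegT2-3/2}.

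The main obstacle is step (iii). The naive bound $\|\sqrt{d}\,\Delta w\|_{L^2}\le\lambda^2\|\sqrt{d}\,w\|_{L^2}+\|\sqrt{d}F\|_{L^2}$, combined with the $L^2$-estimate of (i), yields only $C|\lambda|^{1/2}\|\sqrt{d}F\|_{L^2}$ in the real case, which would destroy the $\lambda$-uniformity of the final $H^{3/2}$-bound. The multiplier $-d\,\Delta\overline{w}$ is the decisive device: it replaces the deficient quantity $\lambda^2\|\sqrt{d}\,w\|_{L^2}$ by the product $|\lambda|^2\|w\|_{L^2}\|\nabla w\|_{L^2}$, whose two factors exactly compensate the exponents $|\lambda|^{-3/2}$ and $|\lambda|^{-1/2}$ inherited from steps (i)--(ii).
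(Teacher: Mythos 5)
Your proposal is correct and reaches all the stated estimates with the right exponents; the core devices (the Hardy-type inequality $\Vert w/\sqrt d\Vert_{L^2}\le C\Vert w\Vert_{L^2}^{1/2}\Vert\nabla w\Vert_{L^2}^{1/2}$, the energy identity for $\Vert\nabla w\Vert_{L^2}$, the multiplier $-d\,\Delta\overline w$ for the weighted Laplacian, and the Rellich identity paired with the duality $\langle F,\textbf{\textit h}\cdot\nabla w\rangle_{H^{-1/2}\times H^{1/2}}$ for the normal derivative) are exactly those of the paper. Where you genuinely diverge is the $L^2(\Omega)$ bound on $w$: the paper obtains $\vert\lambda\vert^{3/2}\Vert w\Vert_{L^2}\le C\Vert\sqrt d F\Vert_{L^2}$ directly by multiplying by $w$, applying Hardy and Young (and, in the complex case, by iterating the ``take the modulus of $\lambda^2\Vert w\Vert^2=-\Vert\nabla w\Vert^2+\int_\Omega F\overline w$'' bootstrap), whereas you transpose against the solution of the adjoint Dirichlet problem and import Theorem \ref{th3BisD}, calibrating $r_0=(2+3r)/5$ in the complex case. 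Your route is shorter and makes the reliance on the unweighted theorem explicit, at the cost of not being self-contained (the paper's bootstrap is in any case the engine behind Theorem \ref{th3BisD}); you also close the $H^{3/2}$ estimate through the lemma on $\mathscr{T}^2_{-3/2}(\Omega)$, where the paper invokes \eqref{estimabe} --- equivalent here since $w=0$ on $\Gamma$. One point you should make explicit: in the complex version of your step (iii) the coefficient $\lambda^2$ of $\Vert\sqrt d\,\nabla w\Vert_{L^2}^2$ has a real part that may be negative, so you must take the same $\mathrm{Re}+\alpha\vert\mathrm{Im}\vert$ combination as in Step 1 of the proof of Theorem \ref{th31} in order to keep that term on the left with a positive coefficient before discarding it; estimating it crudely by its modulus and moving it to the right would only give $\Vert\sqrt d\,\Delta w\Vert_{L^2}\le C\vert\lambda\vert^{1-r/2}\Vert\sqrt d F\Vert_{L^2}$, which is not enough for the claimed $\vert\lambda\vert^{r-1}$ bound on $\Vert w\Vert_{H^{3/2}(\Omega)}+\Vert\sqrt d\,\nabla^2 w\Vert_{L^2(\Omega)}$.
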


\begin{proof} For the existence and the uniqueness of the solution  $w\in H^{3/2}_0(\Omega)$, with $\sqrt \rho  \nabla^2 w\in L^2(\Omega)$, see \cite{AMN}. 
%For simplicity, we consider here only the case where $\lambda$ is real. The complex case may be investigate like above.

\noindent{\bf i) Real case.} Concerning the estimates, after multiplying by $w$, we get
\begin{equation}\label{1}
\displaystyle \vert \lambda \vert^{2} \,  \Vert w\Vert^2_{L^2(\Omega)} + \Vert \nabla w\Vert^2_{L^2(\Omega)} \leq \Vert \sqrt \rho F\Vert_{{L^2 (\Omega)}} 
\left\| \frac{w}{\sqrt \rho}\right\|_{L^2(\Omega)}.
\end{equation}
But, thanks to Hardy's inequality, we have
\begin{equation}\label{2}
\displaystyle \left\| \frac{w}{\sqrt \rho}\right\|_{L^2(\Omega)} \leq \Vert \frac{w}{\rho}\Vert^{1/2}_{L^2(\Omega)}\Vert w\Vert^{1/2}_{L^2(\Omega)}\leq C(\Omega) \Vert \nabla w\Vert^{1/2}_{L^2(\Omega)} \Vert w\Vert^{1/2}_{L^2(\Omega)}.
\end{equation}
So, using Young's inequality we firstly deduce that
\begin{equation}\label{3}
\vert \lambda \vert^{3/2} \,  \Vert w\Vert_{L^2(\Omega)}  \leq C(\Omega) \Vert \sqrt \rho F\Vert_{{L^2(\Omega)}}.
\end{equation}
Secondly, from \eqref{1}-{\eqref3}, we then obtain the following estimate
\begin{equation}\label{4}
\vert \lambda \vert^{1/2} \,  \Vert \nabla w\Vert_{L^2(\Omega)} \leq C(\Omega) \Vert \sqrt \rho F\Vert_{{L^2(\Omega)}}.
\end{equation}
Multiplying now by $-\rho\Delta w$ the equation $\lambda^2 w - \Delta w = F$, we get after integration by parts the following relation
$$
\displaystyle  \lambda^{2} \,  \Vert \sqrt \rho \nabla w\Vert^2_{L^2(\Omega)}  +  \Vert \sqrt \rho \Delta w\Vert^2_{L^2(\Omega)}  = \int_{\Omega} - F  \rho \Delta w -   \lambda^{2} \ \int_{\Omega} w \nabla \rho\cdot \nabla w.
$$
It follows by \eqref{3} and  \eqref{4} that
\begin{equation*}\label{4a}
\vert \lambda \vert \, \Vert \sqrt \rho  \nabla w\Vert_{L^2(\Omega)}  +  \Vert \sqrt \rho  \Delta w\Vert_{L^2(\Omega)}  \leq C(\Omega)\Vert \sqrt \rho F\Vert_{L^2(\Omega)}.
 \end{equation*}
As $F\in H^{-1}(\Omega)$, we see by Poincar\'e's inequality that
$$
\displaystyle  \lambda^{2} \,  \Vert w\Vert^2_{L^2(\Omega)} + \Vert \nabla w\Vert^2_{L^2(\Omega)}  = \langle F, w \rangle \leq C(\Omega)\Vert F\Vert_{{H^{-1} (\Omega)}} \Vert \nabla w\Vert_{L^2(\Omega)}
$$
which yields to
\begin{equation}\label{1a}
\Vert  w\Vert_{H^1(\Omega)} \leq C(\Omega)\Vert F\Vert_{{H^{-1} (\Omega)}}  \leq C(\Omega) \Vert \sqrt \rho F\Vert_{{L^2(\Omega)}}.
\end{equation}
Using \eqref{estimabe}, we deduce that
\begin{equation}\label{5}
\Vert w\Vert_{H^{3/2}(\Omega)} +  \Vert \sqrt \rho  \nabla^2 w\Vert_{L^2(\Omega)}  \leq C \Vert \sqrt \rho F\Vert_{L^2(\Omega)}.
\end{equation}
For the last estimate concerning the normal derivative of $w$, we start by use Rellich identity \eqref{Rellich}. It follows that
\begin{equation*}\label{6a}
\ds \left\|\frac{\partial w}{\partial\textit{\textbf n}}\right\|^2_{L^2(\Gamma)} \leq C \left( \ds \lambda^{2} \,  \Vert \nabla w\Vert_{L^2(\Omega)}  \Vert  w\Vert_{L^2(\Omega)}  + \Vert  \textbf{\textit h}\cdot \nabla w \Vert_{H^{1/2}(\Omega)} \Vert F\Vert_{H^{-1/2}(\Omega)} +  \Vert \nabla w\Vert^2_{L^2(\Omega)}\right).
\end{equation*}
The estimate \eqref{estimDerivNormL2} is then a consequence of the regularity $\mathcal{C}^\infty(\overline{\Omega})$ of the vector field $\textbf{\textit h}$ and the estimates \eqref{5}, \eqref{1a}, \eqref{4} and \eqref{3}.

\noindent{\bf ii) Case $\lambda$ complex.} Multiplying by $\overline{w}$ and using \eqref{2}, we get the following inequality
\begin{equation*}
\vert \lambda\vert \Vert w\Vert^2_{L^2(\Omega)} +\Vert \nabla w\Vert^2_{L^2(\Omega)} \leq C_\omega \Vert  \sqrt \rho\, F \Vert_{L^2(\Omega)} \Vert \frac{w}{ \sqrt \rho}\Vert_{L^2(\Omega)} \leq  C_\omega \Vert  \sqrt \rho\, F \Vert_{L^2(\Omega)} \Vert\Vert \nabla w\Vert^{1/2}_{L^2(\Omega)}\Vert w\Vert^{1/2}_{L^2(\Omega)}.
\end{equation*}
So, using Young's inequality we deduce that
\begin{equation*}\label{31}
\vert \lambda\vert \Vert w\Vert^2_{L^2(\Omega)} +\Vert \nabla w\Vert^2_{L^2(\Omega)} \leq C_\omega  \Vert \sqrt \rho F\Vert^{4/3}_{{L^2(\Omega)}}\Vert w\Vert_{L^2(\Omega)}^{2/3}.
\end{equation*}
and then 
\begin{equation*}\label{inegaliteenergiebbiscompBis}
\vert \lambda\vert ^{3/4} \Vert w\Vert_{L^2(\Omega)} \leq C_\omega  \Vert \sqrt \rho F\Vert .
\end{equation*}
From this last estimate, it follows that
\begin{equation}\label{inegaliteenergiebbiscompTer}
\vert \lambda\vert ^{1/4}\Vert \nabla w\Vert_{L^2(\Omega)} \leq C_\omega  \Vert \sqrt \rho F\Vert .
\end{equation}
In particular, we have also
\begin{equation*}\label{inegaliteenergiebbiscompQuart}
\vert \lambda\vert^{1/2} \left\|\frac{w}{\sqrt \rho}\right\|_{L^2(\Omega)} \leq C_\omega  \Vert \sqrt \rho F\Vert_{L^2(\Omega)} .
\end{equation*}
We then proceed as in the proof of Theorem \ref{th3TerD}. We know that
$$
\lambda^2 \Vert w\Vert^2_{L^2(\Omega)} = -  \Vert \nabla w\Vert^2_{L^2(\Omega)} + \int_{\Omega} F\overline{w}.
$$
So taking the module of the both sides of the previous equality, we get thanks to \eqref{inegaliteenergiebbiscompTer} and \eqref{inegaliteenergiebbiscompQuart} the following estimate 
\begin{equation}\label{estiminter3/23/4acomp0}
 \vert \lambda\vert^{5/4} \Vert w\Vert_{L^2(\Omega)} + \vert \lambda\vert^{5/12} \Vert \nabla w\Vert_{L^2(\Omega)} \leq C \Vert \sqrt \rho  F \Vert_{L^2(\Omega)}.
\end{equation}
Repeating this reasoning, we find  the following estimate: for any $r < 1$
\begin{equation*}\label{estiminterstepBis}
 \vert \lambda\vert^{\frac{3}{2}r} \Vert w\Vert_{L^2(\Omega)} + \vert \lambda\vert^{\frac{1}{2}r} \Vert \nabla w\Vert_{L^2(\Omega)} +  \vert \lambda\vert ^{r} \left\| \frac{w}{\sqrt \rho}\right\|_{L^2(\Omega)} \leq C_r \Vert \sqrt \rho  F \Vert_{L^2(\Omega)}.
\end{equation*}
Multiplying by $-d\Delta \overline{w}$ the equation $\lambda^2 w - \Delta w = F$ and using the above estimates we get
\begin{equation*}\label{4abc}
\vert \lambda \vert \, \Vert \sqrt \rho  \nabla w\Vert^2_{L^2(\Omega)}  +  \Vert \sqrt \rho  \Delta w\Vert^2_{L^2(\Omega)}  \leq C( \Vert \sqrt \rho F\Vert_{L^2(\Omega)}\Vert \sqrt \rho  \Delta w\Vert_{L^2(\Omega)} +  \vert \lambda \vert^2 \Vert w\Vert_{L^2(\Omega)}\Vert \nabla w\Vert_{L^2(\Omega)}).
 \end{equation*}
From this last inequality,  \eqref{estiminter3/23/4acomp0} and \eqref{estimabe}, we deduce that
\begin{equation*}\label{5aa}
\Vert w\Vert_{H^{3/2}(\Omega)} +  \Vert \sqrt \rho  \nabla^2 w\Vert_{L^2(\Omega)}  \leq C_r \vert \lambda\vert^{1-r}\Vert \sqrt \rho F\Vert_{L^2(\Omega)}.
\end{equation*}
Proceeding as in the real case, it follows that 
\begin{equation*}\label{6ab}
\left\| \frac{\partial w}{\partial\textit{\textbf n}}\right\|_{L^2(\Gamma)}^2 \leq C_r \left(\vert \lambda\vert ^{2-2r}\, \Vert \sqrt \rho  F \Vert^2_{L^2(\Omega)} + \vert \lambda\vert ^{1-r} \Vert \sqrt \rho  F \Vert_{L^2(\Omega)}\Vert F\Vert_{{H^{-1/2}(\Omega)}}\right),
\end{equation*}
\emph{i.e} 
\begin{equation*}\label{6abc}
\left\|\frac{\partial w}{\partial\textit{\textbf n}}\right\|_{L^2(\Gamma)} \leq C_r \left(\vert \lambda\vert ^{1-r}\ \Vert \sqrt \rho  F \Vert_{L^2(\Omega)} +  \Vert F\Vert_{{H^{-1/2}(\Omega)}}\right).
\end{equation*}
\end{proof}

\begin{theorem}\label{th4} $\mathrm{(}${\bf Resolvent Boundary Estimates for} $(\mathcal{P}_D^H))$ Let $g\in H^1(\Gamma)$ and suppose $\lambda \in\mathbb{C} $ with  $\mathrm{Re}\, \lambda \ge \omega$ for some arbitrary $\omega > 0$. Then Problem $(\mathcal{P}_D^H)$ has a unique solution which satisfies 
\begin{equation}\label{regulariteD}
u\in H^{3/2}(\Omega),\quad \sqrt {\rho} \, \nabla^2 u \in {\textbf{\textit L}}^2(\Omega) \quad and \quad \frac{\partial u}{\partial \textbf{\textit n}}\in L^2(\Gamma).
\end{equation}
\end{theorem}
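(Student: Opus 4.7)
The plan is to reduce the homogeneous Helmholtz problem with nonzero Dirichlet data to a Dirichlet problem with zero boundary condition and an $L^2$ right-hand side, for which the regularity is already furnished by Theorem \ref{th3BisD}.

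First I would establish existence and uniqueness of a weak solution $u\in H^1(\Omega)$. Since $\mathrm{Re}\,\lambda\ge\omega>0$, the sesquilinear form
\[
a(u,\varphi)=\int_\Omega\lambda^2 u\overline{\varphi}+\nabla u\cdot\nabla\overline{\varphi}
\]
is coercive on $H^1_0(\Omega)$ (using that either $\mathrm{Re}(\lambda^2)$ or $|\mathrm{Im}(\lambda^2)|$ is bounded below by a multiple of $|\lambda|$, exactly as in Step 1 of the proof of Theorem \ref{th31}), so after subtracting any $H^1$-lift of $g$ one obtains a unique solution $u\in H^1(\Omega)$ with $u_{|\Gamma}=g$.

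Next, I would introduce a lift that already has the regularity we want at the boundary. By Theorem \ref{PDH, lambda=0}(i) applied with $\lambda=0$ and $s=3/2$, since $g\in H^1(\Gamma)$, there is a unique harmonic function $\Phi\in H^{3/2}(\Omega)$ with $\Phi_{|\Gamma}=g$ and $\sqrt d\,\nabla^2\Phi\in\textit{\textbf L}^2(\Omega)$. Moreover, by the Ne\v{c}as property (Theorem \ref{Necas Property}), $\dfrac{\partial\Phi}{\partial\textit{\textbf n}}\in L^2(\Gamma)$. Setting $v=u-\Phi$, this function belongs to $H^1_0(\Omega)$ and satisfies
\[
\lambda^2 v-\Delta v=-\lambda^2\Phi\quad\text{in }\Omega,\qquad v=0\quad\text{on }\Gamma,
\]
with right-hand side $F:=-\lambda^2\Phi\in L^2(\Omega)$ since $\Phi\in H^{3/2}(\Omega)\subset L^2(\Omega)$.

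Now I would apply Theorem \ref{th3BisD} to $v$: in the complex case $\mathrm{Re}\,\lambda\ge\omega$, this theorem yields $v\in H^{3/2}_0(\Omega)$ with $\sqrt d\,\nabla^2 v\in L^2(\Omega)$ and $\dfrac{\partial v}{\partial\textit{\textbf n}}\in L^2(\Gamma)$. Adding the two contributions $u=v+\Phi$, each piece has the claimed regularity, and this gives \eqref{regulariteD}. I expect no substantive obstacle in this argument, since all the heavy lifting (the Rellich-type trace estimate for the nonhomogeneous Dirichlet problem at $\lambda=0$ inside Ne\v{c}as' property, and the interior estimate for the complex Helmholtz-Dirichlet problem with $L^2$ source in Theorem \ref{th3BisD}) has already been carried out; the only mildly delicate point is verifying that the lift $\Phi$ and the correction $v$ may be combined without losing the $\sqrt d$-weighted second-derivative regularity, which follows directly from linearity of the spaces involved.
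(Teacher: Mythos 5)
Your argument is correct, but it takes a considerably longer route than the one the paper intends. The paper leaves Theorem \ref{th4} without a written proof because it follows in two lines from the Ne\v{c}as property, exactly as in the opening paragraph of the proof of Theorem \ref{th2}: the unique weak solution $u\in H^1(\Omega)$ satisfies $\Delta u=\lambda^2 u\in L^2(\Omega)$ and $u_{\vert\Gamma}=g\in H^1(\Gamma)$, so Theorem \ref{Necas Property}, parts i) and ii), immediately yields $\frac{\partial u}{\partial \textbf{\textit n}}\in L^2(\Gamma)$, $u\in H^{3/2}(\Omega)$ and $\sqrt d\,\nabla^2 u\in \textit{\textbf L}^2(\Omega)$, with no decomposition needed. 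Your detour through the harmonic lift $\Phi$ and Theorem \ref{th3BisD} is logically sound and non-circular (Theorem \ref{th3BisD} is established independently of Theorem \ref{th4}), and it has the side benefit of producing an explicit, if non-optimal, bound on $u$ in terms of $\vert\lambda\vert^{2}\Vert \Phi\Vert_{L^2(\Omega)}$ and hence of $g$; but it imports the full quantitative machinery of the nonhomogeneous Dirichlet problem (Rellich identities, the whole-space fundamental solution, the bootstrapping in $\vert\lambda\vert$) to prove a purely qualitative regularity statement that the Ne\v{c}as property already delivers. Note also that your lift $\Phi$ is itself obtained from Theorem \ref{PDH, lambda=0} with $s=3/2$, whose proof is again the Ne\v{c}as property in disguise, so in the end both arguments rest on the same lemma; the direct route simply applies it once, to $u$ itself.
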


\begin{theorem}\label{th31D} $\mathrm{(}${\bf Resolvent Boundary Estimates for $(\mathcal{P}_D^H)$} Let $g\in H^1(\Gamma)$ and suppose $\lambda \in\mathbb{C} $ with  $\mathrm{Re}\, \lambda \ge \omega $ for some arbitrary $\omega > 0$. Then for any real number $ r < 1$, there exists a positive constant $C_r$, depending only on $r$, $\Omega$ and $\omega$, such that the solution given by the previous theorem satisfies the following estimate:
\begin{equation*}\label{estim1bbisrD}
\begin{array}{rl}
\vert \lambda \vert^{\frac{3}{2}r} \,  \Vert u\Vert_{L^2(\Omega)}  &+\;   \vert \lambda \vert^{\frac{1}{2}r}  \Vert  u\Vert_{H^1(\Omega)}  + \;  \Vert  \frac{\partial u}{\partial \textbf{\textit n}}\Vert_{L^2(\Gamma)} +\\\\
& +  \;  \vert \lambda \vert^{r-1}( \Vert u\Vert_{H^{3/2}(\Omega)} + \Vert \sqrt \rho  \nabla^2 u\Vert_{L^2(\Omega)})  \leq  C_r (\vert \lambda \vert \, \Vert  g\Vert_{L^2(\Gamma)} + \Vert g\Vert_{H^1(\Gamma)}).
\end{array}
\end{equation*}
\end{theorem}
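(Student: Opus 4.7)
The plan is to follow the same two-stage template used for the real Dirichlet case (Theorem \ref{th2}), adapted to complex $\lambda$ by invoking the complex Neumann resolvent bounds from Theorem \ref{th31} and Remark \ref{rq33}. First I would bound the normal derivative on $\Gamma$ with the correct $|\lambda|$--weights; then I would reinterpret $u$ as the Neumann solution carrying that datum, and import the resulting interior and boundary estimates.

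\textbf{Step 1 (Rellich-type boundary control).} For $\mathcal{C}^{1,1}$ domains and $g\in H^{3/2}(\Gamma)$, multiply $\lambda^2 u - \Delta u = 0$ by $\textit{\textbf h}\cdot\nabla\bar u$ with a smooth vector field satisfying $\textit{\textbf h}\cdot\textit{\textbf n}\ge \alpha>0$, integrate by parts, and take real parts. This complex Rellich identity isolates $\int_\Gamma|\partial u/\partial\textit{\textbf n}|^2$ against the trace quantities $|\nabla_{\mathscr T}g|^2$ and $\mathrm{Re}(\lambda^2)|g|^2$, together with interior terms. Simultaneously, the energy identity
\begin{equation*}
\lambda^2\|u\|^2_{L^2(\Omega)}+\|\nabla u\|^2_{L^2(\Omega)}=\int_\Gamma\bar g\,\frac{\partial u}{\partial \textit{\textbf n}}
\end{equation*}
handled by splitting real and imaginary parts exactly as in Step 1 of the proof of Theorem \ref{th31}, yields
\begin{equation*}
\|\nabla u\|^2_{L^2(\Omega)}+|\lambda|\,\|u\|^2_{L^2(\Omega)}\le C(\omega)\,\|g\|_{L^2(\Gamma)}\,\left\|\frac{\partial u}{\partial \textit{\textbf n}}\right\|_{L^2(\Gamma)}.
\end{equation*}
Combining these and absorbing by Young's inequality gives the key boundary bound
\begin{equation*}
\left\|\frac{\partial u}{\partial \textit{\textbf n}}\right\|_{L^2(\Gamma)}\le C(\Omega,\omega)\bigl(|\lambda|\,\|g\|_{L^2(\Gamma)}+\|g\|_{H^1(\Gamma)}\bigr).
\end{equation*}
The passage to Lipschitz $\Omega$ and $g\in H^1(\Gamma)$ reuses the six-step exhaustion-and-limit argument from the proof of Theorem \ref{th2}: write $\Omega$ as an epigraph, approximate by smooth $\Omega_k$, lift $g$ to $g_k=\chi(x_N)u(\textit{\textbf x}',\xi_k(\textit{\textbf x}'))$, pass to weak limits of $\partial u_k/\partial\textit{\textbf n}_k$ in $L^2$, and finally approximate $g\in H^1(\Gamma)$ by smoother data.

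\textbf{Step 2 (Neumann bootstrap).} Setting $h:=\partial u/\partial\textit{\textbf n}\in L^2(\Gamma)$, the function $u$ is the unique solution of $(\mathcal{P}_N^H)$ with Neumann datum $h$. Since the complex case of Theorem \ref{th31} imposes no zero-average constraint on $h$, applying it with parameter $r'=r/2<1/2$ (for $r<1$) yields
\begin{equation*}
|\lambda|^{3r/2}\|u\|_{L^2(\Omega)}+|\lambda|^{r/2}\|u\|_{H^1(\Omega)}+|\lambda|^{r}\|u\|_{L^2(\Gamma)}+\|\nabla_{\mathscr T}u\|_{L^2(\Gamma)}\le C_r\|h\|_{L^2(\Gamma)}.
\end{equation*}
Remark \ref{rq33} with the same $r'=r/2$ supplies $\|u\|_{H^{3/2}(\Omega)}+\|\sqrt d\,\nabla^2 u\|_{L^2(\Omega)}\le C_r|\lambda|^{1-r}\|h\|_{L^2(\Gamma)}$, which after multiplication by $|\lambda|^{r-1}$ delivers the last pair of terms. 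Inserting the Step 1 bound for $\|h\|_{L^2(\Gamma)}$ produces the claimed inequality on the right-hand side.

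\textbf{Main obstacle.} The delicate point is Step 1: the complex Rellich identity carries a boundary contribution proportional to $\mathrm{Re}(\lambda^2)|u|^2$ whose sign may be unfavorable. The rescue is the energy bound $\|\nabla u\|^2+|\lambda|\|u\|^2\le C\|g\|_{L^2(\Gamma)}\|\partial u/\partial\textit{\textbf n}\|_{L^2(\Gamma)}$ from Step 1 of Theorem \ref{th31}, which is just strong enough to absorb that contribution. Once this is in hand, the transfer from $\mathcal{C}^{1,1}$ to Lipschitz $\Omega$ is routine thanks to the approximation machinery already developed in Theorem \ref{th2}, and Step 2 reduces to bookkeeping of $|\lambda|$ exponents — in particular, the relation $r<1$ here versus $r'<1/2$ in Theorem \ref{th31} corresponds to the one-factor loss incurred when trading a $g$--estimate for a flux estimate.
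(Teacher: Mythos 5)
Your proposal is correct and follows essentially the same route as the paper: first the energy identity (split into real and imaginary parts as in Theorem \ref{th31}) combined with the Rellich identity \eqref{RellichBi} and Young's inequality to obtain $\Vert \partial u/\partial \textbf{\textit n}\Vert_{L^2(\Gamma)} \leq C(\vert\lambda\vert\Vert g\Vert_{L^2(\Gamma)}+\Vert g\Vert_{H^1(\Gamma)})$, and then the Neumann bootstrap via \eqref{estim1bbisr} and \eqref{estimabdb} with the reparametrization $r'=r/2$. The only cosmetic difference is that you spell out the $\mathcal{C}^{1,1}$-to-Lipschitz approximation explicitly, whereas the paper's proof of this theorem leaves it implicit in the statement that the constants depend only on the Lipschitz character of $\Omega$.
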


\begin{proof}  Clearly we have the following relation:
\begin{equation*}\label{egaliteenergiebbis2}
\lambda^2 \Vert u\Vert^2_{L^2(\Omega)} + \Vert \nabla u\Vert^2_{L^2(\Omega)} = \int_{\Gamma} g\frac{\partial \overline{u}}{\partial \textbf{\textit n}}
\end{equation*}
and the estimate
\begin{equation}\label{inegaliteenergiebbis3}
\vert\lambda\vert \Vert u\Vert^2_{L^2(\Omega)} + \Vert \nabla u\Vert^2_{L^2(\Omega)} \leq C\Vert g\Vert_{L^2(\Gamma)}\left\|\frac{\partial \overline{u}}{\partial \textbf{\textit n}}\right\|_{L^2(\Gamma)}.
\end{equation}
Using Rellich identity \eqref{RellichBi} we have
\begin{equation*}\label{RelavecL2u}
\ds \left\|\frac{\partial u}{\partial \textbf{\textit n}}\right\|_{L^2(\Gamma)}^2 \leq C \left( \ds \int_\Gamma (\vert\lambda\vert^2 \vert g \vert^2  + \left|\nabla_{\mathscr{T}}g\right|^2) + \Vert \nabla u\Vert^2_{L^2(\Omega)} + \vert\lambda\vert^2 \Vert u \Vert^2_{L^2(\Omega)}\right).
\end{equation*}
From this last inequality and \eqref{inegaliteenergiebbis3} we get 
\begin{equation*}\label{RellichDbis}
\left\| \frac{\partial u}{\partial \textbf{\textit n}}\right\|_{L^2(\Gamma)} \leq C(\Omega)\Big(\vert \lambda \vert\Vert g \Vert_{L^2(\Gamma)} + \Vert g\Vert_{H^1(\Gamma)}  \Big),
\end{equation*}
where the constant $C(\Omega)$ depends only on the Lipschitz character of $\Omega$ and $\omega$. Using then the estimates \eqref{estim1bbisr} and \eqref{estimabdb}, we get the inequality  \eqref{estim1bbisrD}. 
\end{proof}

We consider now the following Dirichlet-to-Neumann operator : let $g\in H^1(\Gamma)$ and $u_g\in H^{3/2}(\Omega)$ the unique solution satisfying
$$
 \lambda^2 u_g - \Delta u_g = 0 \quad \mathrm{in}\; \Omega \quad \mathrm{and}\quad u_g = g \quad \mathrm{on}\; \Gamma .
$$
Let us define the operator
$$
S_\lambda : g\mapsto  \frac{\partial u_g}{\partial \textbf{\textit n}}
$$ 
and provide the space $H^1(\Gamma)$ with the following norm equivalent to the usual norm of $H^1(\Gamma)$: for $g\in H^1(\Gamma)$, we set  
$$
\vert \vert\vert g \vert\vert\vert = \vert \lambda \vert \Vert g\Vert_{L^2(\Gamma)} + \Vert g \Vert_{H^1(\Gamma)}.
$$
We denote by $H^1_\lambda(\Gamma)$ the space $H^1(\Gamma)$ equipped with this norm and by $H^{-1}_\lambda(\Gamma)$ its dual space. We know that
\begin{equation*}\label{d07-150318-e1B}
S_\lambda :\  H^1_\lambda (\Gamma)\  \longrightarrow\  L^2(\Gamma)  
\end{equation*}
is continuous and there exists a constant C, depending only on $\Omega$ if $\lambda$ is real and on $\Omega $ and $ \omega$ if $\lambda$  is a complex number with $Re\,  \lambda \ge \omega > 0$, such that
$$
\left\|  \frac{\partial u_g}{\partial \textbf{\textit n}} \right\|_{L^2(\Gamma)} \leq C\vert \vert\vert g \vert\vert\vert .
$$
We see easily that for any $f\in H^1_\lambda (\Gamma)$ and any $g\in L^2(\Gamma)$ we have
$$
\langle  \frac{\partial u_g}{\partial \textbf{\textit n}} , f \rangle_{H^{-1}_\lambda(\Gamma)\times H^{1}_\lambda(\Gamma)} = \langle  g , \frac{\partial u_f}{\partial \textbf{\textit n}}\rangle_{L^2(\Gamma)\times L^2(\Gamma)}.
$$
So the operator $S_\lambda$ is self-similar and thanks to  Theorem \ref{th2} and Theorem \ref{th31D} is continuous from $ L^2(\Gamma)$ into  $H^{-1}_\lambda(\Gamma)$.

Finally, proceeding as in the proof of Theorem \ref{th2Bi}, we can obtain the following theorem.

%Using Theorem \ref{th3TerD},  it is easy, as above, to prove the following:

\begin{theorem}\label{th2Bii} $\mathrm{(}${\bf Very Weak Estimate for} $(\mathcal{P}_D^H) \mathrm{)}$ Let $g\in L^{2}(\Gamma)$ and suppose $\lambda \in\mathbb{R}^* $ or  $\lambda \in\mathbb{C} $ with  $\mathrm{Re}\, \lambda \ge \omega $ for some arbitrary $\omega > 0$. Then Problem $(\mathcal{P}_D^H)$ has a unique solution which satisfies 
\begin{equation*}
u\in H^{1/2}(\Omega),\quad    \frac{\partial u}{\partial\textit{\textbf n}}  \in H^{-1}(\Gamma)\
\end{equation*}
with the following estimate:
\begin{equation*}\label{estim9bis}
     \vert \lambda \vert^{1/2} \Vert u\Vert_{L^2(\Omega)} +  \Vert u\Vert_{H^{1/2}(\Omega)} +  \left\| \frac{\partial u}{\partial\textit{\textbf n}} \right\|_{H^{-1}_\lambda(\Gamma)} \leq C \left\|  g\right\|_{L^2(\Gamma)},
\end{equation*}
where $C$ is not depending on $\lambda$.
\end{theorem}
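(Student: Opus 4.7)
My approach is to mimic the proof of Theorem \ref{th2Bi} by transposition, with Theorem \ref{th3BisD} playing the role of Theorem \ref{th2BisN} in the Neumann case, and with the self-adjoint structure of the Dirichlet-to-Neumann operator $S_\lambda$ developed in the paragraph immediately preceding the theorem furnishing the boundary estimate. As in Step 4 of the proof of Theorem \ref{th2Bi}, I would first work on a $\mathcal{C}^{1,1}$ domain and then lift the hypothesis by smooth approximation, exploiting that all the constants depend only on the Lipschitz character of $\Omega$.

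For the construction of $u$ by duality, let $F\in L^2(\Omega)$ and let $w\in H^{3/2}_0(\Omega)$ be the unique solution of $\lambda^2 w-\Delta w=F$ in $\Omega$, $w=0$ on $\Gamma$, provided by Theorem \ref{th3BisD}. The Green identity of Lemma \ref{traces+Green} motivates the very weak formulation
$$\int_\Omega u\,\overline{F}\;=\;-\int_\Gamma g\,\overline{\frac{\partial w}{\partial\textit{\textbf n}}}\qquad\forall\,F\in L^2(\Omega).$$
Theorem \ref{th3BisD} yields $\vert\lambda\vert^{1/2}\Vert\partial w/\partial\textit{\textbf n}\Vert_{L^2(\Gamma)}\leq C\Vert F\Vert_{L^2(\Omega)}$ in the real case and the analogous bound with factor $\vert\lambda\vert^{(3/2)r-1}$ in the complex case, so the right-hand side is a continuous linear form on $L^2(\Omega)$. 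Riesz representation then produces a unique $u\in L^2(\Omega)$ with $\vert\lambda\vert^{1/2}\Vert u\Vert_{L^2(\Omega)}\leq C\Vert g\Vert_{L^2(\Gamma)}$. Testing against $w\in\mathcal{D}(\Omega)$ gives $\lambda^2 u-\Delta u=0$ in $\Omega$, so $u\in E(\Omega,\Delta)$; Lemma \ref{traces+Green} then furnishes a well-defined trace, and choosing $w\in H^2(\Omega)\cap H^1_0(\Omega)$ in the formulation identifies it with $g$.

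For the interior regularity $u\in H^{1/2}(\Omega)$, I would decompose $u=y_g+v$, where $y_g$ is the harmonic extension of $g$ in $H^{1/2}(\Omega)$ given by Theorem \ref{PDH, lambda=0} i) with $s=1/2$, and $v=u-y_g$ solves $(\mathcal{P}_D^0)$ with forcing $-\lambda^2 y_g$. Interpolating the $L^2$ and $H^1$ bounds of Theorem \ref{th3BisD} gives the Remark \ref{H1demi}-style estimate $\vert\lambda\vert^{3/2}\Vert v\Vert_{H^{1/2}(\Omega)}\leq C\Vert F\Vert_{L^2(\Omega)}$. Exploiting the Poisson reformulation $-\Delta v=\lambda^2 u$, the source in this inequality may be taken with $L^2$-norm bounded by $\vert\lambda\vert^2\Vert u\Vert_{L^2(\Omega)}\leq C\vert\lambda\vert^{3/2}\Vert g\Vert_{L^2(\Gamma)}$ thanks to the transposition bound on $u$; this yields $\Vert v\Vert_{H^{1/2}(\Omega)}\leq C\Vert g\Vert_{L^2(\Gamma)}$, and combining with $\Vert y_g\Vert_{H^{1/2}(\Omega)}\leq C\Vert g\Vert_{L^2(\Gamma)}$ gives the desired interior estimate.

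The bound on $\partial u/\partial\textit{\textbf n}$ in $H^{-1}_\lambda(\Gamma)$ is essentially free: the continuous extension $S_\lambda:L^2(\Gamma)\to H^{-1}_\lambda(\Gamma)$ established via the self-similarity argument preceding the theorem (using Theorem \ref{th31D}) gives $\Vert\partial u/\partial\textit{\textbf n}\Vert_{H^{-1}_\lambda(\Gamma)}=\Vert S_\lambda g\Vert_{H^{-1}_\lambda(\Gamma)}\leq C\Vert g\Vert_{L^2(\Gamma)}$ once $u$ is identified with $u_g$. The main delicate point of the whole proof is the $H^{1/2}$ interior step, where the correct $\lambda$-scaling emerges only from the tight interplay between the transposition $L^2$-bound on $u$ and the appropriate choice of source for the $v$-equation; bypassing either observation costs a factor of $\vert\lambda\vert^{1/2}$, just as in Step 2 of the Neumann proof of Theorem \ref{th2Bi}.
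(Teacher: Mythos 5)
Your proposal is correct and follows essentially the route the paper intends: the paper's own ``proof'' of Theorem \ref{th2Bii} is the single sentence ``proceeding as in the proof of Theorem \ref{th2Bi}'', and your transposition against Theorem \ref{th3BisD}, the decomposition $u=y_g+v$ for the $H^{1/2}$ interior bound, and the appeal to the continuity of $S_\lambda:L^2(\Gamma)\to H^{-1}_\lambda(\Gamma)$ established just before the statement are exactly the Dirichlet analogues of the four steps there. The only caveat, which you inherit from the paper rather than introduce, is that in the complex case this method yields $\vert\lambda\vert^{\frac{3}{2}r-1}\Vert u\Vert_{L^2(\Omega)}\leq C_r\Vert g\Vert_{L^2(\Gamma)}$ for every $r<1$ rather than the exponent $1/2$ with a $\lambda$-independent constant stated uniformly in the theorem.
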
  

\section*{Acknowledgements}
We would like to thank the referee for his (her) valuable comments which enabled us to
improve substantially the paper.

\end{document}